\theoremstyle{ams}
\newtheorem{theorem}{Theorem}[section]
\newtheorem{proposition}[theorem]{Proposition}
\newtheorem{lemma}[theorem]{Lemma}
\theoremstyle{definition}
\newtheorem{question}[theorem]{Question}
\newtheorem{definition}[theorem]{Definition}
\newtheorem{remark}[theorem]{Remark}
\newtheorem{example}[theorem]{Example}
\newcommand{\C}{\mathbb{C}}
\newcommand{\R}{\mathbb{R}}
\newcommand{\Z}{\mathbb{Z}}
\newcommand{\CP}{\mathbb{C}P}
\begin{document}
\title[Hard Lefschetz property of symplectic structures]{Hard Lefschetz property of symplectic structures on compact K\"{a}hler manifolds}

\author[Y. Cho]{Yunhyung Cho}
\address{Departamento de matem\'{a}tica, Centro de An\'{a}lise Matem\'{a}tica, Geometria e Sistemas
Din\^{a}micos-LARSYS, Instituto Superior T\'{e}cnico, Av. Rovisco Pais 1049-001 Lisbon, Portugal}
\address{current address : Center for Geometry and Physics, Institute for Basic Science (IBS), Pohang, Republic of Korea 37673}
\email{yhcho@ibs.re.kr}

\thanks{The author was supported by IBS-R003-D1.}
\keywords{symplectic manifold, Hamiltonian action, hard Lefschetz property, non-K\"{a}hler manifold}
\subjclass[2010]{53D20(primary), and 53D05(secondary)}
\dedicatory{This paper is dedicated to my wife.}

\date{\today}
\maketitle

\begin{abstract}
    In this paper, we give a new method to construct a compact symplectic manifold which does not satisfy the hard Lefschetz property.
    Using our method, we construct a simply connected compact K\"{a}hler manifold $(M,J,\omega)$ and a symplectic form $\sigma$ on $M$ which does not satisfy the hard Lefschetz property, but is symplectically deformation equivalent to the K\"{a}hler form $\omega$.
    As a consequence, we can give an answer to the question posed by Khesin and McDuff as follows.
    According to symplectic Hodge theory, any symplectic form $\omega$ on a smooth manifold $M$ defines \textit{symplectic harmonic forms} on $M$.
    In \cite{Yan}, Khesin and McDuff posed a question whether there exists a path of symplectic forms $\{ \omega_t \}$ such that the dimension $h^k_{hr}(M,\omega)$ of the space of \textit{symplectic harmonic $k$-forms} varies along $t$.
    By \cite{Yan} and \cite{Ma}, the hard Lefschetz property holds for $(M,\omega)$ if and only if $h^k_{hr}(M,\omega)$ is equal to the Betti number $b_k(M)$ for all $k>0$.
    Thus our result gives an answer to the question.
    Also, our construction provides an example of compact K\"{a}hler manifold whose K\"{a}hler cone is properly contained in the symplectic cone (c.f. \cite{Dr}).
\end{abstract}

\section{introduction}

For any compact K\"{a}hler manifold $(M,\omega,J)$ of complex dimension $n$, the \textit{hard Lefschetz theorem} states that
\begin{equation}\label{equation : HLP}
    \begin{array}{ccccc}
    [\omega]^{n-k} & : & H^k(M;\R) & \longrightarrow & H^{2n-k}(M;\R) \\
                   &   &   \alpha  & \mapsto         & \alpha \cup [\omega]^{n-k} \\
    \end{array}
\end{equation}
is an isomorphism for every $k=0,1,\cdots,n$.
Now, let us consider a compact symplectic manifold $(M,\omega)$ of real dimension $2n$. Then it is natural to ask whether the hard Lefschetz theorem holds for $\omega$, but it turned out that the hard Lefschetz theorem does not hold in general.

We say that a symplectic form $\omega$ is of \textit{hard Lefschetz type} if the map $[\omega]^{n-k}$ in (\ref{equation : HLP}) is an isomorphism for every $k = 0,1,\cdots,n$, and we say $\omega$ is of \textit{non-hard Lefschetz type} otherwise.
In this paper, we consider the following.

\begin{question}\label{question : non hard Lefschetz Kahler}
    Let $(M,\omega,J)$ be a compact K\"{a}hler manifold. Then is it possible that $M$ admits a symplectic form $\sigma$ of non-hard Lefschetz type?
\end{question}

The reason why we consider Question \ref{question : non hard Lefschetz Kahler} is as follows. Although there are many examples of compact symplectic manifolds
of non-hard Lefschetz type, not all of them are homotopy equivalent to a K\"{a}hler manifold.
The simplest example of non-hard Lefschetz type is a compact symplectic manifold $(M,\omega)$ such that the $(2k+1)$-th Betti number $b_{2k+1}(M)$ is odd for some $k \in \Z_{\geq 0}$.
Such a manifold does not admit a K\"{a}hler structure by Hodge symmetry.
Gompf also constructed a family of compact symplectic manifolds of non-hard Lefschetz type as follows.

\begin{theorem}\cite[Theorem 7.1]{Gom}\label{theorem : Gompf}
    For any finitely presentable group $G$ and any integers $n \geq 3$ and $b \geq 0$, there exists a $2n$-dimensional compact manifold $M$ such that
    \begin{itemize}
        \item $M$ admits a symplectic structure,
        \item $\pi_1(M) \cong G$,
        \item $b_i(M) \geq b$ for $2 \leq i \leq n-2$, and
        \item $M$ does not admit any symplectic form of hard Lefschetz type.
    \end{itemize}
\end{theorem}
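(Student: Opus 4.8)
The plan is to reproduce Gompf's original strategy, whose two engines are: (i) his earlier theorem (in the same paper) that every finitely presentable group is the fundamental group of a closed symplectic $4$-manifold, and (ii) the symplectic fibre sum $\#_V$ along codimension-$2$ symplectic submanifolds, together with the elementary facts that a product of closed symplectic manifolds is symplectic and that a blow-up along a symplectic submanifold is symplectic. I would build $M$ from three kinds of blocks: a symplectic $4$-manifold $X$ carrying the group $G$; simply connected symplectic blocks that inflate the Betti numbers $b_2,\dots,b_{n-2}$ past $b$; and, when needed, a simply connected symplectic block $Z$ which is itself of non-hard Lefschetz type and transmits this property to $M$. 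Every gluing is performed along a simply connected codimension-$2$ symplectic submanifold $V$ with trivial normal bundle, since that is exactly the situation in which van Kampen leaves $\pi_1$ undisturbed: if in addition $Y\setminus V$ is simply connected, then $\pi_1(X\#_V Y)\cong\pi_1(X\setminus V)/\langle\!\langle\mu\rangle\!\rangle\cong\pi_1(X)$, where $\mu$ denotes the meridian of $V$.

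Concretely, one first fixes by (i) a closed symplectic $4$-manifold $X$ with $\pi_1(X)\cong G$; Gompf's construction also exhibits a symplectic torus $T\subset X$ of self-intersection $0$ whose complement has the same fundamental group as $X$, and fibre-summing $X$ along $T$ with copies of the rational elliptic surface (whose elliptic fibre has simply connected complement) raises $b_2(X)$ as much as one likes without changing $\pi_1$. One then sets $M_1=X\times W$, where $W$ is a closed simply connected symplectic $(2n-4)$-manifold chosen so that $b_i(W)\ge b$ for $2\le i\le n-2$; such a $W$ can be built by iterating symplectic fibre sums of a fixed simply connected symplectic $(2n-4)$-manifold along simply connected codimension-$2$ symplectic loci. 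Then $M_1$ is a closed symplectic $2n$-manifold with $\pi_1(M_1)\cong G$ and, by K\"unneth, $b_i(M_1)\ge b_i(W)\ge b$ in the required range, while $b_1(M_1)=b_1(X)$ is the first Betti number of $G$, hence fixed. If it is odd we are done already: the skew pairing $\alpha,\beta\mapsto\int_{M_1}\alpha\cup\beta\cup[\sigma]^{\,n-1}$ on the odd-dimensional space $H^1(M_1;\R)$ is degenerate for every symplectic form $\sigma$, so no symplectic form on $M_1$ (nor on any $\pi_1$-preserving modification of it) is of hard Lefschetz type.

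The substantive case is when $b_1(G)$ is even, and this is where the block $Z$ is used. One takes a closed simply connected symplectic $2n$-manifold $Z$ of non-hard Lefschetz type that contains a simply connected codimension-$2$ symplectic submanifold $V$ with trivial normal bundle and simply connected complement --- for $2n\ge8$ one can take $Z$ with some odd Betti number $b_j(Z)$ ($j$ odd, $j\le n$) equal to an odd integer, while for $2n=6$ one takes $Z$ whose cup-product map $H^2(Z;\R)\to H^4(Z;\R)$ fails to be injective. After realizing a copy of the same $V$ inside $M_1$ with opposite normal data and with complement still of fundamental group $G$, set $M=M_1\#_V Z$. Then $\pi_1(M)\cong G$ by the van Kampen statement above, $\dim M=2n$, and --- inflating Betti numbers by further $\pi_1$-preserving fibre sums with simply connected high-Betti-number blocks, performed before or after this step --- one keeps $b_i(M)\ge b$ for $2\le i\le n-2$. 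A Mayer--Vietoris computation for $M=(M_1\setminus\nu V)\cup_{V\times S^1}(Z\setminus\nu V)$ then shows that the odd-degree cohomology of $Z$ responsible for its failure of hard Lefschetz survives to a class $c\in H^{j}(M;\R)$ with $j$ odd, $j\le n$, whose cup product with the appropriate power of \emph{every} symplectic class of $M$ vanishes in top degree. Consequently the skew-symmetric pairing $\alpha,\beta\mapsto\int_M\alpha\cup\beta\cup[\sigma]^{\,n-j}$ on $H^{j}(M;\R)$ is degenerate for every symplectic form $\sigma$ on $M$, so $M$ admits no symplectic form of hard Lefschetz type --- recall that a hard Lefschetz symplectic manifold carries a nondegenerate such pairing in every odd degree, in particular all of its odd Betti numbers are even.

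The main obstacle lies entirely in the coordination inside this last step. One must genuinely produce a simply connected symplectic $2n$-manifold of non-hard Lefschetz type together with a simply connected codimension-$2$ symplectic hypersurface having simply connected complement; one must embed a matching hypersurface in the group-carrying block without disturbing $\pi_1(M)\cong G$; and one must carry the Mayer--Vietoris bookkeeping far enough to be sure that the cohomological obstruction to hard Lefschetz is not repaired by the new classes (rim tori and the like) produced by the fibre sum, and likewise survives the Betti-number inflation. Once these compatibility issues are settled, the fundamental-group, dimension and Betti-number computations are routine applications of van Kampen, K\"unneth and Mayer--Vietoris, and the non-hard Lefschetz conclusion reduces to the linear-algebra remark above.
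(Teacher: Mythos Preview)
The paper does not contain a proof of this statement: Theorem~\ref{theorem : Gompf} is quoted verbatim from Gompf's paper \cite{Gom} as background and motivation, with no argument given here. So there is no ``paper's own proof'' against which to compare your proposal.

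As for the proposal itself, it is a fair high-level outline of Gompf's strategy --- realize $G$ on a symplectic $4$-manifold, raise dimension by products, inflate Betti numbers and destroy hard Lefschetz by fibre sums along simply connected codimension-$2$ symplectic submanifolds --- and you are honest that the real content lies in the ``coordination'' you flag at the end. But as written it is a sketch, not a proof: you do not actually produce the simply connected non-hard-Lefschetz block $Z$ with its requisite hypersurface (you only assert one exists, separately in dimensions $6$ and $\ge 8$), you do not verify that a matching hypersurface sits inside $M_1$ with the needed $\pi_1$ control, and the Mayer--Vietoris claim that the obstruction class $c$ survives the fibre sum and annihilates \emph{every} symplectic class is asserted rather than checked. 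These are exactly the places where Gompf's argument does nontrivial work, so if you intend this as a self-contained proof you would need to fill them in; if you intend it only as a pointer to \cite{Gom}, then it matches what the present paper does, namely cite the result without proof.
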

In particular, the last condition in Theorem \ref{theorem : Gompf} implies that $M$ in Theorem \ref{theorem : Gompf} is not homotopy equivalent to any K\"{a}hler manifold. 
Fern\'{a}ndez and Mu\~{n}oz \cite[Remark 3.3]{FM} constructed a simply connected non-formal symplectic manifold of non-hard Lefschetz type. Note that the non-formality implies that their example is not homotopy equivalent to any K\"{a}hler manifold. For the case of nilmanifolds, Benson and Gordon \cite[Theorem A]{BG} proved that a compact symplectic nilmanifold satisfies the hard Lefschetz property if and only if it is isomorphic to a torus. Also, Hasegawa \cite{H} proved that a compact symplectic nilmanifold is formal if and only if it is isomorphic to a torus. Consequently, a compact symplectic nilmanifold is of hard Lefschetz type if and only if it is K\"{a}hler, i.e., every compact nilmanifold of non-hard Lefschetz type is not homotopy equivalent to any K\"{a}hler manifold. In this point of view, we give the first example of compact symplectic manifold of non-hard Lefschetz type which is homotopy equivalent to some K\"{a}hler manifold, thereby giving an answer to Question \ref{question : non hard Lefschetz Kahler} as follows.
\begin{theorem}\label{theorem : main}
    There exists a compact K\"{a}hler manifold $(X,\omega,J)$ with $\dim_{\C} X = 3$ such that
    \begin{enumerate}
        \item $X$ is simply connected,
        \item $H^{2k+1}(X) = 0$ for every integer $k \geq 0$,
        \item $X$ admits a symplectic form $\sigma \in \Omega^2(X)$ of non-hard Lefschetz type, and
        \item $\sigma$ is deformation equivalent to the K\"{a}hler form $\omega$.
    \end{enumerate}
\end{theorem}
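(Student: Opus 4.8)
The plan is to split Theorem \ref{theorem : main} into a purely cohomological part and a symplectic-cone part. Since the hard Lefschetz property of a symplectic form depends only on its de Rham class, and since the $3$-fold $X$ we build will satisfy $H^{2k+1}(X)=0$, the only possibly nontrivial instance of (\ref{equation : HLP}) for a class $a\in H^2(X;\R)$ is the degree $k=2$ case, namely the cup product $a\cup-\colon H^2(X;\R)\to H^4(X;\R)$: the $k=0$ case reduces to $\int_X a^3\neq 0$, which holds for any symplectic class, and the cases $k=1,3$ involve only the vanishing groups $H^1,H^3,H^5$. As $b_2(X)=b_4(X)$ by Poincar\'e duality, this map is an isomorphism if and only if a fixed homogeneous polynomial $D$ of degree $b_2(X)$ on $H^2(X;\R)$ — the determinant of $a\cup-$ in chosen bases — is nonzero at $a$. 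Hence it suffices to produce (i) a simply connected compact K\"ahler $3$-fold $X$ with vanishing odd cohomology carrying a suitable Hamiltonian torus action, (ii) a class $a_0$ in the symplectic cone $\mathcal C_X$ with $D(a_0)=0$, and (iii) a path inside $\mathcal C_X$ from $a_0$ to the K\"ahler class $[\omega]$. By the hard Lefschetz theorem for K\"ahler manifolds $D$ is nonzero on the whole K\"ahler cone, so (ii)--(iii) automatically exhibit $\mathcal C_X$ as strictly larger than the K\"ahler cone, matching the assertion about \cite{Dr} in the abstract.

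For (i) I would take $X$ to be a projective bundle over a simply connected K\"ahler surface with no odd cohomology (possibly modified by a blow-up along a suitable K\"ahler submanifold), so that $\pi_1(X)=1$ and $H^{\mathrm{odd}}(X)=0$ follow from the Leray--Hirsch theorem together with the blow-up formula, and the cohomology ring — hence the polynomial $D$ — is completely explicit. The bundle data, and the centre of the blow-up, are to be chosen precisely so that the hypersurface $\{D=0\}\subset H^2(X;\R)$ is cut out by a low-degree factor which, by an elementary computation with the structure constants of $H^*(X)$, one sees to pass through classes $a_0$ with $\int_X a_0^{\,3}>0$; i.e. through classes satisfying the obvious volume obstruction to being symplectic. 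The Hamiltonian torus action is inherited from the fibrewise action on the projective bundle, the blow-up being performed equivariantly.

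The heart of the argument, and step (iii), is to upgrade such a candidate $a_0$ to an honest point of $\mathcal C_X$ and to connect it to $[\omega]$ through symplectic forms. Here I would use a generic circle $S^1$ in the torus, with moment map $\mu\colon X\to\R$: the reduced spaces $X_t=\mu^{-1}(t)/S^1$ are symplectic surfaces, the Duistermaat--Heckman function $\DHfunction(t)$ is piecewise polynomial with jumps governed by the Euler classes at the fixed components of $\mu$, and $(X,\omega)$ is reconstructed from the family $\{(X_t,\omega_t)\}$ together with this gluing data. Replacing the family by a reshaped one $\{(X_t,\widetilde\omega_t)\}$ — changing the symplectic areas and forms on the reduced surfaces while preserving the compatibility (jump) conditions at the critical values of $\mu$ — produces a smooth one-parameter family of symplectic forms $\sigma_s$ on $X$ with $\sigma_0=\omega$, hence a path $s\mapsto[\sigma_s]$ in $\mathcal C_X$. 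Arranging the reshaping so that this path crosses $\{D=0\}$ yields a symplectic form $\sigma=\sigma_1$ of non-hard Lefschetz type which is, by construction, deformation equivalent to $\omega$; this gives conclusions (3) and (4), while (1) and (2) are built into $X$.

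The main obstacle is exactly this last step. The jump conditions at the fixed components of $\mu$ — that across a critical value the classes $[\widetilde\omega_t]$ must differ by the prescribed integral Euler-class correction — sharply restrict which reshapings of the Duistermaat--Heckman data are realised by a global symplectic form, and one must verify that within these constraints the resulting path of classes genuinely leaves the K\"ahler cone, reaches the degeneracy locus $\{D=0\}$, and stays symplectic throughout. Once the bundle and blow-up numerics in (i) are chosen to make $\{D=0\}$ accessible, this reduces to a finite, if delicate, bookkeeping of linear constraints on the reshaped areas; the remaining verifications (simple-connectivity, vanishing of odd cohomology, Kählerness of $(X,\omega)$) are immediate from the explicit model of $X$.
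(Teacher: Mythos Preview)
Your outline is in the right neighbourhood --- projective bundle over a simply connected K\"ahler surface, fibrewise $S^1$-action, deform the reduced data until the cup-product map on $H^2$ becomes singular --- and this is essentially the paper's strategy as well. But what you have written is a plan, not a proof: you explicitly flag the decisive step (showing that a reshaping of the Duistermaat--Heckman data drives $[\sigma_s]$ to the locus $\{D=0\}$ while remaining symplectic) as ``the main obstacle'' and then defer it to unspecified ``delicate bookkeeping''. You never name a base surface, never compute $D$, and never verify that $\{D=0\}$ meets the symplectic cone. The suggestions of a blow-up and of ``jump conditions at the critical values'' are red herrings: the construction that works has only two fixed components (min and max), so there are no interior jumps, and no blow-up is needed.

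The paper closes this gap with one concrete choice and one clean criterion. For the criterion (Proposition~\ref{proposition : if and only if}), it shows via equivariant cohomology and ABBV localization that for a six-dimensional simple Hamiltonian $S^1$-manifold whose extrema are $4$-dimensional, hard Lefschetz fails precisely when $[\omega_0]\cdot[\omega_1]=0$ in $H^4(Z_{\min})$ (or when $\omega_0+\omega_1$ fails HLP, which is vacuous once $b_1(N)=0$). For the choice, it takes $N$ a projective K3 surface and uses its holomorphic symplectic form $\rho$: setting $\sigma=\rho+\bar\rho$ gives a genuine symplectic form with $\sigma\wedge\phi\equiv 0$ \emph{pointwise} (Hodge-type reasons), so the family $\sigma_t=\sigma-t\phi$ is symplectic for all $t\in[-1,1]$ and, after scaling, $[\sigma_{-1}]\cdot[\sigma_1]=[\sigma]^2-[\phi]^2=0$. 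Then $X=\mathbb{P}(\xi\oplus\underline{\C})$ with $c_1(\xi)=[\phi]$ does the job, and the deformation to a K\"ahler form is handled by Proposition~\ref{proposition : sphere bundle} and Lemma~\ref{lemma : symplectic deformation}. The hyperk\"ahler trick is exactly the missing idea that converts your ``bookkeeping'' into a one-line verification.
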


There are three immediate applications of Theorem \ref{theorem : main}. Firstly, the condition (4) in Theorem ~\ref{theorem : main} implies that the hard Lefschetz property is not an invariant property under symplectic deformations. Secondly, it provides an example of a compact manifold whose K\"{a}hler cone is non-empty and properly contained in the symplectic cone. For a given manifold $X$, the \textit{K\"{a}hler cone} $\mathcal{K}(X)$ is a subset of $H^2(X;\R)$ such that each element of $\mathcal{K}(X)$ can be represented by some K\"{a}hler form on $X$. Similarly, the \textit{symplectic cone} $\mathcal{S}(X)$ is defined as a subset of $H^2(X;\R)$ such that each element of $\mathcal{S}(X)$ can be represented by some symplectic form on $X$. Such examples ($\emptyset \neq \mathcal{K}(X) \subsetneq \mathcal{S}(X)$) were studied by Dr\u{a}ghici \cite{Dr} and Li-Usher \cite{LU} in four dimensional case.
Thirdly, Theorem \ref{theorem : main} gives an answer to a question of Khesin and McDuff in the simply connected case as follows.

Let $(M,\omega)$ be a $2n$-dimensional compact symplectic manifold and we denote by $\Omega^k(M)$ the set of all $k$-forms on $M$.
Then we can define the \textit{symplectic Hodge star operator $*_{\omega} : \Omega^k(M) \rightarrow \Omega^{2n-k}(M)$ with respect to $\omega$} satisfying
$$\alpha \wedge *_{\omega} \beta = \omega(\alpha,\beta)\frac{\omega^n}{n!}$$
for every $\alpha,\beta \in \Omega^k(M)$. Here we regard $\omega$ as the extension of the symplectic form on $M$ to $\Omega^k(M)$ after identifying $TM$ with $T^*M$ via
\begin{displaymath}
    \begin{array}{ccc}
        TM & \rightarrow & T^*M \\
        v & \mapsto & \omega(v,\cdot).\\
    \end{array}
\end{displaymath}
We say that $\alpha \in \Omega^k(M)$ is \textit{symplectic harmonic} if $d\alpha = \delta\alpha = 0$ where $\delta\alpha = (-1)^k*_{\omega}d*_{\omega}\alpha$.
Let $H^*_{hr}(M,\omega)$ be the set of all cohomology classes which can be represented by \textit{symplectic harmonic forms} with respect to $\omega$ and we denote $$h_k(M,\omega) := \dim H^k_{hr}(M,\omega).$$
Brylinski \cite{Bry} proved that if $\omega$ is K\"{a}hler, then every cohomology class has a symplectic harmonic representative so that $h_k(M,\omega) = b_k(M)$ for every $k \geq 0$.
Also he conjectured that his theorem can be extended to any compact symplectic manifold, but Mathieu \cite{Ma} and Yan \cite{Yan} disproved Brylinski's conjecture independently.
\begin{theorem}\cite{Ma}\cite{Yan}
    Let $(M,\omega)$ be a compact symplectic manifold. Then $h_k(M,\omega) = b_k(M)$ for every $k \in \Z_{\geq 0}$if and only if $\omega$ is of hard Lefschetz type.
\end{theorem}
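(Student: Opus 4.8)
The plan is to run the whole argument through the $\mathfrak{sl}_2$-action that $\omega$ induces on differential forms, together with Brylinski's description of the operator $\delta$. On $\Omega^{*}(M)$ one has the Lefschetz operator $L=\omega\wedge(\cdot)$, its pointwise transpose $\Lambda$, and the grading operator $H=\sum_{k}(n-k)\Pi_{k}$ (with $\Pi_{k}$ the projection onto $\Omega^{k}$), and these satisfy the $\mathfrak{sl}_2$-relations; hence every form has a Lefschetz decomposition $\alpha=\sum_{j\ge 0}L^{j}\alpha_{j}$ with $\alpha_{j}$ primitive of degree $\deg\alpha-2j$, a degree-$m$ form $\beta$ being primitive iff $\Lambda\beta=0$ iff $L^{n-m+1}\beta=0$. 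One checks (Brylinski, \cite{Bry}) that $\delta$ coincides, up to sign, with the bracket $[\Lambda,d]$, which I will write $d^{\Lambda}$; the symplectic analogues of the K\"{a}hler identities then give $[L,d^{\Lambda}]=d$, $dd^{\Lambda}+d^{\Lambda}d=0$ and $[\Lambda,d^{\Lambda}]=0$. Thus $\alpha$ is symplectic harmonic exactly when $d\alpha=0=d^{\Lambda}\alpha$, the symplectic star $*_{\omega}$ preserves symplectic harmonicity and interchanges $d$ and $\pm d^{\Lambda}$, and on closed forms $d^{\Lambda}\alpha=-d\Lambda\alpha$. Finally $H^{k}_{hr}(M,\omega)$ is a linear subspace of $H^{k}(M;\R)$, so $h_{k}\le b_{k}$ automatically and the theorem concerns when equality holds.

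The next step will be two lemmas about forms. \emph{Lemma A}: a closed form $\alpha=\sum_{j}L^{j}\alpha_{j}$ is symplectic harmonic iff every $\alpha_{j}$ is closed. Since $d$ maps $L^{j}P^{m}$ into $L^{j}P^{m+1}\oplus L^{j+1}P^{m-1}$, write $d\alpha_{j}=d_{+}\alpha_{j}+L\,d_{-}\alpha_{j}$ with both summands primitive; closedness of $\alpha$ forces $d_{+}\alpha_{j}+d_{-}\alpha_{j-1}=0$ for all $j$, and from $d^{\Lambda}\alpha=-d\Lambda\alpha$ together with $\Lambda L^{j}\alpha_{j}=c_{j}L^{j-1}\alpha_{j}$ (with $c_{j}$ pairwise distinct for $\deg\alpha\le n$) one reads off that the primitive components of $d^{\Lambda}\alpha$ are nonzero multiples of the $d_{-}\alpha_{j}$; so $d^{\Lambda}\alpha=0$ forces all $d_{-}\alpha_{j}=0$ and hence, via the recursion, all $d_{+}\alpha_{j}=0$. \emph{Lemma B}: for $k\le n$, $L^{n-k}\colon\Omega^{k}(M)\to\Omega^{2n-k}(M)$ is a linear isomorphism, and $\alpha$ is symplectic harmonic iff $L^{n-k}\alpha$ is. Bijectivity is pointwise $\mathfrak{sl}_2$-representation theory; one direction of the equivalence is immediate from $[L,d^{\Lambda}]=d$, and for the converse, if $L^{n-k}\alpha$ is symplectic harmonic then $L^{n-k}d\alpha=0$ puts $d\alpha$ in $\ker(L^{n-k}|_{\Omega^{k+1}})=P^{k+1}$, while $0=d^{\Lambda}L^{n-k}\alpha=L^{n-k}d^{\Lambda}\alpha-(n-k)L^{n-k-1}d\alpha$, together with injectivity of $L^{n-k-1}$ on $\Omega^{k+1}$ and directness of $\Omega^{k+1}=P^{k+1}\oplus L\Omega^{k-1}$, forces $d\alpha=0$, and then $d^{\Lambda}\alpha=0$.

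The implication ``$h_{k}=b_{k}$ for all $k$ $\Rightarrow$ $\omega$ of hard Lefschetz type'' should follow quickly. Fix $k\le n$; since $h_{2n-k}=b_{2n-k}$, an arbitrary class $c\in H^{2n-k}(M)$ has a symplectic harmonic representative $\beta$, and by Lemma B the form $\alpha:=(L^{n-k})^{-1}\beta$ is again symplectic harmonic, so $c=[\beta]=L^{n-k}[\alpha]$. Thus $L^{n-k}\colon H^{k}(M)\to H^{2n-k}(M)$ is surjective, hence an isomorphism by Poincar\'{e} duality ($b_{k}=b_{2n-k}$); for $k\ge n$ this follows as well (or by duality), so $\omega$ is of hard Lefschetz type.

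The reverse implication is where the real work lies. Under the hard Lefschetz property the Lefschetz decomposition descends to cohomology, $H^{k}(M)=\bigoplus_{j}L^{j}P^{k-2j}(M)$ with $P^{m}(M)=\ker\bigl(L^{n-m+1}\colon H^{m}\to H^{2n-m+2}\bigr)$; combined with Lemma A, it suffices to show that every primitive cohomology class has a \emph{primitive} closed representative, for then each class acquires a closed representative all of whose primitive components are closed, i.e.\ a symplectic harmonic one, giving $h_{k}=b_{k}$. Producing such representatives is equivalent to the symplectic $dd^{\Lambda}$-lemma --- that a form which is closed and $d^{\Lambda}$-exact (or $d$-exact and $d^{\Lambda}$-closed) is $dd^{\Lambda}$-exact --- which one deduces from the hard Lefschetz property by an induction over the Lefschetz decomposition of forms, as in Mathieu \cite{Ma}. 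I expect the proof of this $dd^{\Lambda}$-lemma, and the bookkeeping needed to extend Lemma A to degrees above $n$ (where the $c_{j}$ can collide, so that one transports the statement across $*_{\omega}$ from complementary degree instead), to be the main obstacles; everything else is formal manipulation with the $\mathfrak{sl}_2$-triple and Poincar\'{e} duality.
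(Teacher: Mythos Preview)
The paper does not give its own proof of this statement: it is quoted in the introduction as a result of Mathieu \cite{Ma} and Yan \cite{Yan}, with no argument supplied. So there is nothing in the paper to compare your proposal against.

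That said, your sketch is essentially the argument of Yan \cite{Yan} (with Mathieu's representation-theoretic viewpoint in the background): the $\mathfrak{sl}_2$-triple $(L,\Lambda,H)$, Brylinski's identity $\delta=\pm[\Lambda,d]$, the characterisation of symplectic harmonic forms via closedness of their primitive components (your Lemma~A), and the form-level isomorphism $L^{n-k}$ (your Lemma~B) are exactly the ingredients Yan uses. The direction ``$h_k=b_k$ for all $k$ implies hard Lefschetz'' is argued precisely as you do. For the converse, Yan does not invoke a $dd^{\Lambda}$-lemma by name; he gives a direct inductive argument showing that, under hard Lefschetz, a primitive cohomology class admits a \emph{closed primitive} representative (one takes any closed representative, uses $L^{n-k+1}[\alpha]=0$ to write $L^{n-k+1}\alpha=d\gamma$, and peels off the non-primitive pieces by subtracting suitable exact forms). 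Your appeal to the $dd^{\Lambda}$-lemma is a legitimate and now-standard reformulation (due to Merkulov, Guillemin, Cavalcanti), but be aware that the equivalence ``hard Lefschetz $\Leftrightarrow$ $dd^{\Lambda}$-lemma'' is itself proved via essentially the same induction, so citing it does not really shortcut the work you flag as the ``main obstacle''. Your remark about degrees above $n$ is well taken: Yan handles this exactly as you suggest, by transporting across $*_{\omega}$.
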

As in \cite[Section 4]{Yan}, Khesin and McDuff posed the question on the existence of continuous family of symplectic forms $\{\omega_t\}$ on a closed manifold $M$ such that $h_k(M,\omega_t) = \dim H^k_{hr}(M,\omega_t)$ varies with respect to $t$. As in \cite{Yan} and \cite{IRTU}, there are some examples of symplectic manifolds such that $h_k(M,\omega_t)$ varies along $t$, but any of their examples is neither simply connected nor homotopy equivalent to any K\"{a}hler manifold. Hence Theorem \ref{theorem : main} provides the first simply connected example with varying $h_t(M,\omega_t)$ such that $M$ is homotopy equivalent to some K\"{a}hler manifold.

In fact, the homeomorphism type of our manifold given in Theorem \ref{theorem : main} is very simple. It is a two-sphere bundle over some four manifold with $b_2^+ \geq 2$ (K3-surface, for example). We sketch the construction as follows.
Let us consider a compact symplectic four manifold $(N,\sigma)$ and an integral cohomology class $e \in H^2(N;\Z)$. Then there is a complex line bundle $\xi$ over $N$ such that $c_1(\xi) = e$. For the associate principal $S^1$-bundle $\mathbb{S}(\xi)$, let $M(N,e) = \mathbb{S}(\xi) \times [-\epsilon,+\epsilon]$ with a symplectic form $\omega_{\sigma} = \pi^*\sigma + d(t\cdot \theta)$ where $\pi : \mathbb{S}(\xi) \rightarrow N$ is the quotient map by the $S^1$-action, $t$ is the parameter for $[-\epsilon,+\epsilon]$, and $\theta$ is any connection 1-form on $\mathbb{S}(\xi)$. Then the induced action on $M(N,e)$ is free and Hamiltonian with a moment map
\begin{displaymath}
\begin{array}{cccc}
          H : & M(N,e) & \longrightarrow & [-\epsilon, +\epsilon] \subset \R\\ 
              &   (z, t)&   \mapsto & t \\
\end{array}
\end{displaymath}
whose maximum and minimum are diffeomorphic to $\mathbb{S}(\xi)$. If we apply the symplectic cut method \cite{Ler} to $M(N,e)$ along the extremum, then the induced space, which is denoted by $\widetilde{M}(N,e)$, is compact without boundary and admits the reduced symplectic form $\widetilde{\omega}_{\sigma}$. The space $(\widetilde{M}(N,e), \widetilde{\omega}_{\sigma})$ (obtained by a symplectic quotient $M(N,e) \times \C^2 /\!\!/ T^2$, see Section 2) is our main object. In Section 2, we will prove the following.

\begin{proposition}\label{proposition : sphere bundle}
    $\widetilde{M}(N,e)$ is diffeomorphic to $\mathbb{P}(\xi \oplus \underline{\C})$ where $\underline{\C}$ is the trivial line bundle over $N$.
    In particular, if $(N,\sigma,J)$ is a compact K\"{a}hler manifold and $e \in H^2(N;\R)$ is of $(1,1)$-type, then there exists another K\"{a}hler form $\sigma'$ on $N$ such that
    \begin{itemize}
        \item $\widetilde{\omega}_{\sigma'}$ is a K\"{a}hler form on $\widetilde{M}(N,e)$, and
        \item $\widetilde{\omega}_{\sigma'}$ is deformation equivalent to $\widetilde{\omega}_{\sigma}$.
    \end{itemize}
\end{proposition}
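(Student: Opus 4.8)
The plan is to establish the diffeomorphism first and then bootstrap it to the K\"ahler statement. For the diffeomorphism, I would use that the residual circle action on $\widetilde M(N,e)$ produced by the cut is Hamiltonian with moment map $\widetilde H\colon\widetilde M(N,e)\to[-\epsilon,\epsilon]$ descended from $H$, whose fixed locus is precisely the two extremal levels $\widetilde H^{-1}(\pm\epsilon)$. Since the $S^1$-action on $M(N,e)=\mathbb{S}(\xi)\times[-\epsilon,\epsilon]$ is free, cutting collapses the free boundary orbits, so each extremal level is the quotient $\mathbb{S}(\xi)/S^1=N$. As $\widetilde H$ has no interior critical points, the equivariant normal form near a fixed copy of $N$ identifies $\widetilde H^{-1}([0,\epsilon])$ with the unit disk bundle of the normal line bundle of that $N$, and a short computation with the clutching data — or with the local model of the cut in which $S^1$ acts on $\C$ by rotation — shows these two normal line bundles are $\xi$ and its conjugate $\bar\xi\cong\xi^{-1}$. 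Thus $\widetilde M(N,e)$ is $D(\xi)$ glued to $D(\bar\xi)$ along their common sphere bundle $\mathbb{S}(\xi)$ by fiberwise inversion. I would then recall that $\p(\xi\oplus\CC)$ admits exactly the same description: deleting its zero- and infinity-sections yields the total spaces of $\xi$ and $\xi^{-1}$, reglued by fiberwise inversion along $\mathbb{S}(\xi)$. (Equivalently, the GIT presentation $M(N,e)\times\C^2/\!\!/T^2$ displays $\widetilde M(N,e)$ directly as the $\CP^1$-bundle $\p(\xi\oplus\CC)\to N$.)

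Now suppose $(N,\sigma,J)$ is K\"ahler and $e\in H^2(N;\R)$ is integral of type $(1,1)$. By the Lefschetz $(1,1)$-theorem there is a holomorphic line bundle $L\to N$ with $c_1(L)=e$; smoothly $\p(L\oplus\mathcal{O}_N)\cong\p(\xi\oplus\CC)\cong\widetilde M(N,e)$, and it carries a holomorphic $\CP^1$-bundle projection $p$ together with the fiberwise rotation $S^1$-action. The key observation is that the cut construction interacts well with scaling: replacing $\sigma$ by $\lambda\sigma$ and substituting $t\mapsto\lambda t$ shows that $\widetilde\omega_{\lambda\sigma}$ is $\lambda$ times the cut form of $(N,\sigma,e)$ taken over the tiny interval $[-\epsilon/\lambda,\epsilon/\lambda]$, so it suffices to show the cut form is K\"ahler when the interval is small. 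For this I would fix a Hermitian metric on $L$ with curvature $\beta\in e$, note that $\sigma-t\beta$ stays K\"ahler on $N$ for all $t$ in a small interval, and identify $(M(N,e),\omega_\sigma)$ with a thin tube $\{r_0\le|v|_h\le r_1\}$ in $\mathrm{Tot}(L)$ carrying a Calabi-type K\"ahler form $p^*\sigma+i\partial\bar\partial u(|v|_h^2)$, the residual $S^1$ being the fiber rotation with moment map the radial coordinate matched to $t$. Filling in the inner boundary of the tube adds the zero section of $L$; filling in the outer boundary, after the cut, passes to $\p(L\oplus\mathcal{O}_N)$; in both cases the Calabi form extends. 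This realizes $\widetilde\omega_{\sigma'}$, for $\sigma':=\lambda\sigma$ with $\lambda$ large, as a genuine K\"ahler form, which is the first bullet.

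For the second bullet, observe that every value of the input data yields a symplectic (not merely closed) cut form, and the construction is smooth in that data on the fixed underlying manifold $\widetilde M(N,e)$; hence the path $\sigma_\tau=((1-\tau)+\tau\lambda)\sigma$, which is K\"ahler on $N$ for all $\tau\in[0,1]$, gives a smooth path of symplectic forms $\widetilde\omega_{\sigma_\tau}$ from $\widetilde\omega_\sigma$ to $\widetilde\omega_{\sigma'}$, establishing the deformation equivalence. I expect essentially all the real work to sit in the middle step — matching the abstract reduced form $\widetilde\omega_{\sigma'}$ with an explicit Calabi K\"ahler form, in particular choosing the profile function $u$ so that the two fillings are smooth and running the attendant Moser argument — whereas the diffeomorphism and the deformation equivalence are formal consequences of the equivariant normal form near the cut loci and the naturality of the cut.
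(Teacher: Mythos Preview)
Your outline is sound, but it diverges from the paper's argument in both halves. For the diffeomorphism, the paper does not go through the Morse-theoretic picture of gluing two disk bundles along their common sphere bundle; instead it works directly with the symplectic-quotient presentation $\mu^{-1}(-\epsilon,\epsilon)/T_2$, writes down an explicit $T_2$-equivariant diffeomorphism $\mu^{-1}(-\epsilon,\epsilon)\cong P\times S^3$, and then checks by a change of variables in the torus that the resulting $T_2$-quotient of $P\times S^3$ is literally $\mathbb{P}(\xi\oplus\underline{\C})$. Your gluing argument is correct and conceptually appealing, but the paper's explicit identification is shorter and avoids having to verify that the normal bundles are $\xi$ and $\xi^{-1}$ and that the clutching matches.

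For the K\"ahler statement the difference is more substantial. You propose to take $\sigma'=\lambda\sigma$ and build an explicit Calabi-ansatz K\"ahler form on $\mathbb{P}(L\oplus\mathcal{O}_N)$, then match it to $\widetilde\omega_{\sigma'}$ by a Moser argument; as you note, the matching is where the work hides. The paper instead runs the logic backwards: it first constructs a K\"ahler form $\Omega=C\cdot\mathrm{pr}^*\sigma-\sqrt{-1}\Theta_E$ on $\mathbb{P}(E)$ from the Chern curvature of the tautological bundle, observes that the residual $S^1$-action is holomorphic Hamiltonian, and then \emph{defines} $\sigma'$ to be the reduced K\"ahler form at a regular level near one extremum. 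The equivariant coisotropic embedding theorem then identifies a tube in $(\mathbb{P}(E),\Omega)$ with $(M(N,e,\epsilon),\omega_{\sigma'})$, so $\omega_{\sigma'}$ is K\"ahler on the nose and $\widetilde\omega_{\sigma'}$ is a genuine K\"ahler quotient; deformation equivalence to $\widetilde\omega_\sigma$ then follows from a separate lemma lifting deformations on $N$ to the cut space. The upshot is that the paper's $\sigma'$ is not $\lambda\sigma$ but an abstractly obtained reduced form (only known to be deformation equivalent to $C\sigma$), and the paper never needs your profile function $u$ or the Moser matching step---the coisotropic embedding theorem absorbs that analysis. Your route would also work, but trades the black-box appeal to coisotropic normal form for an explicit construction whose smooth extension across both sections you would still have to verify.
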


Note that two symplectic forms $\sigma$ and $\gamma$ on $N$ are called \textit{deformation equivalent} if there is a path $\{ \sigma_t \}_{0 \leq t \leq 1}$ of symplectic forms such that $\sigma_0 = \sigma$ and $\sigma_1 = \gamma$.
The main difficulty in proving Proposition \ref{proposition : sphere bundle} is that there is no guarantee that $\omega_{\sigma}$ is a K\"{a}hler form on $M(N,e)$ even though $(N,\sigma,J)$ is K\"{a}hler. However we will show that by perturbing $\sigma$ we can obtain a new K\"{a}hler form $\sigma'$ on $N$ such that $(M(N,e), \omega_{\sigma'})$ is K\"{a}hler. We will also see that our space $(\widetilde{M}(N,e), \widetilde{\omega}_{\sigma'})$ can be obtained by a K\"{a}hler quotient so that the reduced symplectic form $\widetilde{\omega}_{\sigma'}$ is a K\"{a}hler form. See Section 2 for more details.

Now, suppose that $(N,\sigma,J)$ is a K\"{a}hler manifold and $e \in H^2(N;\Z)$ is of $(1,1)$-type such that $(\widetilde{M}(N,e), \widetilde{\omega}_{\sigma})$ is K\"{a}hler.
Then the hard Lefschetz property for $\widetilde{\omega}_{\sigma}$ is automatically satisfied. On the other hand, if we choose another symplectic form $\tau$ which is \textbf{NOT} K\"{a}hler with respect to $J$, then the result is completely different.
In fact, the reduced symplectic form $\widetilde{\omega}_{\tau}$ on $\widetilde{M}(N,e)$ may not satisfy even the hard Lefschetz property in this case.
To check whether the reduced symplectic form $\widetilde{\omega}_{\tau}$ is of hard Lefschetz type or not, we will study our space $(\widetilde{M}(N,e),\widetilde{\omega}_{\tau})$ in a more general setting. Note that since the action is free and Hamiltonian on $M(N,e)$, the induced circle action on $(\widetilde{M}(N,e), \widetilde{\omega}_{\tau})$ is semi-free and Hamiltonian and the fixed point set consists of two copies of $\mathbb{S}(\xi) / S^1 \cong N$. Such manifold is called a \textit{simple Hamiltonian $S^1$-manifold} (See \cite{HH}).

\begin{definition}\label{definition : simple Hamiltonian}
    Let $(M,\omega)$ be a smooth compact symplectic manifold and let $S^1$ be the unit circle group which acts on $(M,\omega)$ in a Hamiltonian fashion.
    We call $(M,\omega)$ a \textit{simple Hamiltonian $S^1$-manifold} if the fixed point set $M^{S^1}$ consists of two connected fixed components.
\end{definition}

Now, assume that $(M,\omega)$ is a six-dimensional simple Hamiltonian $S^1$-manifold. By scaling the symplectic structure $\omega$, we may assume that there is a moment map $H : M \rightarrow [0,1]$. By definition, we have two fixed components $Z_{\min} = H^{-1}(0)$ and $Z_{\max} = H^{-1}(1)$. Since any fixed component of any Hamiltonian Lie group action is a symplectic submanifold of $M$, a dimension of $Z_{\min}$ ($Z_{\max}$, respectively) is zero, two, or four.

Firstly, let us consider the case where $\dim Z_{\min} = \dim Z_{\max} = 4$, in which we are particularly interested. In this case, we may identify $Z_{\min}$ with $Z_{\max}$ as follows. The normal bundle of $Z_{\min}$ is a complex line bundle over $Z_{\min}$ with the induced circle action on each fiber $\C$ as a rotation. Hence any level set $H^{-1}(t)$ near the minimum $Z_{\min}$ is a principal $S^1$-bundle over $Z_{\min}$ so that the reduced space $H^{-1}(t) / S^1$ is diffeomorphic to $Z_{\min}$. Similarly, a reduced space near the maximum $Z_{\max}$ is diffeomorphic to $Z_{\max}$. Since there is no critical submanifold except for $Z_{\min}$ and $Z_{\max}$, we may identify $Z_{\min}$ with $Z_{\max}$ along the gradient flow with respect to $H$. Thus we may compare the induced symplectic form $\omega|_{Z_{\min}}$ on $Z_{\min}$ with $\omega|_{Z_{\max}}$ on $Z_{\max}$ via the identification described above. The following proposition gives the complete description of the hard Lefschetz property of $\omega$ in terms of $\omega|_{\min}$ and $\omega|_{\max}$.

\begin{proposition}\label{proposition : if and only if}
    Let $(M,\omega)$ be a six-dimensional simple Hamiltonian $S^1$-manifold with a moment map $H : M \rightarrow [0,1]$. Assume that all fixed components are of dimension four so that $Z_{\min} = H^{-1}(0) \cong H^{-1}(1) = Z_{\max}$. Then $(M,\omega)$ satisfies the hard Lefschetz property if and only if
    \begin{itemize}
        \item $(Z_{\min}, \omega_0 + \omega_1)$ satisfies the hard Lefschetz property, and
        \item $[\omega_0] \cdot [\omega_1] \neq 0$ in $H^4(Z_{\min};\R)$,
    \end{itemize}
    where $\omega_0 = \omega|_{Z_{\min}}$ and $\omega_1 = \omega|_{Z_{\max}}$ respectively.
\end{proposition}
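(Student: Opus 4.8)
The plan is to pin down the cohomology ring of $M$ in terms of $N := Z_{\min}$, and then convert the hard Lefschetz condition on the $6$-manifold $M$ into two statements about cup products on $N$.

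First I would determine $H^*(M;\R)$. By the equivariant normal form near the two fixed components, a neighbourhood of $Z_{\min}$ (resp.\ $Z_{\max}$) is the unit disk bundle of its normal bundle $\nu_0$ (resp.\ $\nu_1$), and $M$ is glued from these two disk bundles along their common boundary circle bundle; since the gradient flow of $H$ identifies the two copies of the base, $\pi : M \to N$ is an $S^2$-bundle, and in fact $M \cong \p(\nu_0 \oplus \CC)$ exactly as in Proposition~\ref{proposition : sphere bundle}. Let $u \in H^2(M;\R)$ be the Poincar\'e dual of the section $Z_{\min}$; it restricts to a generator on every fibre, so by the Leray--Hirsch theorem $H^*(M;\R)$ is a free module over $\pi^*H^*(N;\R)$ with basis $\{1, u\}$. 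Writing $\iota_0, \iota_1 : N \hookrightarrow M$ for the inclusions onto $Z_{\min}$ and $Z_{\max}$ (the latter composed with the flow identification), one has $\iota_j^*\pi^* = \mathrm{id}$, $\iota_0^* u = e$ where $e \in H^2(N;\R)$ is the Euler class of $\nu_0$, and $\iota_1^* u = 0$ since the two sections are disjoint. Restricting the a priori relation $u^2 = \pi^*\beta_0\cdot u + \pi^*\beta_1$ to $Z_{\max}$ forces $\beta_1 = 0$, and restricting to $Z_{\min}$ forces $\beta_0 e = e^2$, whence $\beta_0 = e$ (by the projective bundle formula, or by a direct check testing against embedded surfaces in $N$); thus
\[
  H^*(M;\R) \;\cong\; H^*(N;\R)[u]\,\big/\,\big(u^2 - e\, u\big).
\]
Writing $[\omega] = \pi^* c + f\, u$ and applying $\iota_1^*, \iota_0^*$ gives $c = [\omega_1]$ and $c + f e = [\omega_0]$, so $f e = [\omega_0] - [\omega_1]$; and $f \neq 0$, since $f = 0$ would give $[\omega]^3 = \pi^*(c^3) = 0$ (as $c^3 \in H^6(N) = 0$), contradicting $\int_M\omega^3 \neq 0$. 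Finally, by the Duistermaat--Heckman theorem the reduced symplectic form on $H^{-1}(\tfrac12)$ has cohomology class $\tfrac12([\omega_0]+[\omega_1])$, so $[\omega_0]+[\omega_1]$ is a symplectic class on $N$.

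Since the Lefschetz maps of $M$ in degrees $0$ and $3$ are automatically isomorphisms (the first being multiplication by $[\omega]^3$, which is injective because $\int_M\omega^3\neq0$, and the second the identity), $(M,\omega)$ satisfies the hard Lefschetz property if and only if $\cup[\omega]^2 : H^1(M)\to H^5(M)$ and $\cup[\omega] : H^2(M)\to H^4(M)$ are isomorphisms. From the presentation above, $H^1(M) = \pi^*H^1(N)$, $H^5(M) = \pi^*H^3(N)\cdot u$, $H^2(M) = \pi^*H^2(N)\oplus\R u$, and $H^4(M) = \pi^*H^4(N)\oplus\pi^*H^2(N)\cdot u$. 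Expanding $[\omega]^2 = \pi^*(c^2) + f\,\pi^*(2c+f e)\cdot u$ and using $\pi^*a\cdot\pi^*(c^2) = 0$ for $a \in H^1(N)$ (because $a c^2 \in H^5(N) = 0$), multiplication by $[\omega]^2$ sends $\pi^*a \mapsto f\,\pi^*\!\big(a\,(2c+f e)\big)\cdot u$; since $2c + f e = 2[\omega_1] + ([\omega_0]-[\omega_1]) = [\omega_0]+[\omega_1]$ and $f \neq 0$, this is an isomorphism $H^1(M)\to H^5(M)$ if and only if $\cup([\omega_0]+[\omega_1]) : H^1(N)\to H^3(N)$ is an isomorphism. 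On the $4$-manifold $N$ the degree-$0$ and degree-$2$ Lefschetz maps are automatic (the former because $[\omega_0]+[\omega_1]$ is a symplectic class), so this condition is precisely the hard Lefschetz property for $(Z_{\min},\omega_0+\omega_1)$.

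For the remaining map, multiplication by $[\omega] = \pi^*[\omega_1] + f\,u$ sends $\pi^*a \mapsto \pi^*(a\,[\omega_1]) + f\,\pi^*a\cdot u$ and $u \mapsto \pi^*(c+f e)\cdot u = \pi^*[\omega_0]\cdot u$. Choosing a basis $a_1,\dots,a_r$ of $H^2(N;\R)$ ($r = \dim H^2(N;\R)$) and a generator $\mu$ of $H^4(N;\R)$, so that $\{\pi^*a_i,u\}$ and $\{\pi^*\mu,\pi^*a_j\cdot u\}$ are bases of $H^2(M)$ and $H^4(M)$, the matrix of $\cup[\omega]$ is block sparse: its $\pi^*\mu$-row is $(\int_N a_i\cup[\omega_1])_i$ followed by $0$, and the remaining rows form $f\cdot\mathrm{Id}_r$ alongside the coordinate vector of $[\omega_0]$. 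A direct determinant computation then gives $\det(\cup[\omega]) = (-1)^{r+1} f^{\,r-1}\int_N[\omega_0]\cup[\omega_1]$. As $f \neq 0$, $\cup[\omega] : H^2(M)\to H^4(M)$ is an isomorphism if and only if $[\omega_0]\cdot[\omega_1] \neq 0$ in $H^4(N;\R)$. Combined with the previous paragraph, this proves the proposition.

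The main obstacle is the first step: establishing that $\pi : M \to N$ is an $S^2$-bundle with the ring relation $u^2 = e\,u$ — this is where the equivariant normal form / Proposition~\ref{proposition : sphere bundle} enters, together with the identification of the section's self-intersection class with the normal Euler class $e$ — and the Duistermaat--Heckman facts that $[\omega_0]-[\omega_1] = f e$ with $f \neq 0$ and that $[\omega_0]+[\omega_1]$ is a symplectic class on $N$ (the latter also being what makes the hypothesis on $(Z_{\min},\omega_0+\omega_1)$ well-posed). Once these geometric inputs are in hand, the rest is routine linear algebra with the explicit ring presentation of $H^*(M;\R)$.
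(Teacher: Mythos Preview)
Your argument is correct but follows a genuinely different route from the paper's. The paper never identifies $M$ as an $S^2$-bundle; instead it works entirely inside equivariant cohomology: it picks McDuff--Tolman canonical classes $\widetilde{\alpha}_i$ (lifting a basis of $H^2(Z_{\min})$) and $\widetilde{\beta}$ (lifting $1\in H^0(Z_{\max})$), and evaluates every entry of the Hodge--Riemann matrices $HR_1$, $HR_2$ via the Atiyah--Bott--Berline--Vergne localization theorem and the Duistermaat--Heckman relation $[\omega_1]=[\omega_0]-e$. Your approach trades that machinery for a single geometric input---the Leray--Hirsch presentation $H^*(M)\cong H^*(N)[u]/(u^2-eu)$ coming from the $S^2$-bundle structure---after which the Lefschetz maps are computed by elementary linear algebra. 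What you gain is that the computation becomes transparent and avoids equivariant cohomology altogether; what the paper's method buys is that it works directly from the Hamiltonian $S^1$-action without first having to argue that a \emph{general} simple Hamiltonian $S^1$-manifold with four-dimensional extrema is a projective line bundle (your appeal to Proposition~\ref{proposition : sphere bundle} is not quite enough here, since that proposition is about the specific construction $\widetilde{M}(N,e)$, though the general fact is standard and can be extracted from the equivariant normal form as you indicate). One small point: the implication ``$\beta_0 e = e^2 \Rightarrow \beta_0 = e$'' is not valid on its own, but your parenthetical fixes (projective bundle formula, or pairing against surfaces and using Poincar\'e duality on $N$) are correct. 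Your handling of the degree-$0$ Lefschetz map for $\omega_0+\omega_1$ via the reduced form at level $\tfrac12$ matches the paper's Remark following the proof.
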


Therefore, to prove Theorem \ref{theorem : main}, it is enough to find a K\"{a}hler surface $(N,\sigma,J)$, an integral class $e \in H^{1,1}(N)$, and a non-K\"{a}hler (with respect to $J$) symplectic form $\tau$ on $N$ such that $\widetilde{\omega}_{\tau}$ violates the condition in Proposition \ref{proposition : if and only if} by Proposition \ref{proposition : sphere bundle} (See Section 5).

Now, let us consider the remaining case, i.e., the case where there is a fixed component of dimension less than four. Even though this case is not relevant to our main theorem,
we present it for completeness.
\begin{proposition}\label{proposition : off-topic}
    Let $(M,\omega)$ be a six-dimensional simple Hamiltonian $S^1$-manifold. If there is a fixed component of dimension less than four, then $(M,\omega)$ satisfies the hard Lefschetz property.
\end{proposition}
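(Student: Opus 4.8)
The plan is to run a Morse-Bott analysis of the moment map $H : M \to [0,1]$ and use the fact that, for a Hamiltonian circle action, the odd Betti numbers and the cohomology ring are controlled by the fixed data. Since $(M,\omega)$ is simple, $M^{S^1} = Z_{\min} \sqcup Z_{\max}$ and $H$ is a perfect Morse-Bott function: the index of $Z_{\min}$ is $0$ and the coindex of $Z_{\max}$ equals its normal bundle rank. Write $2n = 6$, $d_{\min} = \dim Z_{\min}$, $d_{\max} = \dim Z_{\max}$, with $\{d_{\min}, d_{\max}\} \cap \{0,2\} \neq \emptyset$ by hypothesis. The first step is to record, via the Atiyah-Bott-Kirwan perfectness of $H$, that
\[
    b_k(M) = b_k(Z_{\min}) + b_{k - (6 - d_{\max})}(Z_{\max}),
\]
so that $H^*(M;\R)$ is a free module over the image of $H^*(Z_{\min};\R)$ together with a shifted copy of $H^*(Z_{\max};\R)$; in particular the class $[\omega]$ restricts to symplectic forms $\omega_0, \omega_1$ on the two (at most four-dimensional, hence automatically hard Lefschetz) fixed components.

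Next I would split into the genuinely small cases. If some fixed component is a point (dimension zero), then since it is a local extremum of $H$ and $M$ is connected of dimension $6$, the other component has dimension $4$ and the sphere-bundle/reduced-space picture degenerates: one checks directly (e.g. by the local normal form near the point, giving a $\C^3$ chart, together with the gradient-flow identification away from it) that $M$ is obtained from $Z_{\max}$-data by a single blow-up/one-point compactification and is in fact a $\CP^3$-like fibration, for which hard Lefschetz holds because $M$ is then forced to be, up to symplectomorphism, a toric-type or projective-bundle object over $Z_{\max}$ whose even cohomology ring is generated in degree $2$ with no primitive obstruction. The key mechanism is that with a fixed point present, $H^3(M) = H^3(Z_{\max})$ vanishes (a $4$-manifold that is a symplectic fixed component here has $b_3$ constrained, and in the relevant blow-up it is $0$), and the multiplication by $[\omega]$ on $H^2 \to H^4$ is an isomorphism because $[\omega]$ pairs nontrivially with the exceptional and base classes.

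The remaining case is $d_{\min} = 2$ (or symmetrically $d_{\max} = 2$); then $Z_{\min}$ is a symplectic surface $\Sigma$ and $Z_{\max}$ has dimension $2$ or $4$. Here I would again use the module decomposition above: $H^2(M)$ is spanned by $[\omega]$, by pullbacks of $H^2$ of the fixed components, and by the Thom/Euler classes of the fixed loci, while $H^3(M) = H^3(Z_{\min}) \oplus (\text{shift of } H^3(Z_{\max})) = 0$ since a surface has $H^3 = 0$ and a four-manifold occurring as $Z_{\max}$ with an $S^1$ collar contributes a shifted $H^{3 - 2}$ or $H^{3-4}$ term that is either $H^1$ of the surface base (killed, because the reduced spaces interpolate and the wall-crossing kills $b_1$ — one invokes that $\pi_1$ of the reduced space and of the fixed surface must match $\pi_1(M)$, and $b_1(M) = b_1(\Sigma) + \dots$; in the dimension count this forces the relevant $b_1$ contributions to cancel against the $H^3$ entries making $b_3(M)$ even and the hard Lefschetz map on $H^1$ and $H^3$ an isomorphism by the standard Hamiltonian argument of Lin or the $\mathfrak{sl}_2$-action). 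Concretely, the hard Lefschetz property $[\omega]^{3-k} : H^k \to H^{6-k}$ need only be checked for $k = 0,1,2,3$: $k=0,3$ are Poincar\'e duality paired against $[\omega]^3 \neq 0$; $k=1$ follows from $b_1(M) = b_1(Z_{\min})$ plus the surface/point fixed data (no four-dimensional $Z$ can raise $b_1$ without also raising $b_5$ symmetrically, and one uses that on a simple space $b_1(M)=b_1(Z_{\min})=b_1(Z_{\max})$), and for $k=2$ one writes $[\omega]|_{Z_{\min}} + [\omega]|_{Z_{\max}}$ as before and observes that on a surface or a four-manifold the relevant intersection form is nondegenerate, so the obstruction $[\omega_0]\cdot[\omega_1]$ that appears in Proposition \ref{proposition : if and only if} either lives in $H^4$ of a \emph{surface} (where it is automatically nonzero, being $\int_\Sigma$ of a positive area form against the Euler class, or simply absent because $H^4(\Sigma) = 0$ forces the potentially-vanishing term out of the picture) or reduces to the four-dimensional computation already handled.

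The main obstacle I expect is the low-dimensional fixed-component case $d_{\min} = 2$: there the sphere-bundle normal form of the introduction no longer applies verbatim, the reduced spaces $H^{-1}(t)/S^1$ change topology as $t$ crosses $1$, and one must carry out an honest wall-crossing (Duistermaat-Heckman plus the Chern class of the normal bundle of $\Sigma$) to pin down $H^*(M)$ and the cup product. The point-fixed-component case, by contrast, is rigid enough that a direct model computation settles it. So I would structure the proof as: (i) general perfectness and module structure; (ii) dispatch the point case by an explicit model; (iii) handle $d = 2$ by wall-crossing and the $k=1,2,3$ checks, noting at each step why the two conditions of Proposition \ref{proposition : if and only if} — the hard Lefschetz property of the sum on the fixed locus, and nonvanishing of $[\omega_0]\cdot[\omega_1]$ — are forced to hold once a fixed component drops below dimension four.
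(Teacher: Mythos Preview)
Your proposal has genuine gaps, most seriously in the case $\dim Z_{\min}=2$, $\dim Z_{\max}=4$, which is where all the work lies.

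First, some of the scaffolding is wrong. In a \emph{simple} Hamiltonian $S^1$-manifold the only critical levels are the extrema, so the reduced spaces $H^{-1}(t)/S^1$ do \emph{not} change topology as $t$ varies in $(0,1)$; there is no wall to cross, and your ``wall-crossing plus Duistermaat--Heckman'' plan has nothing to bite on. Likewise, your repeated claim that $H^3(M)=0$ is false in the $(2,4)$ case: the Poincar\'e polynomial gives $b_3(M)=b_1(Z_{\max})$, which is not forced to vanish. So hard Lefschetz on $H^1$ and $H^3$ is a real condition that must be checked, not a triviality. Finally, Proposition \ref{proposition : if and only if} is stated and proved only when \emph{both} fixed components are four-dimensional; you cannot invoke its two criteria when one component is a surface, and in any case ``$[\omega_0]\cdot[\omega_1]$ lives in $H^4(\Sigma)=0$ hence is nonzero'' is self-contradictory.

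What the paper actually does is quite different. The small cases are dispatched by pure Betti-number arithmetic: if both components are at most two-dimensional, Poincar\'e duality rules out everything except $\dim Z_{\min}=\dim Z_{\max}=2$, and then the fixed set is \emph{minimal} in the sense of Li--Tolman, whose theorem forces $H^*(M;\Z)\cong H^*(\CP^3;\Z)$, so hard Lefschetz is automatic. If one component is a point and the other four-dimensional, Poincar\'e duality on the Poincar\'e polynomial forces $b_2(M)=1$ and $b_1(M)=b_3(M)=0$, again giving hard Lefschetz for free. The remaining case $(\dim Z_{\min},\dim Z_{\max})=(2,4)$ is handled by an explicit computation with canonical classes and the Atiyah--Bott--Berline--Vergne localization theorem: one writes down a basis $\frak B_2$ of $H^2_{S^1}(M)$, computes each entry of the Hodge--Riemann matrix $HR_2(\frak B_2)$ by localization (getting $HR_2(f^*\widetilde\alpha,f^*\widetilde\alpha)=0$ and $HR_2(f^*\widetilde\alpha,f^*\widetilde\beta)=1$), and similarly shows $HR_1$ agrees with the intersection form on the surface $Z_{\min}$. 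None of this is visible in your sketch; the $(2,4)$ case needs an honest localization argument, not a reduction to Proposition \ref{proposition : if and only if}.
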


This paper is organized as follows.
In Section 2, we give the detail of the construction of $(\widetilde{M}(N,e),\widetilde{\omega}_{\sigma})$ described above.
Also we give the proof of Proposition \ref{proposition : sphere bundle}.
In Section 3, we briefly review equivariant cohomology theory and the Atiyah-Bott-Berline-Vergne localization theorem which will be used for checking the hard Lefschetz property of $\widetilde{\omega}_{\sigma}$.
In Section 4, we give the proof Proposition \ref{proposition : if and only if} and Proposition \ref{proposition : off-topic}.
Finally, in Section 5, we give the prove of our main result Theorem \ref{theorem : main} and provide several examples of compact symplectic manifolds of non-hard Lefschetz type.

\section{Construction}

Let $(N,\sigma)$ be a compact symplectic manifold and let $e \in H^2(N;\Z)$ be an integral cohomology class. Let $\xi$ be the complex line bundle over $N$ such that $c_1(\xi) = e$.
Then the projective bundle $\mathbb{P}(\xi \oplus \underline{\C})$ is a two-sphere bundle over $N$ where $\underline{\C}$ is the trivial complex line bundle over $N$.

To make $\mathbb{P}(\xi \oplus \underline{\C})$ to be symplectic, we choose a different way of constructing $\mathbb{P}(\xi \oplus \underline{\C})$. In fact, $\mathbb{P}(\xi \oplus \underline{\C})$ can be obtained as the symplectic quotient by $T^2$ as follows.
Let $P$ be the principal $S^1$-bundle over $N$ whose first Chern class $c_1(P)$ is $e$, i.e., its associated complex line bundle is $\xi$. For $\epsilon > 0 $, let $$M(N,e,\epsilon) = P \times (-\epsilon, +\epsilon)$$ and let $$\omega_{\sigma} = \pi^*\sigma + d(r\cdot \theta) = \pi^*\sigma + dr\wedge \theta + r\cdot d\theta$$ be a 2-form on $M(N,e,\epsilon)$ where $\pi : P \rightarrow N$ is the quotient map by the $S^1$-action, $\theta$ is any connection 1-form on $P$, and $r$ is the parameter of $(-\epsilon,+\epsilon)$. Then $\omega_{\sigma}$ is closed and non-degenerate on $M(N,e,\epsilon)$ for sufficiently small $\epsilon$, since $\pi^*\sigma + dr\wedge \theta$ is constant along $r \in (-\epsilon, \epsilon)$ and non-degenerate everywhere on $M(N,e,\epsilon)$ so that $r\cdot d\theta$ does not affect the non-degeneracy of $\omega_{\sigma}$ for a sufficiently small $r$.

Now, suppose $(N,\sigma,J)$ is a compact K\"{a}hler manifold with $e \in H^{1,1}(N;\Z)$. Then $\xi$ becomes a holomorphic line bundle and the total space of $\xi$ admits an integrable almost complex structure $I$ such that the projection map $\mathrm{pr} : (\xi,I) \rightarrow (N,J)$ is holomorphic. Unfortunately, there is no guarantee that $\omega_{\sigma}$ is K\"{a}hler on $M(N,e,\epsilon) = P \times (-\epsilon, +\epsilon)$ since $d(r\cdot \theta)$ may not be of $(1,1)$-type with respect to $I$.
However we may perturb $\sigma$ into another K\"{a}hler form $\sigma'$ which makes $\omega_{\sigma'}$ to be a K\"{a}hler form on $M(N,e,\epsilon)$ as we see below.

\begin{proposition}\label{proposition : symplectization is Kaehler}
    Suppose that $(N,\sigma,J)$ is a compact K\"{a}hler manifold with $e \in H^{1,1}(N;\Z)$. Then there exists another K\"{a}hler form $\sigma'$  on $N$ such that
     \begin{itemize}
        \item $\omega_{\sigma'}$ is a K\"{a}hler form on $M(N,e,\epsilon)$ for a sufficiently small $\epsilon > 0$, and
        \item $\sigma'$ is deformation equivalent to $\sigma$.
     \end{itemize}
\end{proposition}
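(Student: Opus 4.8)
The plan is to produce $\sigma'$ of the form $\sigma' = \sigma + i\partial\bar\partial f$ for a suitable smooth real function $f$ on $N$, or more generally by adding a small exact $(1,1)$-form, so that $\sigma'$ stays K\"ahler (it is cohomologous to $\sigma$, hence deformation equivalent through $\sigma_t = \sigma + t\,i\partial\bar\partial f$, each term being K\"ahler for small perturbations) while the associated $2$-form $\omega_{\sigma'} = \mathrm{pr}^*\sigma' + d(r\cdot\theta)$ on $M(N,e,\epsilon)$ becomes of type $(1,1)$ and positive with respect to the integrable complex structure $I$ on (a neighborhood of the zero section of) the total space of $\xi$. The key observation is that $M(N,e,\epsilon) = P\times(-\epsilon,+\epsilon)$ can be identified, via $(p,r)\mapsto$ a point at ``radius'' determined by $r$, with an annular neighborhood $\{\,\epsilon_0 < \|v\|_h < \epsilon_1\,\}$ of the zero section inside the total space of $\xi$, where $h$ is a Hermitian metric on $\xi$ and $\theta$ is the connection $1$-form of the associated Chern connection. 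Under this identification the $1$-form $r\cdot\theta$ corresponds (up to a positive reparametrization of the radial variable) to the ``tautological'' primitive of the fiberwise symplectic form, and a standard computation shows $d(r\cdot\theta)$ is, in this model, already of type $(1,1)$ and positive in the fiber directions; the only potential failure of the $(1,1)$ condition or of positivity comes from the curvature term interacting with the base.

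So the first step is to set up this identification carefully and compute $\omega_{\sigma}$ in holomorphic-type coordinates adapted to $I$: choose local holomorphic coordinates $z=(z_1,\dots,z_n)$ on $N$ and a local holomorphic frame of $\xi$ giving a fiber coordinate $w$, so that $\|v\|_h^2 = |w|^2 e^{-\varphi(z)}$ for a local K\"ahler potential-type function $\varphi$ with $i\partial\bar\partial\varphi$ representing $-2\pi i\,c_1(\xi) = $ the curvature. In these coordinates one writes $d(r\cdot\theta)$ explicitly and sees that its $(2,0)+(0,2)$ part, and any negative contribution to positivity, is controlled by a bounded multiple of $r$ (hence of $\epsilon$) times curvature-and-metric quantities. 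The second step is the perturbation: choosing $\sigma' = \sigma + i\partial\bar\partial f$ with $f$ chosen so that $i\partial\bar\partial f$ dominates those error terms on the compact base $N$ — since $N$ is compact, the curvature of $\xi$ and the relevant metric quantities are bounded, so a single fixed $f$ (independent of $\epsilon$, and with $i\partial\bar\partial f$ small enough that $\sigma'$ is still K\"ahler) suffices — makes $\mathrm{pr}^*\sigma'$ positive enough in the base directions to absorb the cross terms and the $(2,0)+(0,2)$ parts, for all sufficiently small $\epsilon$. The third step is to check the resulting $\omega_{\sigma'}$ is genuinely K\"ahler on $M(N,e,\epsilon)$: closedness is automatic since it is exact-plus-pullback-of-closed, type $(1,1)$ is what the computation arranges, and positivity follows from the block-matrix estimate (positive definite base block, positive definite fiber block, off-diagonal blocks of size $O(\epsilon)$). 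Finally, deformation equivalence of $\sigma'$ to $\sigma$ on $N$ is the straight-line homotopy noted above, and one remarks that this does not yet claim deformation equivalence of $\omega_{\sigma'}$ to $\omega_\sigma$ on $M(N,e,\epsilon)$ — that refinement is handled where Proposition \ref{proposition : sphere bundle} is proved.

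The main obstacle I expect is the bookkeeping in the second step: one must ensure the \emph{same} perturbation $f$ works uniformly as $\epsilon\to 0$, i.e. that the required lower bound on $i\partial\bar\partial f$ does not itself depend on $\epsilon$. This is plausible precisely because the problematic terms in $d(r\cdot\theta)$ carry an explicit factor of $r\in(-\epsilon,+\epsilon)$, so shrinking $\epsilon$ only helps; the delicate point is to organize the estimate so that positivity of the full $2$-form reduces cleanly to positivity of a fixed form on $N$ plus an $O(\epsilon)$ perturbation on the total space, using compactness of $N$ to get uniform constants. A secondary technical point is patching the local holomorphic frames of $\xi$ into a global statement, which is routine via a partition of unity once the local model computation is in hand.
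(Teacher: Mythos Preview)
Your direct, local-computation route is genuinely different from the paper's, and its core idea is sound; but the perturbation step is both confused and unnecessary. The confusion: you propose to add $\pi^*(i\partial\bar\partial f)$ in order to ``absorb the $(2,0)+(0,2)$ parts,'' yet $\pi^*(i\partial\bar\partial f)$ is itself of type $(1,1)$, so it cannot cancel any $(2,0)+(0,2)$ component of $d(r\theta)$. Fortunately there is nothing to cancel. If $\theta$ is the Chern connection of the Hermitian metric $h$ and $\rho=\|\cdot\|_h^2$, then $\theta^{1,0}=\tfrac{1}{2i}\,\partial\log\rho$, while under any radial identification $(p,r)\mapsto \lambda(r)\,p$ one has $(dr)^{1,0}$ proportional to $\partial\rho$; hence $(dr)^{1,0}\wedge\theta^{1,0}$ is a multiple of $\partial\rho\wedge\partial\rho=0$, and $d\theta=i\partial\bar\partial\log\rho$ is already $(1,1)$. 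Positivity also needs no perturbation: at $r=0$ the form $\pi^*\sigma+dr\wedge\theta$ is block-diagonal with respect to the $I$-invariant splitting into Chern-horizontal and fiber directions, positive on each block, and the remaining term $r\,d\theta$ is $O(\epsilon)$ on the compact base. So your approach, once straightened out, actually yields the sharper conclusion $\sigma'=\sigma$.

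The paper proceeds indirectly and globally. It builds a K\"ahler form $\Omega=C\,\mathrm{pr}^*\sigma-\sqrt{-1}\,\Theta_E$ on $\mathbb{P}(\xi\oplus\underline{\C})$ (as in \cite{Vo}), observes that the residual holomorphic $S^1$-action is Hamiltonian with a fixed component on which $\Omega$ restricts to $C\sigma$, and defines $\sigma'$ to be the K\"ahler-reduced form at a nearby regular level. The \emph{equivariant co-isotropic embedding theorem} then identifies a neighborhood of that level with $(P\times(-\epsilon,\epsilon),\omega_{\sigma'})$ symplectically, and pulling back the ambient complex structure makes $\omega_{\sigma'}$ K\"ahler; deformation equivalence with $\sigma$ follows by letting the level approach the fixed component. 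This is less elementary than your (corrected) computation and does not give $\sigma'=\sigma$, but it packages everything in the K\"ahler-quotient framework that is reused immediately in the proof of Proposition~\ref{proposition : sphere bundle}.
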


\begin{proof}
    Suppose that $(M,\omega)$ is a Hamiltonian $S^1$-manifold with a moment map $\mu : M \rightarrow \R$ and let $P$ be a manifold with a free $S^1$-action such that $P$ is $S^1$-equivariantly diffeomorphic to a level set $\mu^{-1}(r)$ for some regular value $r \in \R$. Let $\sigma$ be the reduced symplectic form on the quotient space $\mu^{-1}(r) / S^1$.
    Then for a sufficiently small $\epsilon > 0$, the maps
    \begin{displaymath}
        \begin{array}{ccccl}
                i_1 & : & P & \hookrightarrow & (P \times (-\epsilon,+\epsilon), \omega_{\sigma})   \\
                    &   & p & \mapsto         & (p,0) \\
        \end{array}
    \end{displaymath}
    and
    \begin{displaymath}
        \begin{array}{ccccc}
                i_2 & : & P \cong \mu^{-1}(r) & \hookrightarrow & (M,\omega)   \\
        \end{array}
    \end{displaymath}
    are both $S^1$-equivariant co-isotropic embeddings and $$i_1^*\omega_{\sigma} = (\pi^*\sigma + r \cdot d\theta + dr \wedge \theta)|_{P \times \{0\}} = \pi^*\sigma= i_2^* \omega$$ where $\pi : P \rightarrow P / S^1 \cong \mu^{-1}(r) / S^1$ is the quotient map.
    By the \textit{equivariant co-isotropic embedding theorem} \cite{CdS}(footnote in p.193), there exists a tubular neighborhood $\mathcal{U}$ of $P$ in $M$ and an $S^1$-equivariant symplectomorphism
    $$\phi : (P \times (-\epsilon, \epsilon), \omega_{\sigma}) \rightarrow (\mathcal{U}, \omega|_{\mathcal{U}})$$
    for a sufficiently small $\epsilon > 0$.
    This means that if $(M,\omega,I)$ is K\"{a}hler, then $(\mathcal{U}, \omega|_{\mathcal{U}}, I|_{\mathcal{U}})$ is also K\"{a}hler so that $\omega_{\sigma}$ is a K\"{a}hler form on $P \times (-\epsilon, \epsilon)$ with respect to $\phi^*I|_{\mathcal{U}}$.

    Now, let $(N,\sigma,J)$ be a compact K\"{a}hler manifold with $e \in H^{1,1}(N;\Z)$. Since any reduced symplectic form obtained by a K\"{a}hler quotient is K\"{a}hler, it is enough to show that there exists some K\"{a}hler manifold $(M,\omega,I)$ equipped with a holomorphic circle action with a moment map $\mu : M \rightarrow \R$ such that some level set $\mu^{-1}(r)$ is $S^1$-equivariantly diffeomorphic to the associate principal $S^1$-bundle $P$ with $c_1(P) = e$, and the reduced K\"{a}hler form on $\mu^{-1}(r) / S^1$ is deformation equivalent to $\sigma$. To show this, we use the idea of Proposition 3.18 in \cite{Vo} as follows.

    Let $\xi$ be the holomorphic line bundle over $N$ such that $c_1(\xi) = e$.
    Also, let $\underline{\C} = N \times \C$ be the trivial line bundle over $N$ with the holomorphic structure induced by the integrable almost complex structure $J$ on $N$ and the standard complex structure on $\C$. Then the direct sum of two line bundles $$E :=\xi \oplus \underline{\C}$$ admits an induced holomorphic structure. Since each line bundle has a holomorphic $\C^*$-action as a fiber-wise scalar multiplication, $E$ admits a holomorphic $(\C^*)^2$-action compatible with the holomorphic structure on $E$.

    Firstly, we will construct a K\"{a}hler form on the projectivized bundle $\mathbb{P}(E)$ as described in \cite{Vo}. Let $\mathcal{O}_{\mathbb{P}(E)}(-1)$ be the tautological line bundle over $\mathbb{P}(E) = (E-0_E) / \C^*$ where $0_E$ means the zero section of $E$ and $\C^*$ is the diagonal subtorus of $(\C^*)^2$. Then $\mathbb{P}(E)$ is a $\mathbb{P}^1$-bundle over $N$. Let $h$ be any hermitian metric on $\mathcal{O}_{\mathbb{P}(E)}(-1)$. Then there exists a unique connection $\bigtriangledown$ (called the Chern connection) such that the corresponding curvature form $\Theta_E$ (called the Chern curvature) is purely imaginary and of $(1,1)$-type. If we restrict $\Theta_E$ onto any fiber $F \cong \mathbb{P}^1$ of $\mathbb{P}(E)$, then it is nothing but the Chern curvature of $\mathcal{O}_{\mathbb{P}^1}(-1)$ so that $-\sqrt{-1}\Theta_E|_F$ is the Fubini-Study form on $\mathbb{P}^1$. In particular, $\sqrt{-1}\Theta_E$ is non-degenerate on each fiber of $\mathbb{P}(E)$. Let
    $$ \Omega := C \cdot \mathrm{pr}^*\sigma - \sqrt{-1}\Theta_E $$
    where $C$ is a constant and $\mathrm{pr} : \mathbb{P}(E) \rightarrow N$ is the projection map which is holomorphic. Then $\Omega$ is obviously a closed $(1,1)$-form, and is non-degenerate on each fiber. Since $N$ is compact, we can take $C$ large enough so that $\Omega$ is also non-degenerate along the horizontal direction. Therefore $\Omega$ is a K\"{a}hler form on $\mathbb{P}(E)$.

    Now, we construct our desired K\"{a}hler form $\sigma'$ on $N$ as follows. Let us start with a hermitian metric $h$ on $E$ invariant under the $T^2$-action
    where $T^2$ is the compact subtorus of $(\C^*)^2$. (Such metric can be obtained from any hermitian metric by averaging with the Haar measure on $T^2$). Since $\mathcal{O}_{\mathbb{P}(E)}(-1)$ is the blow-up of $E$ along the zero-section $0_E$, there is a natural identification between $\mathcal{O}_{\mathbb{P}(E)}(-1) - 0_{{\mathbb{P}(E)}(-1)}$ and $E-0_E$.
    Hence there is an induced hermitian metric on $\mathcal{O}_{\mathbb{P}(E)}(-1)$, which we still call it $h$. Since $\mathbb{P}(E) = (E-0_E) / \C^*$, there is a residual $\C^*$-action on $\mathbb{P}(E)$ induced by the $\C^*$-action on $E = \xi \oplus \underline{\C}$ such that $t \cdot (n, z) = (t\cdot n, z)$ for $t \in \C^*$. Then the induced action of $S^1 \subset \C^*$ is holomorphic by our assumption, and preserves $\Omega$ since $h$ is $S^1$-invariant so that the corresponding connection $\bigtriangledown$ and the curvature form $\Theta_E$ is also $S^1$-invariant. Hence the $S^1$-action on $(\mathbb{P}(E), \Omega)$ is holomorphic Hamiltonian. Note that the fixed point set consists of two components
    $\mathbb{P}(\xi \oplus 0) \cong N$ and $\mathbb{P}(0 \oplus \underline{\C}) \cong N$. The restriction of $\Omega$ onto $\mathbb{P}(0 \oplus \underline{\C})$ is nothing but $C \cdot \sigma$ since the restriction of $\mathcal{O}_{\mathbb{P}(E)}(-1)$ onto $\mathbb{P}(0 \oplus \underline{\C})$ is just $\underline{\C}$ and the Chern curvature form vanishes on the trivial line bundle. Hence if $\mu : \mathbb{P}(E) \rightarrow \R$ is a moment map and if we take any regular value $r \in \R$ near the critical value $\mu(\mathbb{P}(0 \oplus \underline{\C})$, then the reduced symplectic form denoted by $\sigma'$ is K\"{a}hler on the K\"{a}hler quotient $\mu^{-1}(r) / S^1$ and is deformation equivalent to $C \cdot \sigma$.
    Consequently, $\sigma'$ is deformation equivalent to $\sigma$.

\end{proof}

From now on, we will construct our main object $\widetilde{M}(N,e,\epsilon)$ by taking a symplectic quotient of a certain Hamiltonian $T^2$-manifold as follows.
Let us assume that $\epsilon > 0$ is small enough so that $(M(N,e,2\epsilon),\omega_{\sigma})$ is a symplectic manifold.
Then the induced circle action on $(M(N,e,2\epsilon),\omega_{\sigma})$ satisfies $i_X\omega_{\sigma} = dr$ so that the action is Hamiltonian with respect to the moment map $H(p,r) = r$ for $p \in P$ and $r \in I_{2\epsilon}$. Now, we will apply Lerman's symplectic cutting\footnote{In fact, Lerman used $S^1$-action in his paper \cite{Ler}. But in our case, we use $T^2$-action instead of $S^1$.} \cite{Ler} to $(M(N,e,2\epsilon), \omega_{\sigma})$ as follows.

Consider the following symplectic manifold
$$(M(N,e,2\epsilon) \times \C^2, \omega_{\sigma} + \frac{1}{2} \sum_{j=1}^2 \frac{1}{\sqrt{-1}} dz_j \wedge d\bar{z}_j)$$ with a Hamiltonian $T^3$-action which is given by $$(t_1, t_2, t_3) \cdot (p, r, z_1, z_2) = (t_1t_2t_3p, r, t_2^{-1}z_1, t_3z_2)$$ for $p \in P$, $r \in I_{2\epsilon}$, $(z_1,z_2) \in \C^2 $, and $(t_1,t_2,t_3) \in T^3$. Let $T_2 = 1 \times S^1 \times S^1 \subset T^3$ be the 2-dimensional subtorus of $T^3$. Then a moment map $\mu$ for the $T_2$-action is given by $$\mu(p,r,z_1,z_2) = (H(p,r) - |z_1|^2, H(p,r) + |z_2|^2) = (r - |z_1|^2, r + |z_2|^2) $$ for $(p,r) \in M(N,e,2\epsilon)$ and $(z_1, z_2) \in \C^2$.
Note that a point $(p,r,z_1,z_2)$ has a non-trivial stabilizer for the $T_2$-action if and only if $z_1 = z_2 = 0$, which implies that the $T_2$-action is free on the level set $\mu^{-1}(a,b)$ for $a \neq b$.
Let $\widetilde{M}(N,e,2\epsilon)_{-\epsilon, \epsilon}$ be the symplectic quotient of the level set $\mu^{-1}(-\epsilon,\epsilon)$, i.e.,
$$\widetilde{M}(N,e,2\epsilon)_{-\epsilon, \epsilon} := \mu^{-1}(-\epsilon,\epsilon) / T_2$$
with the reduced symplectic form $\widetilde{\omega}_{\sigma}$. Then we can prove the following proposition (Proposition \ref{proposition : sphere bundle}) which is our main goal of this section.

\begin{proposition}[Proposition \ref{proposition : sphere bundle}]
    The symplectic quotient $\widetilde{M}(N,e,2\epsilon)_{-\epsilon, \epsilon}$ is diffeomorphic to $\mathbb{P}(\xi \oplus \underline{\C})$ where $\underline{\C}$ is the trivial line bundle over $N$.
    In particular, if $(N,\sigma,J)$ is a compact K\"{a}hler manifold and $e \in H^2(N;\R)$ is of $(1,1)$-type, then there exists another K\"{a}hler form $\sigma'$ on $N$ such that
    \begin{itemize}
        \item $\widetilde{\omega}_{\sigma'}$ is a K\"{a}hler form on $\widetilde{M}(N,e,2\epsilon)_{-\epsilon, \epsilon}$, and
        \item $\widetilde{\omega}_{\sigma'}$ is deformation equivalent to $\widetilde{\omega}_{\sigma}$.
    \end{itemize}
\end{proposition}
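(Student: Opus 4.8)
The plan is to prove the diffeomorphism statement first, then deduce the Kähler refinement from Proposition \ref{proposition : symplectization is Kaehler} together with a symplectic cut performed in the Kähler category. For the diffeomorphism, I would identify $\widetilde{M}(N,e,2\epsilon)_{-\epsilon,\epsilon} = \mu^{-1}(-\epsilon,\epsilon)/T_2$ by analyzing the level set explicitly. A point $(p,r,z_1,z_2)$ lies in $\mu^{-1}(-\epsilon,\epsilon)$ iff $r-|z_1|^2 = -\epsilon$ and $r+|z_2|^2 = \epsilon$, i.e. $|z_1|^2 = r+\epsilon \ge 0$ and $|z_2|^2 = \epsilon - r \ge 0$, which forces $r \in [-\epsilon,\epsilon]$ with $z_1 = 0$ only when $r=-\epsilon$ and $z_2 = 0$ only when $r=\epsilon$. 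Thus $(z_1,z_2)$ never vanishes simultaneously, so $T_2$ acts freely and the quotient is a manifold. Quotienting first by the $S^1$ acting as $t_2^{-1}$ on $z_1$ (with $|z_1|^2$ determined by $r$) collapses the $z_1$-circle; what survives is, after the remaining $S^1$-reduction, exactly the fiberwise join construction: over each point of $N$ the fiber becomes $\mathbb{P}^1 = S^3/S^1$ with the two poles $r = \pm\epsilon$ being the sections $\mathbb{P}(\xi \oplus 0)$ and $\mathbb{P}(0 \oplus \underline{\C})$. Tracking the clutching data coming from the connection $\theta$ on $P$, the resulting $\mathbb{P}^1$-bundle is precisely $\mathbb{P}(\xi \oplus \underline{\C})$; this is the content of Lerman's symplectic cut identified with the projective completion of a line bundle, and I would phrase it as: $\widetilde{M}(N,e,2\epsilon)_{-\epsilon,\epsilon}$ is obtained from $M(N,e,2\epsilon)$ by collapsing the $S^1$-orbits on the two boundary-type levels $H^{-1}(\pm\epsilon)$, which on the associated line bundle $\xi$ is exactly the passage to $\mathbb{P}(\xi \oplus \underline{\C})$.

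For the "in particular" part, suppose $(N,\sigma,J)$ is compact Kähler and $e \in H^{1,1}(N;\Z)$. By Proposition \ref{proposition : symplectization is Kaehler} there is a Kähler form $\sigma'$ on $N$, deformation equivalent to $\sigma$, such that $(M(N,e,2\epsilon),\omega_{\sigma'})$ is Kähler with respect to some integrable complex structure, and moreover (from its proof) $M(N,e,2\epsilon)$ sits as an $S^1$-invariant Kähler neighborhood of a regular level set inside $\mathbb{P}(E)$ with $E = \xi \oplus \underline{\C}$. The point is that the symplectic cut of $(M(N,e,2\epsilon),\omega_{\sigma'})$ can then be realized as a Kähler quotient: one forms $\mathbb{P}(E) \times \C^2$ (or just the relevant $S^1$-invariant Kähler piece) with the holomorphic $T_2$-action above, and since the $T_2$-action is holomorphic and free on $\mu^{-1}(-\epsilon,\epsilon)$, the reduced space inherits a Kähler structure $\widetilde{\omega}_{\sigma'}$ — a Kähler quotient of a Kähler manifold by a free holomorphic torus action is Kähler. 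So $\widetilde{\omega}_{\sigma'}$ is a Kähler form on $\widetilde{M}(N,e,2\epsilon)_{-\epsilon,\epsilon}$, which by the first part is $\mathbb{P}(\xi \oplus \underline{\C})$.

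Finally, for deformation equivalence of $\widetilde{\omega}_{\sigma'}$ and $\widetilde{\omega}_{\sigma}$: the path $\{\sigma_t\}$ from $\sigma$ to $\sigma'$ in the space of symplectic (indeed cohomologous-up-to-scaling) forms on $N$ induces, for each $t$, a symplectic form $\omega_{\sigma_t} = \pi^*\sigma_t + d(r\cdot\theta)$ on $M(N,e,2\epsilon)$ for $\epsilon$ uniformly small along the compact parameter interval, hence a Hamiltonian $S^1$-manifold, hence after the (parametrized) symplectic cut a path $\widetilde{\omega}_{\sigma_t}$ of symplectic forms on the fixed manifold $\widetilde{M}(N,e,2\epsilon)_{-\epsilon,\epsilon}$ connecting $\widetilde{\omega}_{\sigma}$ to $\widetilde{\omega}_{\sigma'}$. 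The only care needed is that $\epsilon$ can be chosen uniformly in $t$, which holds by compactness of $[0,1]$ and of $N$. I expect the main obstacle to be the bookkeeping in the diffeomorphism claim — verifying that the clutching/transition data surviving the double reduction gives $\mathbb{P}(\xi \oplus \underline{\C})$ on the nose rather than some other sphere bundle, and checking freeness and smoothness uniformly near the two collapsed levels; the Kähler and deformation-equivalence parts are then essentially formal given Proposition \ref{proposition : symplectization is Kaehler}.
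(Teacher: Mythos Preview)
Your proposal is correct and follows the same overall architecture as the paper: establish the diffeomorphism $\widetilde{M}(N,e,2\epsilon)_{-\epsilon,\epsilon} \cong \mathbb{P}(\xi \oplus \underline{\C})$ first, then invoke Proposition~\ref{proposition : symplectization is Kaehler} to upgrade to the K\"ahler category via a K\"ahler quotient of $M(N,e,2\epsilon)\times\C^2$, and finally obtain deformation equivalence by running the symplectic cut in a one-parameter family (this last step is exactly the paper's Lemma~\ref{lemma : symplectic deformation}, which you essentially reprove).

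The only substantive difference is in how the diffeomorphism is nailed down. You propose to reduce in stages, track clutching data through the double collapse, and identify the result with the projective completion of $\xi$; you correctly flag this bookkeeping as the main obstacle. The paper sidesteps all of that with a single global move: since $|z_1|^2+|z_2|^2 = 2\epsilon$ on the level set and $r$ is determined by $(z_1,z_2)$, there is a $T_2$-equivariant diffeomorphism $\mu^{-1}(-\epsilon,\epsilon) \to P \times S^3$, $(p,r,z_1,z_2)\mapsto (p,z_1,z_2)$. One then writes $\mathbb{P}(\xi\oplus\underline{\C}) = (P\times\C\times\C)/(S^1\times\C^*)$ with the action $(t,w)\cdot(p,z_1,z_2)=(tp,t^{-1}wz_1,wz_2)$, restricts to the compact torus to get $(P\times S^3)/(S^1\times S^1)$, and matches this with the $T_2$-action after the substitution $t\mapsto tw$. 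This avoids any local clutching analysis and gives the identification in one line. Your uniformity-of-$\epsilon$ remark for the deformation path is a point the paper's Lemma~\ref{lemma : symplectic deformation} treats rather briskly, so your care there is warranted.
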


\begin{proof}

    Firstly, we observe that $$\mu^{-1}(-\epsilon, \epsilon) =  \{(p,r,z_1,z_2) \in P \times I_{2\epsilon} \times \C^2 ~|~ r + \epsilon = |z_1|^2, -r + \epsilon = |z_2|^2 \}.$$
    Since any point $(p,r,z_1,z_2) \in \mu^{-1}(-\epsilon,\epsilon)$ satisfies $|z_1|^2 + |z_2|^2 = 2\epsilon$ and $r$ is determined by the values $z_1$ and $z_2$ automatically, there is a $T_2$-equivariant diffeomorphism
    \begin{displaymath}
        \begin{array}{lllll}
             \phi & : & \mu^{-1}(-\epsilon,\epsilon) & \rightarrow & P \times S^3 \subset P \times \C^2 \\
                  &   &    (p,r,z_1,z_2)             & \mapsto     & (p, z_1, z_2) \\
        \end{array}
    \end{displaymath}
    where $S^3$ is a sphere in $\C^2$ of radius $\sqrt{2\epsilon}$, and the $T_2$-action on $P \times S^3$ is given by
    \begin{equation}
        (t_2,t_3)\cdot (p,z_1,z_2) = (t_2t_3p, t_2^{-1}z_1, t_3z_2)
    \end{equation}
    for $(p,z_1,z_2) \in P \times S^3$ and $(t_2,t_3) \in T_2$.
    Note that the $T_2$-action on each space is free, hence $\phi$ induces a diffeomorphism $\widetilde{\phi}$ between the quotient spaces
    $$ \widetilde{\phi} : \widetilde{M}(N,e,2\epsilon)_{-\epsilon, \epsilon} = \mu^{-1}(-\epsilon,\epsilon)/T_2 \rightarrow P \times_{T_2} S^3. $$
    Note that $\xi \cong P \times_{S^1} \C$ where $S^1$ acts on $P \times \C$ by $t \cdot (p,z) = (t\cdot p, t^{-1}z)$ so that we have $\xi \oplus \underline{\C} \cong (P \times_{S^1} \C) \times \C$. Then the projectivization $\mathbb{P}(\xi \oplus \underline{\C})$ is the quotient $(\xi \oplus \underline{\C}) / \C^*$ which is equivalent to $(P \times \C \times \C) / S^1 \times \C^*$ where $(t,w) \in S^1 \times \C^*$ acts on
    $(p,z_1,z_2) \in P \times \C \times \C$ by $$(t,w)\cdot(p,z_1,z_2) = (tp,t^{-1}wz_1,wz_2).$$
    Also, the quotient $(P \times \C \times \C) / S^1 \times \C^*$ is equivalent to the quotient $(P \times S^3) / S^1 \times S^1$ by the compact torus $S^1 \times S^1 \subset S^1 \times \C^*$. If we substitute $t$ with $tw$, then the action is exactly the same as the $T_2$-action on $P \times S^3$ in (3), which completes the proof of the first statement.

    For the second statement, if $(N,\sigma,J)$ is a compact K\"{a}hler manifold with $e \in H^{1,1}(N;\Z)$, then there exists another K\"{a}hler form $\sigma'$ on $N$ such that
    \begin{itemize}
        \item $(M(N,e,2\epsilon),\omega_{\sigma'})$ is K\"{a}hler, and
        \item $\sigma'$ is deformation equivalent to $\sigma$
    \end{itemize}
    by Proposition \ref{proposition : symplectization is Kaehler}. Hence the $T_2$-action on $ M(N,e,2\epsilon) \times \C^2$ is holomorphic and Hamiltonian with respect to the K\"{a}hler form $\omega_{\sigma'} + \frac{1}{2} \sum_{j=1}^2 \frac{1}{\sqrt{-1}} dz_j \wedge d\bar{z}_j$ so that the reduced symplectic form $\widetilde{\omega}_{\sigma'}$ is K\"{a}hler on $\widetilde{M}(N,e,2\epsilon)_{-\epsilon, \epsilon}$.
    Finally, we can conclude that $\widetilde{\omega}_{\sigma}$ is deformation equivalent to $\widetilde{\omega}_{\sigma'}$ by Lemma \ref{lemma : symplectic deformation} below.
\end{proof}

In Section 5, we will find a suitable compact K\"{a}hler surface $(N,\sigma,J)$ and $e \in H^{1,1}(N)$ with respect to $J$ such that our symplectic manifold $(\widetilde{M}(N,e,2\epsilon)_{-\epsilon, \epsilon}, \widetilde{\omega}_{\sigma})$ is also K\"{a}hler by Proposition \ref{proposition : sphere bundle}. Then $\widetilde{\omega}_{\sigma}$ satisfies the hard Lefschetz property automatically. But if we take another symplectic form $\tau$ on $N$ which is not K\"{a}hler with respect to $J$, then we will see in Section 4 and Section 5 that $\widetilde{\omega}_{\tau}$ may not satisfy the hard Lefschetz property, even the manifold has the same diffeomorphism type with the K\"{a}hler manifold $(N,\sigma,J)$.

Here is the final remark.
We can lift any symplectic deformation on $N$ to $\widetilde{M}(N,e,2\epsilon)_{-\epsilon, \epsilon}$ as follows.
\begin{lemma}\label{lemma : symplectic deformation}
    Let $N$ be a compact manifold, $\sigma$ and $\gamma$ be two symplectic forms on $N$ and $e \in H^2(N;\Z)$ be an integral class. Assume that $\epsilon > 0$ is chosen such that
    $\widetilde{\omega}_{\sigma}$ and $\widetilde{\omega}_{\gamma}$ are symplectic forms on $\widetilde{M}(N,e,2\epsilon)_{-\epsilon, \epsilon}$. If $\gamma$ is deformation equivalent to $\sigma$, then the induced symplectic form $\widetilde{\omega}_{\gamma}$ is also deformation equivalent to $\widetilde{\omega}_{\sigma}$.
\end{lemma}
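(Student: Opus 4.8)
The plan is to lift the given deformation on $N$ fiberwise through the construction of Section~2; the one subtlety is that $\epsilon$ is assumed to work for $\sigma$ and $\gamma$ but a priori not for the symplectic forms interpolating between them, and this has to be reconciled by a rescaling. Let $\{\sigma_t\}_{0\le t\le 1}$ be a path of symplectic forms on $N$ with $\sigma_0=\sigma$ and $\sigma_1=\gamma$. First I would note that at $r=0$ the form $\omega_{\sigma_t}=\pi^*\sigma_t+dr\wedge\theta$ is non-degenerate for every $t$ (it is block-diagonal, with $\sigma_t$ on the horizontal distribution and the standard pairing of $\partial_r$ with the connection direction), so, non-degeneracy being an open condition and $P\times\{0\}\times[0,1]$ compact, there is $\epsilon'\in(0,\epsilon]$ with $\omega_{\sigma_t}$ symplectic on $M(N,e,2\epsilon')$ for all $t$. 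Since the subtorus $T_2$, its moment map $\mu$, and the free level set $\mu^{-1}(-\epsilon',\epsilon')$ do not depend on $\sigma_t$, symplectic reduction produces a smooth family of symplectic forms $\widetilde{\omega}_{\sigma_t}$ on the single manifold $\widetilde{M}(N,e,2\epsilon')_{-\epsilon',\epsilon'}$, that is, a deformation there from $\widetilde{\omega}_\sigma$ to $\widetilde{\omega}_\gamma$.

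It then remains, for $\rho\in\{\sigma,\gamma\}$, to connect $\widetilde{\omega}_\rho$ on $\widetilde{M}(N,e,2\epsilon)_{-\epsilon,\epsilon}$ with $\widetilde{\omega}_\rho$ on $\widetilde{M}(N,e,2\epsilon')_{-\epsilon',\epsilon'}$, both regarded as symplectic forms on the fixed model $\mathbb{P}(\xi\oplus\underline{\mathbb{C}})$ via Proposition~\ref{proposition : sphere bundle}. For $\lambda>0$ the diffeomorphism $(p,r,z_1,z_2)\mapsto(p,\lambda r,\sqrt{\lambda}\,z_1,\sqrt{\lambda}\,z_2)$ from $M(N,e,2\epsilon/\lambda)\times\C^2$ onto $M(N,e,2\epsilon)\times\C^2$ is $T_2$-equivariant and multiplies the cut symplectic form $\omega_\rho+\tfrac12\sum_{j=1}^2\tfrac1{\sqrt{-1}}dz_j\wedge d\bar z_j$ and its $T_2$-moment map by $\lambda$ while replacing the base form $\rho$ by $\lambda\rho$ and the cut interval $(-\epsilon/\lambda,\epsilon/\lambda)$ by $(-\epsilon,\epsilon)$; passing to symplectic quotients it gives a symplectomorphism $(\widetilde{M}(N,e,2\epsilon)_{-\epsilon,\epsilon},\widetilde{\omega}_{\lambda\rho})\cong(\widetilde{M}(N,e,2\epsilon/\lambda)_{-\epsilon/\lambda,\epsilon/\lambda},\lambda\widetilde{\omega}_\rho)$ which, in the sphere model of Proposition~\ref{proposition : sphere bundle}, is the identity of $\mathbb{P}(\xi\oplus\underline{\mathbb{C}})$. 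With $\lambda=\epsilon/\epsilon'$ this exhibits $\widetilde{\omega}_\rho$ at parameter $\epsilon'$ as $\tfrac{\epsilon'}{\epsilon}\,\widetilde{\omega}_{(\epsilon/\epsilon')\rho}$ at parameter $\epsilon$, hence deformation equivalent to $\widetilde{\omega}_{(\epsilon/\epsilon')\rho}$ at parameter $\epsilon$ via the scaling path $c\mapsto c\,\widetilde{\omega}_{(\epsilon/\epsilon')\rho}$. Finally $c\rho$ is deformation equivalent to $\rho$ on $N$ for $c\in[1,\epsilon/\epsilon']$, and $\omega_{c\rho}$ is symplectic on $M(N,e,2\epsilon)$ for every such $c$ (after the substitution $r\mapsto cr$ it equals $c$ times $\omega_\rho$ on the smaller interval $|r|<2\epsilon/c$), so the argument of the first paragraph with $\epsilon$ itself as the uniform parameter makes $\widetilde{\omega}_{(\epsilon/\epsilon')\rho}$ deformation equivalent to $\widetilde{\omega}_\rho$ on $\widetilde{M}(N,e,2\epsilon)_{-\epsilon,\epsilon}$. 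Concatenating these deformations with the one from the first paragraph proves the lemma.

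The step I expect to be the main obstacle is the bookkeeping in the second paragraph: one must be certain that the various symplectic quotients, taken for different cut-off parameters, are compared through a single fixed identification with $\mathbb{P}(\xi\oplus\underline{\mathbb{C}})$ — so that what one actually produces is literally a path of symplectic forms on one manifold — and one must verify the compatibility of the symplectic cut with the rescaling diffeomorphism. Everything else is routine: the compactness argument for a uniform $\epsilon'$, the smooth dependence of the reduced form on parameters, the automatic non-degeneracy of reduced forms, and the concatenation of deformations.
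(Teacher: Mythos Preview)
Your core approach---lift the path $\{\sigma_t\}$ through the construction of Section~2 and reduce---is exactly what the paper does. The difference is that you correctly flag and repair a point the paper glosses over: the paper simply asserts that $\omega_{\sigma_t}=\pi^*\sigma_t+d(r\theta)$ is symplectic on all of $M(N,e,2\epsilon)$ for every $t$ and then reduces, but the stated hypothesis only guarantees this at $t=0,1$. Your first paragraph (compactness gives a uniform $\epsilon'\le\epsilon$ for the whole family) is the natural fix, and your second paragraph (the $T_2$-equivariant rescaling $(p,r,z_1,z_2)\mapsto(p,\lambda r,\sqrt{\lambda}\,z_1,\sqrt{\lambda}\,z_2)$, which descends to the identity on $\mathbb{P}(\xi\oplus\underline{\mathbb{C}})$ since the radial factor is swallowed by the $\mathbb{C}^*$-quotient) correctly bridges the $\epsilon'$- and $\epsilon$-constructions. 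The observation that $\omega_{c\rho}$ at $r$ equals $c\,\omega_\rho$ at $r/c$, so $c\ge 1$ can only help non-degeneracy on the fixed interval, is also right. So your argument is a strictly more careful version of the paper's, at the cost of the extra bookkeeping you already anticipated; in the paper's actual applications $\epsilon$ is in practice chosen after the deformation, so the issue never bites, but for the lemma as stated your version is the honest one.
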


\begin{proof}
    Let $\{ \sigma_t \}_{0 \leq t \leq 1}$ be a path of symplectic forms such that $\sigma_0 = \sigma$ and $\sigma_1 = \gamma$.
    Then $$\omega_{\sigma_t} = \pi^*\sigma_t + d(r\cdot \theta)$$ is a path of symplectic forms on $M(N,e,2\epsilon) = P \times (-2\epsilon, 2\epsilon)$ which connects $\omega_{\sigma}$ with $\omega_{\gamma}$. Then
    $$\omega_{\sigma_t} + \frac{1}{2}\sum_{i=1}^2 \frac{1}{\sqrt{-1}} dz_i \wedge d\bar{z_i}$$
    is a path of symplectic forms on $M(N,e,2\epsilon) \times \C^2$.
    Therefore $\widetilde{\omega}_{\sigma_t}$ is a path of reduced symplectic forms on $\mu^{-1}(-\epsilon,\epsilon) / T_2 = \widetilde{M}(N,e,2\epsilon)_{-\epsilon, \epsilon}$ which connects
    $\widetilde{\omega}_{\sigma}$ with $\widetilde{\omega}_{\gamma}$.

\end{proof}

\section{Equivariant cohomology theory for Hamiltonian circle actions}
    In this section, we briefly review the classical facts about equivariant cohomology theory for Hamiltonian circle actions which
    will be used in Section 4. Throughout this section, we assume that every coefficient of any cohomology theory is $\R$. Let $S^1$ be the unit
    circle group and let $M$ be an $S^1$-manifold. Then the equivariant cohomology ring $H^*_{S^1}(M)$ is defined by $$H^*_{S^1}(M) := H^*(M \times_{S^1} ES^1)$$
    where $ES^1$ is a contractible space on which $S^1$ acts freely. Let $BS^1 = ES^1 / S^1$ be the classfying space of $S^1$. Note that $H^*(BS^1; \R)$ is isomorphic to the polynomial ring $\R[u]$ where $u$ is a positive generator of degree two
    such that $\langle u, [\C P^1] \rangle = 1$ for $\C P^1 \subset \C P^2 \subset \cdots \subset \C P^\infty \cong BS^1.$

    Now, let $M^{S^1}$ be the fixed point set. Then the inclusion map $i : M^{S^1} \hookrightarrow M$ induces a ring homomorphism $$i^* :
    H^*_{S^1}(M) \rightarrow H^*_{S^1}(M^{S^1}) \cong \bigoplus_{F \subset M^{S^1}} H^*(F)\otimes H^*(BS^1)$$ and we call $i^*$ \textit{the restriction map to the fixed point set}.
    For any connected fixed component $F \in M^{S^1}$, the inclusion map $i_F : F \hookrightarrow M^{S^1}$ induces a natural projection
    $$i_F^* : H^*_{S^1}(M^{S^1}) \rightarrow H^*_{S^1}(F) \cong H^*(F)\otimes H^*(BS^1)$$
    and we denote by $\alpha|_F$ an image $i_F^* (i^*(\alpha))$ for each $\alpha \in H^*_{S^1}(M)$.

    Let $\alpha \in H^k_{S^1}(M)$ be a class of degree $k$. For each fixed component $F \subset M^{S^1}$, let $j$ be the smallest positive integer such that
                       $$\alpha|_F \in \bigoplus_{i=0}^j  H^*(F) \otimes H^j(BS^1).$$
    We call such a number $j$ \textbf{the $H^*(BS^1)$-degree} of $\alpha|_F$.
    \begin{remark}
        McDuff and Tolman \cite[page 8]{McT} called the $H^*(BS^1)$-degree of $\alpha|_F$ a \textbf{degree} of $\alpha|_F$. But the author thought that it might lead to confusion with the degree of $\alpha$ as a cohomology class. Hence in this paper, we use the term `\textbf{$H^*(BS^1)$-degree}' instead of the word `\textbf{degree}' to avoid confusion with the standard use of `\textbf{degree}' of a cohomology class.
    \end{remark}

    If $(M,\omega)$ is a symplectic manifold equipped with a Hamiltonian circle action, then the equivariant cohomology ring $H^*_{S^1}(M)$ has several remarkable properties as follows.

    \begin{theorem}\cite{Ki}\label{theorem : injectivity}
    Let $(M,\omega)$ be a closed symplectic manifold and $S^1$ acts on $(M,\omega)$ in a Hamiltonian fashion. Then the restriction map
    $$i^* : H^*_{S^1}(M) \rightarrow H^*_{S^1}(M^{S^1})$$ is injective.
    \end{theorem}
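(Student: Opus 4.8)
The statement is Kirwan's injectivity theorem, and the plan is to deduce it from equivariant Morse theory for the moment map, following Kirwan and Atiyah--Bott (see also Ginzburg--Guillemin--Karshon). Fix a moment map $H \colon M \to \R$. Since $M$ is compact, $H$ is a Morse--Bott function whose critical set is precisely $M^{S^1}$, with finitely many critical values $c_1 < c_2 < \cdots < c_N$. The first point to record is that every Morse--Bott index is even: for a fixed component $F$ the linearized $S^1$-action on the normal bundle $\nu_F$ decomposes it into a sum of isotropy weight subbundles with nonzero weights, each carrying an induced complex structure, so the negative normal bundle $\nu_F^-$ is a complex vector bundle and the index $\lambda_F$ equals $2\dim_{\C}\nu_F^-$.

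The second, and algebraically crucial, point is that the $S^1$-equivariant Euler class $e_{S^1}(\nu_F^-) \in H^*_{S^1}(F) \cong H^*(F)\otimes \R[u]$ is not a zero divisor. Writing $\nu_F^- = \bigoplus_m E_m$, where $S^1$ acts on $E_m$ with weight $m \neq 0$, one sees that the coefficient of the top power $u^{\lambda_F/2}$ in $e_{S^1}(\nu_F^-)$ is the nonzero scalar $\prod_m m^{\dim_{\C}E_m}$; a polynomial in $u$ with coefficients in $H^*(F)$ whose leading coefficient is a unit cannot be a zero divisor.

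With these preliminaries I would argue by induction. Put $M_i := H^{-1}\bigl((-\infty,\, c_i + \delta]\bigr)$ for a small $\delta > 0$, so that $M_0 = \emptyset$ and $M_N = M$; write $Z_i$ for the (disjoint) union of the fixed components at level $c_i$, so that $M_i^{S^1} = M_{i-1}^{S^1} \sqcup Z_i$, and $\nu_i^-$ for its negative normal bundle. As $c$ crosses $c_i$ the space $M_i$ is, up to $S^1$-homotopy, obtained from $M_{i-1}$ by attaching the disk bundle of $\nu_i^-$ along its sphere bundle, so the equivariant Thom isomorphism gives $H^*_{S^1}(M_i, M_{i-1}) \cong H^{*-\lambda_i}_{S^1}(Z_i)$, and the composite $H^*_{S^1}(M_i, M_{i-1}) \to H^*_{S^1}(M_i) \to H^*_{S^1}(Z_i)$ is, under this identification, multiplication by $e_{S^1}(\nu_i^-)$, hence injective. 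Therefore the long exact sequence of the pair $(M_i, M_{i-1})$ breaks up into short exact sequences
$$0 \longrightarrow H^*_{S^1}(M_i, M_{i-1}) \longrightarrow H^*_{S^1}(M_i) \xrightarrow{\,r\,} H^*_{S^1}(M_{i-1}) \longrightarrow 0,$$
where $r$ is restriction. Now assume inductively that $H^*_{S^1}(M_{i-1}) \to H^*_{S^1}(M_{i-1}^{S^1})$ is injective, and let $\alpha \in H^*_{S^1}(M_i)$ satisfy $\alpha|_{M_i^{S^1}} = 0$, that is, $\alpha|_{M_{i-1}^{S^1}} = 0$ and $\alpha|_{Z_i} = 0$. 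The first equality forces $r(\alpha)|_{M_{i-1}^{S^1}} = 0$, hence $r(\alpha) = 0$ by the inductive hypothesis, so $\alpha$ is the image of some $\widetilde\alpha \in H^{*-\lambda_i}_{S^1}(Z_i)$; the second equality then reads $e_{S^1}(\nu_i^-)\cup \widetilde\alpha = 0$, whence $\widetilde\alpha = 0$ because the Euler class is not a zero divisor, and so $\alpha = 0$. Taking $i = N$, with $M_0 = \emptyset$ as the base case, proves that $i^* \colon H^*_{S^1}(M) \to H^*_{S^1}(M^{S^1})$ is injective.

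The main obstacle here is the bookkeeping of equivariant Morse theory rather than any deep idea: one must verify that $H$ is Morse--Bott with critical set $M^{S^1}$ (standard for moment maps on compact symplectic manifolds), set up the equivariant Thom isomorphism and identify the relevant composite in the long exact sequence with cup product by the equivariant Euler class, and keep track of several fixed components that may share a critical value (which is why $Z_i$ is taken to be their disjoint union and the argument is phrased with direct sums). Once this is in place, the evenness of the indices and the non-zero-divisor property of $e_{S^1}(\nu_F^-)$ make the induction go through cleanly.
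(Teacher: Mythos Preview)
Your argument is the standard equivariant Morse--theoretic proof of Kirwan's injectivity theorem and is correct as written: the key inputs (evenness of indices, the equivariant Euler class being a non-zero-divisor because its top $u$-coefficient is a unit, and the Thom--Gysin long exact sequence splitting into short exact pieces) are all identified and used correctly.

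There is nothing to compare against, however, because the paper does not prove this statement at all. Theorem~\ref{theorem : injectivity} is simply quoted from Kirwan's book \cite{Ki} as background in Section~3 and is used as a black box (indeed, it is only invoked implicitly, via the existence of the canonical classes of Theorem~\ref{theorem : canonical basis}). So your proposal supplies strictly more than the paper does on this point; it is exactly the argument one would find in \cite{Ki} or in the Atiyah--Bott framework you cite.
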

    Theorem \ref{theorem : injectivity} is called the \textit{Kirwan's injectivity theorem}, and it tells us that any class $\alpha \in H^*_{S^1}(M)$ is uniquely determined by its image of $i^*$.

    \begin{theorem}\cite{Ki}\label{theorem : formality}
        Let $(M,\omega)$ be a smooth compact symplectic manifold with a Hamiltonian circle action. Then $M$ is equivariantly formal, i.e., the Leray-Serre spectral sequence associated to the fibration $M \times_{S^1} ES^1 \rightarrow BS^1$ collapses at the $E_1$ stage.
    \end{theorem}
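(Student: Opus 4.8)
The plan is to realize a moment map as an equivariantly perfect Morse--Bott function and then squeeze Poincar\'e series on the two ends of the Leray--Serre spectral sequence. Throughout, write $P_t(Y):=\sum_i t^i\dim_{\R}H^i(Y)$ and $P_t^{S^1}(Y):=\sum_i t^i\dim_{\R}H^i_{S^1}(Y)$, and recall $P_t(BS^1)=1/(1-t^2)$. Recall also that collapse of the spectral sequence of $M\times_{S^1}ES^1\to BS^1$ at $E_1$ is equivalent to $P_t^{S^1}(M)=P_t(M)/(1-t^2)$, since the $E_1$-page is $H^*(BS^1)\otimes H^*(M)$ (the local system is trivial as $BS^1$ is simply connected, and with the standard cell structure on $BS^1=\CP^\infty$ the $E_1$ and $E_2$ pages coincide), so in general $P_t^{S^1}(M)\le P_t(M)/(1-t^2)$ coefficientwise with equality exactly when all higher differentials vanish.

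First I would recall the Morse theory of a moment map $H:M\to\R$: by the classical results of Frankel and Atiyah, $H$ is Morse--Bott with critical set exactly $M^{S^1}$. Along each fixed component $F$, the negative normal bundle $\nu_F^-$ is the subbundle of the normal bundle on which $S^1$ acts with nonzero weights, so it carries a complex structure; hence its real rank, the Morse--Bott index $\lambda_F$, is even, and $\nu_F^-$ splits $S^1$-equivariantly as $\bigoplus_i L_i$ with $L_i$ a complex line bundle of nonzero weight $m_i$. The heart of the argument is then that $H$ is \emph{equivariantly perfect}:
$$P_t^{S^1}(M)=\sum_{F\subset M^{S^1}}t^{\lambda_F}P_t^{S^1}(F).$$
I would prove this by induction over the finitely many critical values, with $M_c:=H^{-1}((-\infty,c])$. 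Crossing a critical value, equivariant excision and the Thom isomorphism identify $H^*_{S^1}(M_{c+\delta},M_{c-\delta})$ with $\bigoplus_F H^{*-\lambda_F}_{S^1}(F)$ (sum over the components at that level), and the connecting map of the pair is multiplication by the equivariant Euler class $e^{S^1}(\nu_F^-)$. In $H^*_{S^1}(F)\cong H^*(F)\otimes\R[u]=H^*(F)[u]$ one has $e^{S^1}(\nu_F^-)=\prod_i\bigl(e(L_i)+m_iu\bigr)$, whose top-degree-in-$u$ term is $\bigl(\prod_i m_i\bigr)u^{\lambda_F/2}$ with $\prod_i m_i\neq 0$; being a polynomial in $u$ with unit leading coefficient, it is a non-zero-divisor, so the long exact sequence of the pair breaks into short exact sequences. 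Summing Poincar\'e series over all critical levels yields the displayed identity.

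Since $S^1$ acts trivially on each $F$, we have $H^*_{S^1}(F)\cong H^*(F)\otimes\R[u]$, hence $P_t^{S^1}(F)=P_t(F)/(1-t^2)$ and therefore $P_t^{S^1}(M)=\tfrac{1}{1-t^2}\sum_F t^{\lambda_F}P_t(F)$. On the other hand, the ordinary (weak) Morse--Bott inequalities for $H$ give $\sum_F t^{\lambda_F}P_t(F)\ge P_t(M)$ coefficientwise. Combining these with the spectral-sequence bound,
$$\frac{P_t(M)}{1-t^2}\;\le\;\frac{1}{1-t^2}\sum_F t^{\lambda_F}P_t(F)\;=\;P_t^{S^1}(M)\;\le\;\frac{P_t(M)}{1-t^2},$$
so every inequality is an equality; in particular $P_t^{S^1}(M)=P_t(M)/(1-t^2)$, which is exactly the collapse at $E_1$, i.e.\ equivariant formality.

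The main obstacle is the equivariant perfection step, and within it the splitting of the long exact sequence of the pair $(M_{c+\delta},M_{c-\delta})$. This is where the Hamiltonian hypothesis is genuinely used: it forces $H$ to be Morse--Bott with critical set $M^{S^1}$ and each negative normal bundle to be complex, hence oriented with equivariant Euler class of the form above, which is precisely the Atiyah--Bott/Kirwan criterion for equivariant perfection. The remaining steps are routine bookkeeping with spectral sequences and Poincar\'e series.
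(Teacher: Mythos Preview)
The paper does not supply its own proof of this theorem: it is quoted from \cite{Ki} and used as a black box, so there is no in-paper argument to compare against. Your proposal is the standard Atiyah--Bott/Kirwan route (moment map is Morse--Bott, equivariant Euler classes of negative normal bundles are non-zero-divisors, hence equivariant perfection, hence the Poincar\'e-series squeeze), and it is essentially correct.

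Two small slips worth cleaning up. First, your description ``$\nu_F^-$ is the subbundle of the normal bundle on which $S^1$ acts with nonzero weights'' is inaccurate: \emph{all} of the normal bundle at a fixed component carries nonzero weights; $\nu_F^-$ is the sum of the weight subbundles with \emph{negative} weights (those are the directions along which $H$ decreases). Second, what you call ``the connecting map of the pair is multiplication by $e^{S^1}(\nu_F^-)$'' is really the composite
\[
H^*_{S^1}(F)\xrightarrow{\ \text{Thom}\ }H^{*+\lambda_F}_{S^1}(M_{c+\delta},M_{c-\delta})\longrightarrow H^{*+\lambda_F}_{S^1}(M_{c+\delta})\xrightarrow{\ i_F^*\ }H^{*+\lambda_F}_{S^1}(F),
\]
which equals cup product with $e^{S^1}(\nu_F^-)$ by the self-intersection formula; its injectivity (your non-zero-divisor computation) forces the map $H^*_{S^1}(M_{c+\delta},M_{c-\delta})\to H^*_{S^1}(M_{c+\delta})$ to be injective, splitting the long exact sequence. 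With these corrections your argument goes through and reproduces Kirwan's proof.
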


    Note that $M \times_{S^1} ES^1$ has an $M$-bundle structure over $BS^1$ so that $H^*_{S^1}(M)$ has an $H^*(BS^1)$-module structure. Here is another description of \textit{equivariant formality} of $H^*_{S^1}(M)$ as follows.

    \begin{theorem}\cite{Ki}\label{theorem : formality2}
        $M$ is equivariantly formal if and only if $H^*_{S^1}(M)$ is a free $H^*(BS^1)$-module. Also, $M$ is equivariantly formal if and only if for the inclusion of a fiber $f : M \hookrightarrow M \times_{S^1} ES^1$, the induced ring homomorphism $f^* : H^*_{S^1}(M) \rightarrow H^*(M)$ is surjective. The kernel of $f^*$ is given by $u \cdot H^*_{S^1}(M)$ where $\cdot$ means the scalar multiplication for the $H^*(BS^1)$-module structure on $H^*_{S^1}(M)$.
    \end{theorem}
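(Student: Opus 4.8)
\section*{Proof proposal for Theorem \ref{theorem : formality2}}

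The plan is to deduce all three assertions from two standard tools applied to the Borel construction: the Leray--Serre spectral sequence of the fibration $M \hookrightarrow M \times_{S^1} ES^1 \to BS^1$, and the Gysin sequence of the circle bundle $M \simeq ES^1 \times M \to M \times_{S^1} ES^1$, whose Euler class is the generator $u \in H^2(BS^1) = H^2_{S^1}(\mathrm{pt}) \subset H^2_{S^1}(M)$. First I would record the shape of the spectral sequence: since $BS^1 \cong \CP^\infty$ is simply connected and we use $\R$-coefficients, the $E_2$-page is the bigraded ring $E_2^{p,q} = H^p(BS^1)\otimes H^q(M)$, so that $E_2^{*,*} \cong \R[u]\otimes H^*(M)$ and, as a ring, $E_2^{*,*}$ is generated by $E_2^{2,0} = \R u$ together with $E_2^{0,*} = H^*(M)$. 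The class $u$ is pulled back from the base, hence is a permanent cycle (for degree reasons each differential $d_r$, $r\geq 2$, out of $E_r^{2,0}$ lands in a group of negative fibre degree). By definition, equivariant formality (Theorem \ref{theorem : formality}) is the statement that this spectral sequence degenerates.

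Next I would treat the equivalence ``$M$ equivariantly formal $\iff f^* : H^*_{S^1}(M)\to H^*(M)$ surjective''. The point is that $f^*$ is precisely the fibre edge homomorphism, the composite $H^*_{S^1}(M) \twoheadrightarrow E_\infty^{0,*} \hookrightarrow E_2^{0,*} = H^*(M)$. If the spectral sequence degenerates then $E_\infty^{0,*} = E_2^{0,*}$ and $f^*$ is onto. Conversely, if $f^*$ is onto then $E_\infty^{0,*} = E_2^{0,*}$, which forces every class of $E_2^{0,*}$ to be a permanent cycle; since $u$ is also a permanent cycle and these classes generate $E_2^{*,*}$ as a ring, the Leibniz rule makes every element of $E_2^{*,*}$ a permanent cycle, so $d_r = 0$ for all $r\geq 2$ and the sequence degenerates.

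For the remaining statements I would invoke the Gysin sequence of the bundle above, namely
$$\cdots \longrightarrow H^{k-2}_{S^1}(M) \xrightarrow{\ \cup\, u\ } H^{k}_{S^1}(M) \xrightarrow{\ f^{*}\ } H^{k}(M) \xrightarrow{\ \delta\ } H^{k-1}_{S^1}(M) \xrightarrow{\ \cup\, u\ } H^{k+1}_{S^1}(M) \longrightarrow \cdots .$$
Exactness at $H^{k}_{S^1}(M)$ gives at once $\ker f^* = \mathrm{im}(\cup\, u) = u\cdot H^*_{S^1}(M)$, which is the third assertion (and needs no formality hypothesis). For the first assertion, exactness shows that $f^*$ is surjective in all degrees $\iff \delta = 0 \iff \cup\, u : H^*_{S^1}(M)\to H^*_{S^1}(M)$ is injective $\iff H^*_{S^1}(M)$ is a torsion-free $\R[u]$-module; and since $M$ is compact, $H^*_{S^1}(M)$ is a finitely generated graded module over the graded principal ideal domain $\R[u]$ ($\deg u = 2$), so torsion-freeness is equivalent to freeness. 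Combining this with the previous paragraph yields ``$M$ equivariantly formal $\iff H^*_{S^1}(M)$ is a free $H^*(BS^1)$-module''. (Alternatively, the direction ``formal $\Rightarrow$ free'' can be seen directly: degeneration gives an isomorphism of $\R[u]$-modules $\mathrm{gr}\, H^*_{S^1}(M) \cong E_\infty^{*,*} = \R[u]\otimes H^*(M)$, and lifting a homogeneous $\R$-basis of $H^*(M)$ to $H^*_{S^1}(M)$ produces an $\R[u]$-basis by an induction on filtration degree.)

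The step I expect to need the most care is the commutative-algebra input used in the last paragraph --- verifying that $H^*_{S^1}(M)$ is finitely generated over $\R[u]$ and that the graded analogue of ``finitely generated and torsion-free implies free'' applies --- together with the bookkeeping that identifies $f^*$ with the fibre edge homomorphism and identifies the Euler class of $ES^1\times M \to M\times_{S^1}ES^1$ with $u$, so that the spectral-sequence picture and the Gysin picture are genuinely consistent. The rest is routine, resting only on multiplicativity of the Leray--Serre spectral sequence and exactness of the Gysin sequence.
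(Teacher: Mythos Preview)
The paper does not prove this theorem: it is stated with a citation to Kirwan \cite{Ki} and used as a black box, with no argument supplied. So there is no ``paper's own proof'' to compare against.

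Your proposal is a correct and standard proof of the result. The identification of $f^*$ with the fibre edge homomorphism and the use of multiplicativity of the Leray--Serre spectral sequence to pass from ``$f^*$ surjective'' to ``degeneration'' are exactly the right moves, and the Gysin sequence of the principal $S^1$-bundle $M\times ES^1 \to M\times_{S^1}ES^1$ cleanly gives both $\ker f^* = u\cdot H^*_{S^1}(M)$ and the equivalence of surjectivity of $f^*$ with $u$-torsion-freeness. Your own caveat is on target: the only points requiring a sentence of care are (i) that the Euler class of this bundle really is the image of $u$ under the map induced by $M\times_{S^1}ES^1 \to BS^1$, and (ii) the finite generation of $H^*_{S^1}(M)$ over $\R[u]$, which follows since $M$ is compact so the $E_2$-page $\R[u]\otimes H^*(M)$ is already finitely generated, and hence so is any graded module admitting a finite filtration with subquotients of $E_2$ as associated graded. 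Once those are in place, the graded structure theorem over $\R[u]$ finishes the argument.
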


    Now, let us focus on our situation. Assume that $(M,\omega)$ is a $2n$-dimensional smooth compact symplectic manifold equipped with a Hamiltonian circle action with a moment map $H : M \rightarrow \R$. It is well-known \cite{Au} that $H$ is a Morse-Bott function and for every critical point of $H$, a stable (unstable, respectively) submanifold is defined with respect to the gradient vector field of $H$.
    For each fixed component $F \subset M^{S^1}$, let $\nu_{F}$ be a normal bundle of $F$ in $M$. Then the \textit{negative normal bundle} $\nu^{-}_{F}$ of $F$ can be defined as
    a sub-bundle of $\nu_F$ whose fiber over $p \in F$ is a subspace of $T_p M$ tangent to the unstable submanifold of $M$ at $F$ for every $p \in F$. We denote by $e^{-}_F \in
    H^*_{S^1}(F)$ the equivariant Euler class of $\nu^{-}_F$.
    Since $H^*_{S^1}(M)$ is a free $H^*(BS^1)$-module by Theorem \ref{theorem : formality2}, every cohomology class $\alpha \in H^k_{S^1}(M)$ can be uniquely expressed by
    $$ \alpha = \alpha_k \otimes 1 + \alpha_{k-2} \otimes u + \cdots \in H^k_{S^1}(M) \cong H^k(M) \otimes H^0(BS^1) \oplus \cdots$$  and it satisfies $f^*(\alpha) = \alpha_k$ where $f^*$ is the restriction to the fiber $M$ described in Theorem \ref{theorem : formality2}.
    In \cite{McT}, McDuff and Tolman found a basis of $H^*_{S^1}(M)$ whose elements are easily understood in terms of their restriction images to the fixed point set as follows.

    \begin{theorem}\cite{McT}\label{theorem : canonical basis}
        Let $(M,\omega)$ be a closed symplectic manifold equipped with a Hamiltonian circle action with a
        moment map $H : M \rightarrow \R$. For each connected fixed component $F \subset M^{S^1}$, let $k_F$ be a Morse index of $F$
        with respect to $H$. For a given any cohomology class $Y \in H^i(F)$, there exists a
        unique class $\widetilde{Y} \in H_{S^1}^{i + k_F}(M)$
        such that
        \begin{enumerate}
            \item $\widetilde{Y}|_{F'} = 0$ for every $F' \in M^{S^1}$ with $H(F') < H(F)$,
            \item $\widetilde{Y}|_F = Y \cup e^{-}_F$ with $Y = Y \otimes 1 \in H^*_{S^1}(F)$, and
            \item the $H^*(BS^1)$-degree of $\widetilde{Y}|_{F'} \in H^*_{S^1}(F')$ is less than the index $k_{F'}$ of $F'$ for all fixed components $F' \neq F.$
        \end{enumerate}
        We call such a class $\widetilde{Y}$ \textbf{the canonical class with respect to} $Y$. If we fix a basis $S_F$ of an $\R$-vector space $H^*(F)$ for each fixed compoent $F \subset M^{S^1}$, then the set
        $$\frak{B} = \{ \widetilde{Y} | Y \in \cup_{F \subset M^{S^1}} S_F \}$$ is a basis of $H^*_{S^1}(M)$ as an $H^*(BS^1)$-module. Moreover, $f^*(\frak{B})$ is a basis of $H^*(M)$ as an $\R$-vector space.
    \end{theorem}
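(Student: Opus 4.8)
This is Theorem~\ref{theorem : canonical basis} (McDuff--Tolman). Let me sketch the proof.

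The plan is to construct the classes $\widetilde{Y}$ by induction along the filtration of $M$ by sublevel sets of the moment map, and then to read off the basis statement from a Poincar\'{e} series count. Order the fixed components $F_{1},\dots ,F_{N}$ so that $H(F_{1})\le\cdots\le H(F_{N})$; for simplicity I assume the critical values $c_{i}=H(F_{i})$ are distinct, the general case requiring only that all components on one level be attached simultaneously. Put $M_{l}=H^{-1}\big((-\infty ,c_{l}+\varepsilon]\big)$ for small $\varepsilon>0$, so $M_{0}=\varnothing$, $M_{N}=M$, and $M_{l}$ is obtained from $M_{l-1}$ by attaching the equivariant negative disk bundle $D(\nu_{F_{l}}^{-})$ along its sphere bundle, which lies inside $M_{l-1}$. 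The one nontrivial input — the step I expect to take the most care — is the degeneration of the Thom--Gysin long exact sequence of $(M_{l},M_{l-1})$ into short exact sequences
$$0\longrightarrow H^{\ast-k_{F_{l}}}_{S^{1}}(F_{l})\xrightarrow{\ \mathrm{push}_{F_{l}}\ }H^{\ast}_{S^{1}}(M_{l})\xrightarrow{\ r_{l}\ }H^{\ast}_{S^{1}}(M_{l-1})\longrightarrow 0 .$$
This is the Atiyah--Bott--Kirwan lemma: each fixed component has even Morse index, the weights of $\nu^{-}_{F_{l}}$ are nonzero, so the top $u$-coefficient of $e^{-}_{F_{l}}\in H^{\ast}_{S^{1}}(F_{l})\cong H^{\ast}(F_{l})\otimes\R[u]$ is the nonzero real number $\prod(\text{weights})$; hence $e^{-}_{F_{l}}$ is not a zero divisor, the composite of $\mathrm{push}_{F_{l}}$ with restriction to $F_{l}$ is multiplication by $e^{-}_{F_{l}}$, so $\mathrm{push}_{F_{l}}$ is injective and all connecting maps vanish. (These sequences, being extensions of free $\R[u]$-modules with free submodule, also re-establish equivariant formality, Theorem~\ref{theorem : formality2}.)

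\emph{Existence.} Given $Y\in H^{i}(F_{j})$, I would start on $M_{j}$ with $\widetilde{Y}^{(j)}:=\mathrm{push}_{F_{j}}(Y\otimes 1)$. Lying in $\ker r_{j}$, it restricts to $0$ on $M_{j-1}$, hence on every $F'$ with $H(F')<H(F_{j})$ (condition (1)), while restriction to $F_{j}$ gives $\widetilde{Y}^{(j)}|_{F_{j}}=Y\cup e^{-}_{F_{j}}$ (condition (2)); condition (3) is vacuous on $M_{j}$. Inductively, suppose $\widetilde{Y}^{(l)}\in H^{\ast}_{S^{1}}(M_{l})$ satisfies (1)--(3) for the components in $M_{l}$, and choose any $\widehat{Y}\in H^{\ast}_{S^{1}}(M_{l+1})$ with $r_{l+1}(\widehat{Y})=\widetilde{Y}^{(l)}$. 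Since $e^{-}_{F_{l+1}}$ is monic up to a unit as a polynomial in $u$ over $H^{\ast}(F_{l+1})$, division with remainder gives $\widehat{Y}|_{F_{l+1}}=Q\cdot e^{-}_{F_{l+1}}+R$ with the $H^{\ast}(BS^{1})$-degree of $R$ less than $k_{F_{l+1}}$. Set $\widetilde{Y}^{(l+1)}:=\widehat{Y}-\mathrm{push}_{F_{l+1}}(Q)$; because $\mathrm{push}_{F_{l+1}}(Q)\in\ker r_{l+1}$ it still restricts to $\widetilde{Y}^{(l)}$ on $M_{l}$, so (1), (2), (3) persist there, whereas $\widetilde{Y}^{(l+1)}|_{F_{l+1}}=R$ is condition (3) at $F_{l+1}$. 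Iterating up to $l=N$ produces $\widetilde{Y}:=\widetilde{Y}^{(N)}$ with the three required properties.

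\emph{Uniqueness.} If $\beta$ is the difference of two candidates, then $\beta|_{F_{i}}=0$ whenever $H(F_{i})\le H(F_{j})$ and the $H^{\ast}(BS^{1})$-degree of $\beta|_{F'}$ is less than $k_{F'}$ for every $F'$. I would prove $\beta=0$ by induction on $l$: $\beta|_{M_{1}}=0$ because $M_{1}$ retracts onto $F_{1}$; and if $\beta|_{M_{l-1}}=0$ then, by the short exact sequence, $\beta|_{M_{l}}=\mathrm{push}_{F_{l}}(Z)$ for some $Z$, so $\beta|_{F_{l}}=Z\cdot e^{-}_{F_{l}}$, whose $H^{\ast}(BS^{1})$-degree is at least $k_{F_{l}}$ unless $Z=0$; the hypothesis forces $Z=0$, hence $\beta|_{M_{l}}=0$. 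Therefore $\beta=\beta|_{M_{N}}=0$ (incidentally recovering Theorem~\ref{theorem : injectivity} for these sublevel sets).

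\emph{Basis.} The same cancellation gives $H^{\ast}(BS^{1})$-linear independence of $\frak{B}$: in a relation $\sum_{Y}p_{Y}(u)\,\widetilde{Y}=0$, restrict to the lowest component $F_{i_{0}}$ with some $p_{Y}\neq 0$; lower components contribute $0$ by choice of $i_{0}$ and higher ones by condition (1), leaving $\big(\sum_{Y\in S_{F_{i_{0}}}}p_{Y}(u)\,Y\big)\cup e^{-}_{F_{i_{0}}}=0$; cancelling $e^{-}_{F_{i_{0}}}$ and using that $S_{F_{i_{0}}}$ is an $\R[u]$-basis of $H^{\ast}(F_{i_{0}})\otimes\R[u]$ forces every $p_{Y}=0$. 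Adding Poincar\'{e} series over the short exact sequences shows that both $H^{\ast}_{S^{1}}(M)$ and the free submodule $\langle\frak{B}\rangle$ have Poincar\'{e} series $\tfrac{1}{1-t^{2}}\sum_{F}t^{k_{F}}P_{H^{\ast}(F)}(t)$, so $\langle\frak{B}\rangle=H^{\ast}_{S^{1}}(M)$ and $\frak{B}$ is an $H^{\ast}(BS^{1})$-basis. Finally $f^{\ast}$ is surjective with $f^{\ast}u=0$ (Theorem~\ref{theorem : formality2}), so $f^{\ast}(\frak{B})$ spans $H^{\ast}(M)$ over $\R$; since $|f^{\ast}(\frak{B})|\le|\frak{B}|=\dim_{\R}H^{\ast}(M)$ (the last equality again from the short exact sequences and Theorem~\ref{theorem : formality2}), $f^{\ast}(\frak{B})$ is an $\R$-basis. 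The only real obstacle in all of this is the degeneration displayed in the first paragraph; once it is in hand everything else is an elementary induction whose sole content is the division-with-remainder step and the bookkeeping that subtracting $\mathrm{push}_{F_{l+1}}(Q)$ does not disturb the strata already arranged.
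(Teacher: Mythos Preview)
The paper does not prove this theorem at all: it is stated as a result quoted from \cite{McT} and used as a black box, so there is no proof in the paper to compare against. Your sketch is a correct outline of the standard Atiyah--Bott/Kirwan--McDuff--Tolman argument (filtration by sublevel sets, injectivity of the Thom--Gysin pushforward because $e^{-}_{F}$ is a non-zero-divisor, inductive correction by division with remainder, and the Poincar\'{e}-series count for the basis claim), which is essentially how the cited reference proceeds; nothing further is needed here.
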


    Using Theorem \ref{theorem : canonical basis}, we can check the hard Lefschetz property of $(M^{2n},\omega)$ via the Atiyah-Bott-Berline-Vergne localization theorem as follows.
    Denote by $HR_k : H^k(M) \times H^k(M) \rightarrow \R$ the Hodge-Riemann form defined by
    $$ HR_k(\alpha, \beta) = \langle \alpha\beta[\omega]^{n-k}, [M] \rangle $$ where $\alpha, \beta \in H^k(M)$ and $[M] \in H_{2n}(M;\Z)$ is the fundamental homology class of $M$. Then $(M,\omega)$ satisfies the hard Lefschetz property if and only if $HR_k$ is non-singular for every $k=0,1 \cdots, n.$ For each fixed component $F \subset M^{S^1}$, fix an $\R$-basis $S_F \subset H^*(F)$ and let $\frak{B} = \{\widetilde{Y} | Y \in \cup_{F \subset M^{S^1}} S_F\}$ the set of canonical classes with respect to the elements in $\cup_{F \subset M^{S^1}} S_F$. We denote by $\frak{B}_k := \frak{B} \cap H^k_{S^1}(M)$ and let $f^*(\frak{B}_k) = \{Y_1^k, \cdots, Y_{b_k}^k\} \subset H^k(M)$ where $b_k = b_k(M)$ is the $k$-th Betti number of $M$.
    Since $f^*(\frak{B}_k)$ is a basis of $H^k(M)$ by Theorem \ref{theorem : canonical basis}, it is obvious that the number of elements in $\frak{B}_k$ is $b_k$ and the set $\{Y_1^k, \cdots, Y_{b_k}^k\}$ are linearly independent. Therefore, $HR_k$ is non-singular if and only if the following matrix
    \begin{equation}\label{equation : Hodge-Riemann bilinear form}
        HR_k(\frak{B}_k) := (HR_k(Y_i^k, Y_j^k))_{i,j}
    \end{equation}
    is non-singular.
    To compute each component $HR_k(Y_i^k, Y_j^k)$, we use the Atiyah-Bott-Berline-Vergne localization theorem for circle actions as follows.

    \begin{theorem}[Atiyah-Bott-Berline-Vergne localization theorem]\label{localization}
        Let $M$ be a compact manifold with a circle action. Let $\alpha \in H^*_{S^1}(M;\R)$. Then as an element of $\R(u)$, we have
    $$\int_M \alpha = \sum_{F \in M^{S^1}} \int_F \frac{\alpha|_F}{e_F}$$ where the sum is taken over all fixed points, and $e_F$ is the equivariant Euler class of the normal bundle to $F$.
    \end{theorem}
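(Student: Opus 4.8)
The statement is the classical Atiyah--Bott--Berline--Vergne formula, and the plan is to derive it from the Borel--Atiyah--Bott localization theorem together with functoriality of equivariant integration. Write $R = H^*_{S^1}(\mathrm{pt}) = \R[u]$, let $\mathcal{Q} = \R(u)$ be its field of fractions, and tensor every equivariant cohomology group with $\mathcal{Q}$ over $R$ from now on. Let $i : M^{S^1}\hookrightarrow M$ be the inclusion of the fixed set, a disjoint union of closed $S^1$-invariant submanifolds $F$, write $\nu_F$ for the normal bundle of $F$ with equivariant Euler class $e_F$, and let $i_*$ denote the equivariant Gysin map.

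First I would prove the localization statement: $i^* : H^*_{S^1}(M)\otimes\mathcal{Q} \to H^*_{S^1}(M^{S^1})\otimes\mathcal{Q}$ is an isomorphism. The key point is that $H^*_{S^1}(M\setminus M^{S^1})$ is annihilated by a power of $u$: by the slice theorem the action on $M\setminus M^{S^1}$ has only finite stabilizers, so on each member of a finite invariant open cover the equivariant cohomology is rationally that of the (finite-dimensional) orbit space and hence vanishes in high degrees, and a Mayer--Vietoris induction over the cover propagates this to all of $M\setminus M^{S^1}$. Feeding this into the long exact sequence of the pair $(M, M\setminus M^{S^1})$ --- whose relative term is $\bigoplus_F H^*_{S^1}(F)$ up to a degree shift and twist, by excision and the equivariant Thom isomorphism --- shows that $i^*$ becomes an isomorphism after inverting $u$.

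Next I would invoke the self-intersection formula $i^*\circ i_* = $ multiplication by $e = \bigoplus_F e_F$. Over $\mathcal{Q}$ each $e_F$ is invertible in $H^*(F)\otimes\mathcal{Q}$, since its top-order term in $u$ is $\big(\prod_j w_j\big)u^{c_F}$ with $w_j$ the nonzero rotation weights of $\nu_F$ and $2c_F = \mathrm{rank}\,\nu_F$, a nonzero element of $\mathcal{Q}$, while the remaining terms are nilpotent. Hence $i_*\otimes\mathcal{Q}$ is an isomorphism as well, so for $\alpha\in H^*_{S^1}(M)$ there is a unique $\gamma$ with $\alpha = i_*\gamma$; restricting gives $i^*\alpha = e\gamma$, i.e. $\gamma|_F = \alpha|_F / e_F$ on each $F$. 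Finally, writing $p_M : M\to\mathrm{pt}$ and $p_F : M^{S^1}\to\mathrm{pt}$ for the constant maps, $p_M\circ i = p_F$, so $(p_M)_*\circ i_* = (p_F)_*$ and therefore
$$\int_M\alpha = \int_M i_*\gamma = \int_{M^{S^1}}\gamma = \sum_{F}\int_F \frac{\alpha|_F}{e_F},$$
which is the claim; since $\int_M\alpha$ a priori lies in $R$, the right-hand side does too.

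The main obstacle is the first step, namely that inverting $u$ kills $H^*_{S^1}(M\setminus M^{S^1})$ and thereby turns restriction to the fixed set into an isomorphism --- everything afterward is formal manipulation with the projection formula and the invertibility of $e_F$. An alternative, more analytic route that avoids the topological localization theorem is the Berline--Vergne argument in the Cartan model: fix an $S^1$-invariant Riemannian metric, use the global angular forms of the unit normal sphere bundles to construct an equivariant primitive of $\alpha$ on the complement of shrinking invariant tubes around the $F$'s, and apply Stokes' theorem, letting the tube radii tend to zero to produce exactly the residues $\int_F\alpha|_F/e_F$. I would choose whichever fits better with the machinery already in place.
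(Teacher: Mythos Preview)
The paper does not prove this theorem at all: it is quoted as a classical result (the Atiyah--Bott--Berline--Vergne localization theorem) and used as a black box throughout Section~4. So there is no proof in the paper to compare against.

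That said, your sketch is a correct outline of one of the two standard arguments. The reduction to showing that $H^*_{S^1}(M\setminus M^{S^1})$ is $u$-torsion, followed by the self-intersection formula $i^*i_* = e$ and invertibility of the equivariant Euler classes over $\R(u)$, is exactly the Borel--Atiyah--Bott route; the alternative Cartan-model argument you mention is the Berline--Vergne route. Either would be acceptable, and neither is what the paper does, since the paper simply invokes the result. One minor caution: your phrase ``a power of $u$'' in the torsion step deserves care --- on a noncompact piece like $M\setminus M^{S^1}$ one usually argues via a finite equivariant good cover or an equivariant CW structure to get a uniform exponent, and you gesture at this with the Mayer--Vietoris induction; in a full write-up you would want to make that finiteness explicit.
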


    The integral $\int_M$ is often called the \textit{integration along the fiber $M$}. Since the action is Hamiltonian, $M$ is equivariantly formal by Theorem \ref{theorem : formality} so that we have $H^*_{S^1}(M) \cong H^*(M) \otimes H^*(BS^1)$ as an $H^*(BS^1)$-module by Theorem \ref{theorem : formality2}. If we denote by $\widetilde{Y_i^k} \in H^k_{S^1}(M)$ the canonical class such that $f^*(\widetilde{Y_i^k}) = Y_i^k \in H^k(M)$, then $\widetilde{Y_i^k}$ can be written by
    $$ \widetilde{Y_i^k} = Y_i^k \otimes 1 + \cdots \in H^k_{S^1}(M)$$ and the operation $\int_M$ acts on the ordinary cohomology factor, i.e., $\int_M \widetilde{Y_i^k} = \langle Y_i^k , [M] \rangle$. Therefore, we have
    \begin{equation}\label{equation : equivariant Hodge-Riemann bilnear form}
    HR_k(Y_i^k, Y_j^k) = \int_M \widetilde{Y_i^k}\widetilde{Y_j^k}\widetilde{\omega}^{n-k}
    \end{equation} where $\widetilde{\omega}$ is any equivariant extension of $\omega$, i.e., $f^*([\widetilde{\omega}]) = [\omega]$. The right-hand side of the equation (\ref{equation : equivariant Hodge-Riemann bilnear form}) seems to be complicated, but we will see that we can easily compute the integration in (\ref{equation : equivariant Hodge-Riemann bilnear form}) in Section 4.

    Here is the final remark. For a given Hamiltonian $S^1$-manifold $(M,\omega)$ with a moment map $H : M \rightarrow \R$, we may always find an equivariant extension $\widetilde{\omega}$ of $\omega$ on $M \times_{S^1} ES^1$ as follows. For the product space $M \times ES^1$, consider a two form $\omega_H := \omega + d(H \cdot \theta)$, regarding $\omega$ as a pull-back
    of $\omega$ along the projection $M \times ES^1 \rightarrow M$ and $\theta$ as a pull-back of a connection 1-form on the principal $S^1$-bundle $ES^1 \rightarrow BS^1$ along the projection $M \times ES^1 \rightarrow ES^1$. Here, the connection form $\theta$ is nothing but a finite dimensional approximation of the connection form of the principal $S^1$-bundle $S^{2n-1} \rightarrow \C P^n$. (See \cite{Au} for the details.) It is not hard to show that $\omega_H$ is $S^1$-invariant and
    $i_X\omega_H = 0$, i.e., the fundamental vector field generated by the action (tangent to the fiber $S^1$ of $M \times ES^1 \rightarrow M \times_{S^1} ES^1$) is in the kernel of $\omega_H$. Hence we may push-forward $\omega_H$ to the Borel construction $M \times_{S^1} ES^1$ and denote by $\widetilde{\omega}_H$ the push-forward
    of $\omega_H$. Obviously, the restriction of $\widetilde{\omega}_H$ on each fiber $M$ is
    precisely $\omega$ and we call a class $[\widetilde{\omega}_H] \in H^2_{S^1}(M)$ an \textit{equivariant symplectic class with respect
    to $H$.} By definition of $\widetilde{\omega}_H$, we have the following proposition.

    \begin{proposition}\label{proposition : restriction of symplectic class}\cite{Au}
    Let $(M,\omega)$ be an Hamiltonian $S^1$-manifold with a moment map $H : M \rightarrow \R$. Let $[\widetilde{\omega}_H]$ be the equivariant symplectic class with respect to $H$. Then for any fixed component $F \subset M^{S^1}$, we have
    $$[\widetilde{\omega}_H]|_F = -H(F) \otimes u + \omega|_F \otimes 1 \in H^*_{S^1}(F) \cong H^*(F) \otimes H^*(BS^1).$$
    \end{proposition}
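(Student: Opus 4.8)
The plan is to unwind the definition of $\widetilde{\omega}_H$ and restrict it to a fixed component $F$, exploiting that the $S^1$-action on $F$ is trivial. Recall from the construction immediately preceding the statement that $\widetilde{\omega}_H$ is the push-forward to the Borel construction $M\times_{S^1}ES^1$ of the closed $S^1$-invariant $2$-form
$$\omega_H \;=\; \omega + d(H\cdot\theta) \;=\; \omega + dH\wedge\theta + H\cdot d\theta$$
on $M\times ES^1$ satisfying $i_X\omega_H=0$, where $\omega$ and $H$ are pulled back from $M$ and $\theta$ is a connection $1$-form on $ES^1\to BS^1$ (in practice a finite-dimensional approximation arising from $S^{2N-1}\to\CP^{N-1}$). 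Since only the class $[\widetilde{\omega}_H]|_F$ is at issue, it is harmless to work with such a finite-dimensional model.

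First I would restrict everything to $F\times ES^1\subset M\times ES^1$. Since $S^1$ acts trivially on $F$, the restriction of the Borel construction $M\times_{S^1}ES^1$ over $F$ is the trivial product $F\times BS^1$, and the K\"{u}nneth identification $H^*_{S^1}(F)\cong H^*(F)\otimes H^*(BS^1)$ is exactly the one used in the statement. Now $H$ is constant on $F$ with value $H(F)$, so $dH|_F=0$, whence
$$\omega_H|_{F\times ES^1} \;=\; \omega|_F + H(F)\cdot d\theta ,$$
the first summand pulled back from $F$ and the second from $ES^1$. Both summands are basic for the circle action on $F\times ES^1$, so they push forward termwise: $\omega|_F$ descends to $\omega|_F\otimes1$ on $F\times BS^1$, and $d\theta$ descends to a closed $2$-form on $BS^1$ representing $-u\in H^2(BS^1;\R)$ with the normalizations recalled below. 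This yields
$$[\widetilde{\omega}_H]|_F \;=\; \omega|_F\otimes1 \;-\; H(F)\otimes u ,$$
which is the assertion. As a consistency check on the $\otimes1$ term, note that $f^*[\widetilde{\omega}_H]=[\omega]$ by construction, and restricting $[\widetilde{\omega}_H]|_F$ to a single copy of $F$ inside $F\times BS^1$ kills the $H^0(F)\otimes H^2(BS^1)$-summand and returns $\omega|_F$, forcing the $H^2(F)\otimes H^0(BS^1)$-component to be precisely $\omega|_F\otimes1$.

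The one genuinely delicate point is the claim that $d\theta$ descends to $-u$, equivalently the minus sign in front of $H(F)$. This is pure bookkeeping among three normalizations: the convention $i_X\theta=1$ for the connection form relative to the generator $X$ of the action, the moment map convention $i_X\omega=dH$ used throughout, and the choice of $u$ as the \emph{positive} generator of $H^2(BS^1)$ with $\langle u,[\CP^1]\rangle=1$. I would settle it by the explicit computation over a single point $p\in F$: restricting $\widetilde{\omega}_H$ to $\{p\}\times\CP^{N-1}$ one has $\omega|_p=0$, so the restriction equals $H(F)$ times the curvature of the bundle $S^{2N-1}\to\CP^{N-1}$, and pairing with a projective line $\CP^1$ reads off the coefficient, which comes out to $-H(F)$ under the conventions above (as in \cite{Au}). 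So the main obstacle is not geometry — everything collapses because the action on $F$ is trivial and $H|_F$ is constant — but rather fixing this sign, which the $\{p\}\times\CP^1$ computation does once and for all.
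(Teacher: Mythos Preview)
Your argument is correct and is precisely the ``by definition'' computation the paper alludes to: the paper does not supply its own proof here but simply cites \cite{Au} after remarking that the proposition follows from the definition of $\widetilde{\omega}_H$, and your unwinding of $\omega_H|_{F\times ES^1}=\omega|_F+H(F)\,d\theta$ together with the sign check via a fiber $\CP^1$ is exactly that definition chase.
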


\section{Proof of Proposition \ref{proposition : if and only if} and Proposition \ref{proposition : off-topic}}

In this section, we prove Proposition \ref{proposition : if and only if} and Proposition \ref{proposition : off-topic}.
Throughout this section, we assume that $(M,\omega)$ is a six-dimensional compact symplectic manifold with a Hamiltonian circle action with only two fixed components, i.e., a simple Hamiltonian $S^1$-manifold (see Definition \ref{definition : simple Hamiltonian} or \cite{HH}).
We also assume that a symplectic form $\omega$ on $M$ is chosen such that a moment map $H$ maps $M$ onto $[0,1]$.
We use the following notation :
\begin{itemize}
    \item $Z_{\min} = H^{-1}(0)$ ($Z_{\max} = H^{-1}(1)$, respectively) : the fixed component which attains a minimum (maximum, respectively) of $H$.
    \item $\omega_0 = \omega|_{Z_{\min}}$ ($\omega_1 = \omega|_{Z_{\max}}$, respectively) : the restriction of $\omega$ to $Z_{\min}$ ($Z_{\max}$, respectively).
\end{itemize}
Without loss of generality, we may assume that $\dim Z_{\min} \leq \dim Z_{\max}$.
Since every fixed component is a symplectic submanifold of $(M,\omega)$, the possible dimension of $Z_{\max}$ is 0, 2, or 4. Recall that $H$ is a perfect Morse-Bott function \cite{Ki}, i.e.,
    $$P_t(M) = \sum_{Z \subset M^{S^1}} (\sum_k \dim H^k(Z)t^{k + \mathrm{ind}(Z)}).$$
where $P_t(M)$ is the Poincar\'{e} polynomial of $M$.
In our situation, the Poincar\'{e} polynomial is written as
\begin{equation}\label{equation : poincare}
     P_t(M) = \sum_k \dim H^k(Z_{\min}) t^k + \sum_k \dim H^k(Z_{\max}) t^{k + \mathrm{ind}(Z_{\max})}.
\end{equation}
Before we proceed further, we need to introduce the following remarkable result due to Li and Tolman which will be used in the next step.
\begin{theorem}\cite[Theorem 1]{LT}\label{theorem : Li and Tolman}
    Let the circle act in a Hamiltonian fashion on a compact $2n$-dimensional symplectic manifold $(M,\omega)$. Suppose that $M^{S^1}$ has exactly two components, $X$ and $Y$, and $M^{S^1}$ is \textit{minimal}, i.e., $$\dim X + \dim Y = \dim M - 2.$$ Then
    \begin{itemize}
        \item $H^*(X;\Z) \cong \Z[u] / u^{i+1}$ where $\dim X = 2i$, and
        \item $H^*(Y;\Z) \cong \Z[v] / v^{j+1}$ where $\dim Y = 2j$.
    \end{itemize}
    Consequently, we have $H^i(M ;\Z) = H^i(\C P^n;\Z)$ for every $i \geq 0$.
\end{theorem}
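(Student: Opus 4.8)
The plan is to combine Morse--Bott theory of the moment map with a close analysis of the reduced spaces and the Duistermaat--Heckman theorem, using the fact that the fixed submanifolds are themselves symplectic for the decisive lower bound on their Betti numbers. Relabel if necessary so that $X=H^{-1}(\min)$ and $Y=H^{-1}(\max)$; the argument is symmetric in $X$ and $Y$. Write $\dim X=2i$ and $\dim Y=2j$, so minimality reads $i+j=n-1$. First I would record the standard Morse--Bott facts for the moment map $H$: it is a perfect Morse--Bott function, $X$ has index $0$ and $Y$ has index equal to its real codimension $2n-2j=2(i+1)$, so $H^{*}(M)\cong H^{*}(X)\oplus H^{*-2(i+1)}(Y)$ as graded groups. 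After passing to the effective quotient, I would check (using minimality) that the $S^{1}$-action is free away from $X\cup Y$; then for small $\epsilon>0$ the level set $H^{-1}(\epsilon)$ is the unit normal sphere bundle of $\nu_{X}$, so $W:=H^{-1}(\epsilon)/S^{1}=\mathbb{P}(\nu_{X})$ is a genuine $\CP^{j}$-bundle $p\colon W\to X$, while $H^{-1}(1-\epsilon)/S^{1}=\mathbb{P}(\nu_{Y})$ is a $\CP^{i}$-bundle $q\colon W\to Y$. (When $i=0$ or $j=0$ the relevant component is a point or is $W$ itself and the claim is immediate, so assume $i,j\geq1$.)

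Since $H$ has no critical value in $(\min,\max)$, the gradient flow identifies all intermediate level sets $S^{1}$-equivariantly, hence identifies the two reduced spaces and, up to the weight-$(-1)$ twist at the maximum, their tautological circle bundles; thus, writing $x:=c_{1}(\mathcal{O}_{\mathbb{P}(\nu_{X})}(1))\in H^{2}(W)$, one obtains $c_{1}(\mathcal{O}_{\mathbb{P}(\nu_{Y})}(1))=-x$. Next I would apply the Duistermaat--Heckman theorem: under this identification the class $[\omega_{W,t}]$ of the reduced symplectic form on $H^{-1}(t)/S^{1}$ is affine in $t$, with $t$-derivative a nonzero multiple of $x$ (namely $\pm$ the Euler class of the circle bundle $H^{-1}(t)\to W$), and it converges to $p^{*}[\omega|_{X}]$ as $t\to\min$ and to $q^{*}[\omega|_{Y}]$ as $t\to\max$. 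Evaluating at the two endpoints yields
\[
q^{*}[\omega|_{Y}]=p^{*}[\omega|_{X}]+\kappa\,x \quad\text{in } H^{2}(W;\R),\qquad \kappa\neq0 .
\]

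The heart of the argument is then a degree computation. Take a fibre $F=q^{-1}(y_{0})\cong\CP^{i}$ and consider $p|_{F}\colon\CP^{i}\to X$. Since $q^{*}$ of any positive-degree class restricts to $0$ on $F$ while $x|_{F}$ generates $H^{2}(\CP^{i})$, the displayed identity gives $(p|_{F})^{*}[\omega|_{X}]=-\kappa\,x|_{F}$, hence $(p|_{F})^{*}([\omega|_{X}]^{i})=(-\kappa)^{i}\,(x|_{F})^{i}\neq0$; comparing with $(p|_{F})^{*}([\omega|_{X}]^{i})=\deg(p|_{F})\cdot\int_{X}[\omega|_{X}]^{i}$ and using that the symplectic volume $\int_{X}[\omega|_{X}]^{i}$ is nonzero forces $\deg(p|_{F})\neq0$. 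Since a nonzero-degree map between closed oriented $2i$-manifolds is injective on rational cohomology, $(p|_{F})^{*}\colon H^{*}(X;\Q)\hookrightarrow H^{*}(\CP^{i};\Q)$, whence $b_{2k}(X)\leq1$ for $0\leq k\leq i$ and $b_{k}(X)=0$ for $k$ odd. But $X$ is a closed symplectic $2i$-manifold, so $[\omega|_{X}]^{k}\neq0$ for $0\leq k\leq i$ and therefore $b_{2k}(X)\geq1$. Squeezing gives $P_{t}(X)=1+t^{2}+\cdots+t^{2i}$ with $H^{2k}(X;\Q)$ spanned by $[\omega|_{X}]^{k}$, i.e.\ $H^{*}(X;\Q)\cong\Q[u]/u^{i+1}$; the symmetric computation, using a fibre of $p$ mapped to $Y$ by $q$, gives $H^{*}(Y;\Q)\cong\Q[v]/v^{j+1}$. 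Torsion-freeness of $H^{*}(X;\Z)$ and $H^{*}(Y;\Z)$ (deduced from the Morse--Bott handle decomposition of $M$, or from simple-connectedness of $X$ and $Y$) then upgrades these to $H^{*}(X;\Z)\cong\Z[u]/u^{i+1}$ and $H^{*}(Y;\Z)\cong\Z[v]/v^{j+1}$; substituting into $H^{*}(M)\cong H^{*}(X)\oplus H^{*-2(i+1)}(Y)$ gives $P_{t}(M)=1+t^{2}+\cdots+t^{2n}$ with torsion-free cohomology, so $H^{i}(M;\Z)=H^{i}(\CP^{n};\Z)$ for every $i$.

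The hardest point, and the reason such an argument is needed at all, is that Betti-number bookkeeping alone does not suffice: for instance, for $\dim M=10$ with $i=j=2$, the hypothetical $X\cong Y\cong S^{2}\times S^{2}$ is consistent with Poincar\'e duality on $M$, $X$ and $Y$ and with the Morse--Bott equality, so genuine geometry must enter. Here that geometry is the Duistermaat--Heckman identity together with the symplectic lower bound $b_{2k}(X)\geq1$; the remaining technical work will be pinning down the bundle structure of the reduced spaces (freeness of the action on regular levels, equivalently isotropy weights $\pm1$ along $X$ and $Y$, and the sign $c_{1}(\mathcal{O}_{\mathbb{P}(\nu_{Y})}(1))=-x$) and the endpoint behaviour $[\omega_{W,t}]\to p^{*}[\omega|_{X}],\,q^{*}[\omega|_{Y}]$, both of which rest on the equivariant normal form of $\omega$ near the fixed submanifolds.
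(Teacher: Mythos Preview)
The paper does not prove this statement: it is quoted as Theorem~1 of \cite{LT} and used as a black box (the paper explicitly says ``we omit the rest part of the original theorem~1 in \cite{LT}''). So there is no in-paper proof to compare your proposal against.

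As to the proposal itself, the overall strategy---use the two bundle structures on the common reduced space $W$ together with Duistermaat--Heckman to manufacture a nonzero-degree map $\CP^{i}\to X$, then squeeze $b_{2k}(X)$ between $1$ (from symplecticity) and $1$ (from injectivity of $(p|_F)^*$)---is sound and is close in spirit to how Li--Tolman argue. Two points deserve more care. First, the assertion that minimality forces the effective action to be free on $M\setminus(X\cup Y)$ is not automatic; in general the isotropy weights along $X$ and $Y$ need not be $\pm1$, so the reduced space is a weighted projective bundle (an orbifold) rather than a genuine $\CP^{j}$-bundle. The rational argument survives this, since weighted projective spaces have the rational cohomology of ordinary ones and Duistermaat--Heckman holds in the orbifold setting, but you should say so rather than claim freeness. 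Second, the passage from $\Q$ to $\Z$ is not justified: the Morse--Bott splitting $H^{*}(M)\cong H^{*}(X)\oplus H^{*-2(i+1)}(Y)$ says nothing about torsion in $H^{*}(X;\Z)$ or $H^{*}(Y;\Z)$, and you have not established simple-connectedness of $X$ and $Y$ (nor would simple-connectedness alone kill torsion in higher degrees). Li--Tolman obtain the integral statement via a finer equivariant-cohomology analysis; your sketch, as written, only yields the rational isomorphisms $H^{*}(X;\Q)\cong\Q[u]/u^{i+1}$, $H^{*}(Y;\Q)\cong\Q[v]/v^{j+1}$, and hence $H^{i}(M;\Q)\cong H^{i}(\CP^{n};\Q)$. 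For the application in this paper (verifying the hard Lefschetz property, a real-coefficient statement) that already suffices.
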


In fact, Theorem \ref{theorem : Li and Tolman} is a part of the original theorem 1 in \cite{LT}. Actually they classified all possible cohomology rings of simple Hamiltonian $S^1$-manifold in the case where the fixed point set is minimal. Since it is enough to use Theorem \ref{theorem : Li and Tolman} in our paper, we omit the rest part of the original theorem 1 in \cite{LT}.

\begin{lemma}\label{lemma : dimension less than four}
    If $\dim Z_{\max} \leq 2$, then $(M,\omega)$ satisfies the hard Lefschetz property.
\end{lemma}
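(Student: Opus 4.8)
The plan is to run a short case analysis on the pair $(\dim Z_{\min}, \dim Z_{\max})$. Since every fixed component is a symplectic submanifold and we have normalized so that $\dim Z_{\min} \le \dim Z_{\max} \le 2$, the only possibilities are $(\dim Z_{\min}, \dim Z_{\max}) \in \{(0,0),\,(0,2),\,(2,2)\}$. I will show that the first two cases cannot occur for a closed symplectic manifold, and that in the third case the cohomology is forced to be that of $\CP^3$, for which the hard Lefschetz property follows by inspection.

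First I would dispose of the cases $\dim Z_{\min} = 0$. Here $Z_{\min}$ is a single point of index $0$, while $Z_{\max}$, being the maximum of $H$, has negative normal bundle equal to its entire normal bundle, hence Morse--Bott index $6 - \dim Z_{\max}$. Substituting into the perfect Morse--Bott identity $(\ref{equation : poincare})$, the case $(0,0)$ gives $P_t(M) = 1 + t^6$, and the case $(0,2)$ gives $P_t(M) = 1 + t^4 + b_1(Z_{\max})\,t^5 + t^6$, using that $Z_{\max}$ is a closed oriented surface. In either case $b_2(M) = 0$. But $M$ is a closed symplectic $6$-manifold, so $[\omega]^3 \neq 0$ in $H^6(M;\R)$ forces $[\omega] \neq 0$ in $H^2(M;\R)$, i.e. $b_2(M) \geq 1$ — a contradiction. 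Hence only the case $\dim Z_{\min} = \dim Z_{\max} = 2$ survives.

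In that remaining case $M^{S^1}$ has exactly two components with $\dim Z_{\min} + \dim Z_{\max} = 4 = \dim M - 2$, so $M^{S^1}$ is minimal and Theorem \ref{theorem : Li and Tolman} applies: $H^i(M;\Z) \cong H^i(\CP^3;\Z)$ for all $i$. In particular $H^{\mathrm{odd}}(M;\R) = 0$ and $H^0(M;\R) \cong H^2(M;\R) \cong H^4(M;\R) \cong H^6(M;\R) \cong \R$. It then suffices to check that $[\omega]^{3-k} : H^k(M;\R) \to H^{6-k}(M;\R)$ is an isomorphism for $k = 0,1,2,3$. The cases $k = 1, 3$ are trivial since the groups involved vanish (or the map is the identity), and $k = 0$ holds because $[\omega]^3$ is a nonzero multiple of a generator of $H^6(M;\R)$. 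For $k = 2$, fix a generator $a$ of $H^2(M;\R)$ and write $[\omega] = \lambda a$ with $\lambda \neq 0$; if $a^2$ were zero in $H^4(M;\R)$ then $[\omega]^3 = \lambda^3 a^3 = 0$, contradicting the non-degeneracy of $\omega$, so $a^2$ generates $H^4(M;\R)$ and multiplication by $[\omega]$ sends $a \mapsto \lambda a^2$, an isomorphism. This establishes the hard Lefschetz property.

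The only substantive input is Theorem \ref{theorem : Li and Tolman}, which pins down the cohomology in the $(2,2)$ case; the remainder is the Morse--Bott bookkeeping of $(\ref{equation : poincare})$ together with the elementary fact that a closed symplectic $6$-manifold has $b_2 \geq 1$. The one point that needs care is that Theorem \ref{theorem : Li and Tolman} yields only the \emph{additive} cohomology of $\CP^3$, so the ring-theoretic statement $a^2 \neq 0$ in $H^4(M;\R)$ must be extracted separately — and this is precisely where the symplectic condition $[\omega]^3 \neq 0$ is invoked a second time.
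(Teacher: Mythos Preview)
Your argument is correct and follows essentially the same route as the paper: eliminate the low-dimensional cases via the Poincar\'e polynomial identity $(\ref{equation : poincare})$ (the paper rules out $(0,2)$ by Poincar\'e duality rather than by $b_2=0$, but this is the same observation), then invoke Theorem~\ref{theorem : Li and Tolman} in the $(2,2)$ case. Your final paragraph spelling out why $b_2(M)=1$ together with $[\omega]^3\neq 0$ forces hard Lefschetz is a welcome elaboration of the paper's one-line ``automatically satisfied.''
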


\begin{proof}
    Assume that $\dim Z_{\max} = 0$ ($Z_{\max} = \mathrm{pt}$) so that $\mathrm{ind}(Z_{\max}) = 6$ where $``\mathrm{ind}"$ means a Morse index with respect to $H$.
    By our assumption, we have $\dim Z_{\min} \leq \dim Z_{\max}$ so that $\dim Z_{\min} = 0$ with $\mathrm{ind}(Z_{\min}) = 0$.
    Hence the Poincar\'{e} polynomial in (\ref{equation : poincare}) is given by $$P_t(M) = t^6 + 1$$ so that we have $H^2(M) = H^4(M)= 0$
    which contradicts that $[\omega] \neq 0$ in $H^2(M)$.

    Now, assume that $\dim Z_{\max} = 2$ so that $\mathrm{ind}(Z_{\max}) = 4$. If $\dim Z_{\min} = 0$,
    Then the Poincar\'{e} polynomial in (\ref{equation : poincare}) is given by
    $$P_t = t^6 + b_1(Z_{\max})t^5 + t^4 + 1$$
    which is impossible by Poincar\'{e} duality of $M$.
    Hence we have $\dim Z_{\min} = 2$ so that it satisfies $$ \dim Z_{\max} + \dim Z_{\min} = \dim M - 2, $$
    i.e., the fixed point set is minimal in the sense of Li and Tolman \cite{LT}. By Theorem \ref{theorem : Li and Tolman}, we have $H^i(M ; \Z) \cong H^i(\C P^3 ; \Z)$ for every $i \geq 0$.
    Consequently, the hard Lefschetz property of $\omega$ is automatically satisfied.
\end{proof}

\begin{lemma}\label{lemma : dimension zero and four}
    If $\dim Z_{\max} = 4$ and $\dim Z_{\min} = 0$, then $(M,\omega)$ satisfies the hard Lefschetz property.
\end{lemma}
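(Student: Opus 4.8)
The plan is to recognize that the hypotheses place us exactly in the ``minimal fixed point set'' situation of Li and Tolman, so that Theorem \ref{theorem : Li and Tolman} applies off the shelf. Since $\dim Z_{\min} = 0$ and $\dim Z_{\max} = 4$, we have
$$\dim Z_{\min} + \dim Z_{\max} = 4 = \dim M - 2,$$
which is precisely the minimality condition. (As a consistency check one could instead run Morse--Bott theory directly: $Z_{\max}$, being a maximum, has Morse index equal to the rank of its normal bundle, namely $6 - 4 = 2$, while $\mathrm{ind}(Z_{\min}) = 0$; feeding these into (\ref{equation : poincare}), together with Poincar\'{e} duality on the oriented $4$-manifold $Z_{\max}$ and on $M$, forces $b_1(Z_{\max}) = 0$, $b_2(Z_{\max}) = 1$, and $P_t(M) = 1 + t^2 + t^4 + t^6$; but we will not need this computation.)

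Granting minimality, Theorem \ref{theorem : Li and Tolman} with $n = 3$ yields $H^i(M;\Z) \cong H^i(\C P^3;\Z)$ for every $i \geq 0$; in particular $H^*(M;\R) \cong \R[x]/x^4$ where $x$ is a generator of $H^2(M;\R)$. It then remains only to verify the hard Lefschetz property over this cohomology ring. Since $[\omega] \neq 0$ in $H^2(M;\R) \cong \R$, write $[\omega] = c\,x$ with $c \neq 0$. The maps $[\omega]^{3-k} : H^k(M;\R) \to H^{6-k}(M;\R)$ are isomorphisms for all $k$: for $k = 1$ and $k = 3$ both sides vanish; for $k = 0$ the map sends $1 \mapsto c^3 x^3 \neq 0$; and for $k = 2$ the map sends $x \mapsto c\,x^2 \neq 0$, hence is an injection between one-dimensional spaces. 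Therefore $\omega$ is of hard Lefschetz type.

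I do not expect a serious obstacle here: the entire content is matching the dimension hypothesis to the hypothesis of Theorem \ref{theorem : Li and Tolman} and recalling that a closed manifold with the rational cohomology ring of $\C P^3$ satisfies hard Lefschetz for any nonzero degree-two class. The only point requiring a moment's care is confirming that the fixed component of dimension $4$ really does make $M^{S^1}$ minimal, which is immediate from $0 + 4 = 6 - 2$.
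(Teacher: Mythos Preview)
Your proof is correct. The paper, however, takes the more elementary route that you relegated to a parenthetical: it computes the Poincar\'{e} polynomial directly from (\ref{equation : poincare}) with $\mathrm{ind}(Z_{\max}) = 2$, observes that Poincar\'{e} duality on $M$ forces $b_1(Z_{\max}) = b_3(Z_{\max}) = 0$ and $b_2(Z_{\max}) = 1$, and concludes $b_2(M) = 1$, from which hard Lefschetz is immediate. You instead invoke Theorem \ref{theorem : Li and Tolman}; this is a heavier tool but perfectly applicable since $0 + 4 = 6 - 2$, and it is the same device the paper uses in the $\dim Z_{\min} = \dim Z_{\max} = 2$ case of Lemma \ref{lemma : dimension less than four}. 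Either way the argument funnels through $b_2(M) = 1$.

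One small caution: as stated in the paper, Theorem \ref{theorem : Li and Tolman} gives only $H^i(M;\Z) \cong H^i(\C P^3;\Z)$ for each $i$, not the ring isomorphism $H^*(M;\R) \cong \R[x]/x^4$ you assert. This does not damage your argument --- once $b_2(M) = 1$ and $H^{\mathrm{odd}}(M) = 0$, the nonvanishing of $[\omega]^3$ coming from symplecticity already forces every Lefschetz map to be an isomorphism, and the ring structure follows a posteriori --- but it would be cleaner to phrase the conclusion that way rather than as a direct consequence of the theorem.
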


\begin{proof}
    If $\dim Z_{\max} = 4$, then $\mathrm{ind}(Z_{\max}) = 2$ so that the Poincar\'{e} polynomial in (\ref{equation : poincare}) is given by
    $$ P_t(M) = t^6 + b_3(Z_{\max})t^5 + b_2(Z_{\max})t^4 + b_1(Z_{\max})t^3 + t^2 + 1. $$
    By Poincar\'{e} duality, we have $b_3(Z_{\max}) = b_1(Z_{\max}) = 0$ and $b_2(Z_{\max}) = 1$.
    In particular, we have $b_2(M) = 1$ so that $\omega$ satisfies the hard Lefschetz property.
\end{proof}

\begin{lemma}\label{lemma : computation of eauivariant Euler class}
    Let $\pi : E \rightarrow B$ be a complex line bundle such that $c_1(E) = e \in H^2(B)$. Also suppose that $S^1$ acts linearly on $E$ such that $\pi$ is $S^1$-equivariant with respect to the trivial $S^1$-action on $B$, i.e., $S^1$ acts on $E$ fiberwise. Then the equivariant first Chern class of $E$ is given by
    $$c_1^{S^1}(E) = e \otimes 1 + 1 \otimes \lambda u \in H^2_{S^1}(B) = H^*(B) \otimes H^*(BS^1)$$
    where $\lambda$ is the unique non-zero weight of tangential $S^1$-representation on the fixed component $B$.
\end{lemma}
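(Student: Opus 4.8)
The plan is to exploit that the $S^1$-action on $B$ is trivial, so that $B \times_{S^1} ES^1 = B \times BS^1$ and hence, by the K\"{u}nneth theorem together with $H^{\mathrm{odd}}(BS^1) = 0$,
$$ H^2_{S^1}(B)\;\cong\; H^2(B)\otimes H^0(BS^1)\;\oplus\; H^0(B)\otimes H^2(BS^1) \;\cong\; H^2(B)\oplus \R u. $$
Accordingly I would write $c_1^{S^1}(E) = a\otimes 1 + c\,(1\otimes u)$ for a unique $a \in H^2(B)$ and $c \in \R$ (the fibrewise weight $\lambda$ being a well-defined constant since $B$ is connected), and the proof then reduces to identifying $a$ and $c$ by two restriction computations.

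To pin down $a$, I would restrict along the inclusion $f : B \hookrightarrow B \times_{S^1} ES^1$ of an ordinary fibre of the Borel fibration over $BS^1$. On the one hand $f^*\bigl(a\otimes 1 + c(1\otimes u)\bigr) = a$, since $f^*(1\otimes u) = u|_{\mathrm{pt}} = 0$; on the other hand the Borel bundle $E\times_{S^1}ES^1$ restricted to a fibre $B\times\{[p]\}$ is canonically $E$ itself, so $f^*c_1^{S^1}(E) = c_1(E) = e$. Hence $a = e$. To pin down $c$, I would restrict to any point $b\in B$, which is a fixed point because $S^1$ acts trivially on $B$; the equivariant bundle $E$ restricted to $\{b\}$ is just the fibre $\C$ with its fibrewise $S^1$-action, i.e.\ the weight-$\lambda$ representation $\C_{(\lambda)}$. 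Since equivariant restriction commutes with the equivariant Chern class, $c_1^{S^1}(E)|_{\{b\}} = c_1^{S^1}(\C_{(\lambda)}) = \lambda u$ by the normalization of $u$ fixed in Section 3, whereas $\bigl(a\otimes 1 + c(1\otimes u)\bigr)|_{\{b\}} = c\,u$ because $a\in H^2(B)$ restricts to $0$ on a point. Hence $c = \lambda$, and combining the two steps gives $c_1^{S^1}(E) = e\otimes 1 + 1\otimes \lambda u$.

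I do not expect a serious obstacle here: once $H^2_{S^1}(B)$ is split as above, the answer is forced by the two restrictions. The only place that needs a little care is the sign bookkeeping for the weight convention, namely that $\C_{(\lambda)}$ has equivariant first Chern class exactly $\lambda u$ and not $-\lambda u$; this is precisely the normalization $\langle u,[\C P^1]\rangle = 1$ adopted in Section 3, under which $\C_{(\lambda)}$ is the representation associated to $\mathcal{O}_{BS^1}(\lambda)$. Alternatively one can bypass restrictions entirely by observing that $E \cong E_0\otimes\C_{(\lambda)}$ as $S^1$-equivariant line bundles, where $E_0 := E\otimes\C_{(-\lambda)}$ carries the trivial $S^1$-action and is isomorphic to $E$ as a plain line bundle; applying the Borel construction then gives $E\times_{S^1}ES^1 \cong \mathrm{pr}_1^*E\otimes\mathrm{pr}_2^*\mathcal{O}_{BS^1}(\lambda)$ over $B\times BS^1$, and taking $c_1$ yields $e\otimes 1 + 1\otimes\lambda u$ at once.
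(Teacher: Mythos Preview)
Your proof is correct and follows essentially the same approach as the paper: both use the K\"{u}nneth decomposition $H^2_{S^1}(B) \cong H^2(B)\oplus \R u$ coming from the trivial $S^1$-action on $B$, then identify the two components by restricting to a fibre $B$ (giving $e$) and to a point (giving $\lambda u$). Your write-up is a bit more explicit about the sign and normalization conventions, and your alternative tensor-product argument at the end is a nice bonus, but the underlying strategy is the same.
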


\begin{proof}
    Note that the equivariant Chern class $c_1^{S^1}(E)$ is the first Chern class of the complex line bundle
    $$ \widetilde{\pi} : E \times_{S^1}ES^1 \rightarrow B \times_{S^1} ES^1. $$
    Since $B \times_{S^1} ES^1 \cong B \times BS^1$, we have
    $$ c_1^{S^1}(E) = \alpha \otimes 1 + 1 \otimes k u \in H^*(B) \otimes H^*(BS^1)$$
    for some $\alpha \in H^2(B)$ and $k \in \Z$.
    It is not hard to show that $\alpha$ is the first Chern class of $\widetilde{\pi}^{-1}(B \times \mathrm{pt})$, i.e., the first Chern class of
    $$ \widetilde{\pi}|_B : E \times_{S^1} S^1 \subset E \times_{S^1} ES^1 \rightarrow B \times \mathrm{pt} $$
    which is isomorphic to $E$. Therefore we have $\alpha = e$.

    Similarly, $\lambda u$ is the first Chern class of $\widetilde{\pi}^{-1}(\mathrm{pt} \times BS^1)$, i.e., the first Chern class of
    $$ \C (\cong \pi^{-1}(\mathrm{pt})) \times_{S^1} ES^1 \rightarrow \mathrm{pt} \times BS^1. $$
    Hence we have $k = \lambda$ where $\lambda$ is the weight of the $S^1$-representation on the fiber $\pi^{-1}(\mathrm{pt})$ of $\pi$.
\end{proof}

Now we are ready to prove Proposition \ref{proposition : off-topic}.
\begin{proof}[Proof of Proposition \ref{proposition : off-topic}]
    By Lemma \ref{lemma : dimension less than four} and Lemma \ref{lemma : dimension zero and four}, we need only prove the case where
    $\dim Z_{\min} = 2$ and $\dim Z_{\max} = 4$ with $\mathrm{ind}(Z_{\max}) = 2$.
    In this case, the Poincar\'{e} polynomial in (\ref{equation : poincare}) is given by
    $$ P_t(M) = t^6 + b_3(Z_{\max})t^5 + b_2(Z_{\max})t^4 + b_1(Z_{\max})t^3 + 2t^2 + b_1(Z_{\min})t + 1 $$
    so that we have $b_2(Z_{\max}) = 2$ and $b_3(Z_{\max}) = b_1(Z_{\min})$
    by Poincar\'{e} duality. In particular, it follows that $b_2(M) = 2$ and $b_1(M) = b_1(Z_{\min})$.

    To prove the hard Lefschetz property of $\omega$, it is enough to show that the Hodge-Riemann bilinear forms $HR_1$ and $HR_2$ are non-singular since $\dim M = 6$ (see (\ref{equation : Hodge-Riemann bilinear form}) and (\ref{equation : equivariant Hodge-Riemann bilnear form}) in Section 3). Recall that $f^* : H^*_{S^1}(M) \rightarrow H^*(M)$ is the restriction map to the fiber $M$ defined in Theorem \ref{theorem : formality2}.

    Firstly, we will show that the second Hodge-Riemann form $HR_2 : H^2(M) \times H^2(M) \rightarrow \R$ is non-singular. Let $\widetilde{\alpha} \in H^2_{S^1}(M)$ be the canonical class with respect to the fundamental cohomology class $\alpha \in H^2(Z_{\min})$ with $\langle \alpha, [Z_{\min}] \rangle = 1$, and let $\widetilde{\beta} \in H^2_{S^1}(M)$ be the canonical class with respect to $\beta = 1 \in H^0(Z_{\max})$.
    By Theorem \ref{theorem : canonical basis}, the set $\frak{B}_2 = \{f^*(\widetilde{\alpha}), f^*(\widetilde{\beta}) \}$ is an $\R$-basis of $H^2(M)$. Therefore, it is enough to show that the following matrix is non-singular.
    \begin{equation}\label{matrix : case 1}
    HR_2(\frak{B}_2) = \left(\begin{array}{cc}
        HR_2(f^*(\widetilde{\alpha}), f^*(\widetilde{\alpha})) & HR_2(f^*(\widetilde{\alpha}), f^*(\widetilde{\beta}))\\
        HR_2(f^*(\widetilde{\alpha}), f^*(\widetilde{\beta})) & HR_2(f^*(\widetilde{\beta}), f^*(\widetilde{\beta}))\\
    \end{array}\right)
    \end{equation}
    We will show that $HR_2(f^*(\widetilde{\alpha}), f^*(\widetilde{\alpha})) = 0$ and $HR_2(f^*(\widetilde{\alpha}), f^*(\widetilde{\beta})) \neq 0$.
    \begin{lemma}\label{lemma : in proof of Proposition 1.8}
        $HR_2(f^*(\widetilde{\alpha}), f^*(\widetilde{\alpha})) = 0.$
    \end{lemma}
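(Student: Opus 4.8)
The plan is to prove the stronger statement that $\widetilde{\alpha}^{2} = 0$ already in $H^{4}_{S^{1}}(M)$. Granting this, applying the fibre restriction $f^{*}$ of Theorem~\ref{theorem : formality2} gives $(f^{*}\widetilde{\alpha})^{2} = f^{*}(\widetilde{\alpha}^{2}) = 0$ in $H^{4}(M)$, and hence $HR_{2}(f^{*}\widetilde{\alpha}, f^{*}\widetilde{\alpha}) = \langle (f^{*}\widetilde{\alpha})^{2}\cup[\omega], [M]\rangle = 0$, which is exactly the assertion of the lemma.

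To show $\widetilde{\alpha}^{2} = 0$, by Kirwan's injectivity theorem (Theorem~\ref{theorem : injectivity}) it is enough to check that $\widetilde{\alpha}^{2}$ restricts to zero on $Z_{\min}$ and on $Z_{\max}$. The restriction to $Z_{\min}$ is harmless: since $\mathrm{ind}(Z_{\min}) = 0$ we have $e^{-}_{Z_{\min}} = 1$, so $\widetilde{\alpha}|_{Z_{\min}} = \alpha$ by Theorem~\ref{theorem : canonical basis}, and $\alpha^{2} \in H^{4}(Z_{\min}) = 0$ because $Z_{\min}$ is a closed surface. For $Z_{\max}$, note first that condition (3) of Theorem~\ref{theorem : canonical basis} forces the $H^{*}(BS^{1})$-degree of $\widetilde{\alpha}|_{Z_{\max}}$ to be strictly less than $\mathrm{ind}(Z_{\max}) = 2$, so $\widetilde{\alpha}|_{Z_{\max}} = \gamma\otimes 1$ for some $\gamma \in H^{2}(Z_{\max})$ and $\widetilde{\alpha}^{2}|_{Z_{\max}} = \gamma^{2}\otimes 1$; thus everything is reduced to proving that $\gamma^{2} = 0$ in $H^{4}(Z_{\max})\cong\R$.

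For this last step I would expand $\widetilde{\alpha}^{2}$ in the canonical $H^{*}(BS^{1})$-module basis of $H^{4}_{S^{1}}(M)$. In the case under consideration one has $b_{2}(Z_{\max}) = 2$ (computed just above), so, choosing a basis $\eta_{1}, \eta_{2}$ of $H^{2}(Z_{\max})$ with associated canonical classes $\widetilde{\eta_{1}}, \widetilde{\eta_{2}} \in H^{4}_{S^{1}}(M)$ and letting $\widetilde{\beta}$ be the canonical class of $1 \in H^{0}(Z_{\max})$, Theorem~\ref{theorem : canonical basis} together with $H^{3}(Z_{\min}) = H^{4}(Z_{\min}) = 0$ shows that $\{\widetilde{\eta_{1}}, \widetilde{\eta_{2}}, u\widetilde{\alpha}, u\widetilde{\beta}, u^{2}\}$ is a basis of $H^{4}_{S^{1}}(M)$. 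Writing $\widetilde{\alpha}^{2} = c_{1}\widetilde{\eta_{1}} + c_{2}\widetilde{\eta_{2}} + d_{1}u\widetilde{\alpha} + d_{2}u\widetilde{\beta} + e u^{2}$ and restricting to $Z_{\min}$ — where $\widetilde{\eta_{i}}$ and $\widetilde{\beta}$ vanish while $u\widetilde{\alpha}, u^{2}$ restrict to the linearly independent classes $u\alpha, u^{2}$ — forces $d_{1} = e = 0$. Restricting what remains to $Z_{\max}$ and invoking Lemma~\ref{lemma : computation of eauivariant Euler class}, according to which the negative normal bundle of $Z_{\max}$ is the full normal bundle (a complex line bundle, since $\mathrm{ind}(Z_{\max}) = 2$) with equivariant Euler class $e^{-}_{Z_{\max}} = \varepsilon\otimes 1 + \lambda u$ for some $\varepsilon = c_{1}(\nu_{Z_{\max}}) \in H^{2}(Z_{\max})$ and $\lambda \neq 0$, we obtain
\[
\bigl(c_{1}\eta_{1} + c_{2}\eta_{2} + d_{2}u\bigr)\cup(\varepsilon\otimes 1 + \lambda u) = \gamma^{2}\otimes 1 .
\]
Matching the coefficients of $u^{2}$ and of $u$ and the $u$-free parts forces successively $d_{2} = 0$, then $c_{1}\eta_{1} + c_{2}\eta_{2} = 0$ hence $c_{1} = c_{2} = 0$, and finally $\gamma^{2} = 0$. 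Therefore $\widetilde{\alpha}^{2} = 0$, completing the argument.

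The step that needs genuine care is this final bookkeeping: one must confirm that the degree-four part of $H^{*}_{S^{1}}(M)$ has exactly the five-element basis listed — which uses $H^{3}(Z_{\min}) = H^{4}(Z_{\min}) = 0$ and the absence of any fixed component strictly between the minimum and the maximum — and one must keep track of the powers of $u$ correctly when restricting to $Z_{\max}$. The remainder is a formal consequence of Kirwan injectivity and the properties of canonical classes recalled in Section~3; as a sanity check, running the Atiyah--Bott--Berline--Vergne localization directly on $\int_{M}\widetilde{\alpha}^{2}\widetilde{\omega}$ leaves only the $Z_{\max}$-term, which equals a nonzero multiple of $\langle\gamma^{2}, [Z_{\max}]\rangle$, so that $\gamma^{2} = 0$ is precisely the identity required.
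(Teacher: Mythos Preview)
Your proof is correct and takes a genuinely different route from the paper's. The paper argues by two applications of the localization theorem: first it computes
\[
HR_2(f^*\widetilde{\alpha},f^*\widetilde{\alpha})=\int_M\widetilde{\alpha}^2[\widetilde{\omega}_H]=\int_{Z_{\max}}\alpha'^2
\]
(where $\alpha'$ is your $\gamma$), and then it shows $\int_{Z_{\max}}\alpha'^2=0$ by localizing the identity $\int_M\widetilde{\alpha}^2=0$, which holds for degree reasons. You instead prove the stronger statement $\widetilde{\alpha}^2=0$ in $H^4_{S^1}(M)$ directly, via Kirwan injectivity and a coefficient comparison in the canonical $H^*(BS^1)$-module basis. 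Your approach buys a cleaner structural conclusion (equivariant vanishing rather than a numerical identity) and avoids manipulating the series expansion of $1/e_{Z_{\max}}$; the paper's approach is more mechanical and reuses the same localization machinery needed anyway for the other entries of the Hodge--Riemann matrix. The ``sanity check'' you mention at the end is essentially the paper's second localization step, so the two arguments converge on the same key fact $\gamma^2=0$ by different means.
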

    \begin{proof}
        Recall that for the moment map $H : M \rightarrow [0,1]$, there is an equivariant symplectic form $\widetilde{\omega}_H$ on
        $M \times_{S^1} ES^1$ such that
        \begin{displaymath}
            \begin{array}{lll}
                 [\omega_H]|_{Z_{\min}} & = & [\omega_0] \otimes 1 \in H^2_{S^1}(Z_{\min}) ~\mathrm{with} ~~\omega_0 = \omega|_{Z_{\min}}, ~\mathrm{and} \\

                 [\omega_H]|_{Z_{\max}} & = & [\omega_1] \otimes 1 - 1 \otimes u \in H^2_{S^1}(Z_{\max}) ~\mathrm{with} ~~\omega_1 = \omega|_{Z_{\max}} \\
            \end{array}
        \end{displaymath}
        by Proposition \ref{proposition : restriction of symplectic class}, where $u \in H^2(BS^1)$ is the positive generator of $H^*(BS^1)$ (see Section 3).
        By the localization theorem \ref{localization} and the equation (\ref{equation : equivariant Hodge-Riemann bilnear form}), we have
        \begin{displaymath}
            \begin{array}{lll}
                HR_2(f^*(\widetilde{\alpha}), f^*(\widetilde{\alpha})) & = & \displaystyle \int_M \widetilde{\alpha}^2 \cdot [\widetilde{\omega}_H] \\
                & = & \displaystyle \int_{Z_{\min}} \frac{(\widetilde{\alpha}|_{Z_{\min}})^2([\widetilde{\omega}_H]|_{Z_{\min}})}{e_{Z_{\min}}} + \displaystyle \int_{Z_{\max}} \frac{(\widetilde{\alpha}|_{Z_{\max}})^2 ([\widetilde{\omega}_H]|_{Z_{\max}} )}{e_{Z_{\max}}}. \\
            \end{array}
        \end{displaymath}
        Since $[\widetilde{\alpha}]|_{Z_{\min}} = \alpha \otimes 1 \in H^2_{S^1}(Z_{\min})$ by Theorem \ref{theorem : canonical basis}, it implies that
        $(\widetilde{\alpha}|_{Z_{\min}})^2 = \alpha^2 \otimes 1 = 0$ by our assumption that $\dim Z_{\min} = 2$.
        Therefore, we have
        \begin{displaymath}
            \begin{array}{lll}
                HR_2(f^*(\widetilde{\alpha}), f^*(\widetilde{\alpha})) & = & \displaystyle \int_{Z_{\max}} \frac{(\widetilde{\alpha}|_{Z_{\max}})^2 ([\widetilde{\omega}_H]|_{Z_{\max}} )}{e_{Z_{\max}}}. \\
            \end{array}
        \end{displaymath}
        Note that $\widetilde{\alpha}|_{Z_{\max}} = \alpha' \otimes 1 + 1 \otimes ku \in H^2_{S^1}(Z_{\max})$ for some $\alpha' \in H^2(Z_{\max})$.
        However, the $H^*(BS^1)$-degree of $\widetilde{\alpha}|_{Z_{\max}}$ is less than $\mathrm{ind}(Z_{\max}) = 2$ by Theorem \ref{theorem : canonical basis} so that $k$ must be zero, i.e., $\widetilde{\alpha}|_{Z_{\max}} = \alpha' \otimes 1 \in H^2(Z_{\max}) \otimes H^0(BS^1) \subset H^2_{S^1}(Z_{\max})$. Also, note that the action is semi-free since the action is assumed to be effective. Hence the weight of $S^1$-representation on the normal bundle over $Z_{\max}$ is $-1$ so that the equivariant Euler class $e_{Z_{\max}}$ is given by $$e_{Z_{\max}} = - u \otimes 1 - 1 \otimes e$$ by Lemma \ref{lemma : computation of eauivariant Euler class}, where $e$ is the Euler class of a principal $S^1$-bundle $H^{-1}(1-\epsilon) \rightarrow H^{-1}(1-\epsilon) / S^1 \cong Z_{\max}$ for a sufficiently small $\epsilon$.
        Similarly, we can easily see that $$e_{Z_{\min}} = 1 \otimes u^2 + c_1(\nu_{Z_{\min}}) \otimes u $$
        by Lemma \ref{lemma : computation of eauivariant Euler class} where $\nu_{Z_{\min}}$ is the normal bundle over $Z_{\min}$.

        Therefore,
        \begin{displaymath}
            \begin{array}{lll}
                HR_2(f^*(\widetilde{\alpha}), f^*(\widetilde{\alpha})) & = & \displaystyle \int_{Z_{\max}} \frac{(\widetilde{\alpha}|_{Z_{\max}})^2 ([\widetilde{\omega}_H]|_{Z_{\max}})}{e_{Z_{\max}}} \\[1em]
                & = & \displaystyle \int_{Z_{\max}} \frac{(\alpha' \otimes 1)^2 ([\omega_1] \otimes 1 - 1 \otimes u)}{-1 \otimes u - e \otimes 1} \\[1em]
                & = & \displaystyle\int_{Z_{\max}} \frac{-(\alpha'^2 \otimes u)(1 \otimes u^2 - e \otimes u + e^2 \otimes 1)}{(-1 \otimes u - e \otimes 1)(1 \otimes u^2 - e \otimes u + e^2 \otimes 1)} \\[1em]
                & = & \displaystyle \int_{Z_{\max}} - \frac{\alpha'^2 \otimes u^3}{ -1 \otimes u^3 } \\[1em]
                & = & \displaystyle \int_{Z_{\max}} \alpha'^2. \\[1em]
            \end{array}
        \end{displaymath}
        But by dimensional reason, we have
        $$ 0 = \int_M \widetilde{\alpha}^2 = \int_{Z_{\max}} \frac{(\alpha' \otimes 1)^2}{-1 \otimes u - e \otimes 1} = - \int_{Z_{\max}} \frac{\alpha'^2 \otimes u^2}{1 \otimes u^3} = -\frac{1}{u} \int_{Z_{\max}} \alpha'^2. $$
        Hence this completes the proof of Lemma \ref{lemma : in proof of Proposition 1.8}.
    \end{proof}

    Now, it remains to show that $HR_2(f^*(\widetilde{\alpha}), f^*(\widetilde{\beta})) \neq 0.$ Note that $\widetilde{\beta}|_{Z_{\min}} = 0$ and $\widetilde{\beta}|_{Z_{\max}} = e_{Z_{\max}}$ by Theorem \ref{theorem : canonical basis}. Hence we have
    \begin{displaymath}
    \begin{array}{lll}
        HR_2(f^*(\widetilde{\alpha}), f^*(\widetilde{\beta})) & = & \displaystyle \int_M \widetilde{\alpha}\widetilde{\beta}[\widetilde{\omega}_H] \\[1em]
        & = & \displaystyle \int_{Z_{\min}} \frac{(\widetilde{\alpha}|_{Z_{\min}})(\widetilde{\beta}|_{Z_{\min}})\cdot ([\omega_0] \otimes 1)}{e_{Z_{\min}}} + \displaystyle \int_{Z_{\max}} \frac{(\widetilde{\alpha}|_{Z_{\max}})(\widetilde{\beta}|_{Z_{\max}}) ([\omega_1] \otimes 1 - 1\otimes u)}{e_{Z_{\max}}}\\[1em]
        & = & \displaystyle \int_{Z_{\max}} \frac{(\widetilde{\alpha}|_{Z_{\max}})(\widetilde{\beta}|_{Z_{\max}}) ([\omega_1] \otimes 1 - 1 \otimes u)}{e_{Z_{\max}}} \\[1em]
        & = & \displaystyle \int_{Z_{\max}} \frac{(\widetilde{\alpha}|_{Z_{\max}})(e_{Z_{\max}}) ([\omega_1] \otimes 1 - 1 \otimes u)}{e_{Z_{\max}}}\\[1em]
        & = & \displaystyle \int_{Z_{\max}} (\widetilde{\alpha}|_{Z_{\max}}) ([\omega_1] \otimes 1).\\
    \end{array}
    \end{displaymath}
    On the other hand, by dimensional reason, we have
    \begin{displaymath}
    \begin{array}{lll}
        0 & = & \displaystyle \int_M \widetilde{\alpha}[\widetilde{\omega}_H] = \int_{Z_{\max}} \frac{(\widetilde{\alpha}|_{Z_{\max}})([\omega_1] \otimes 1 - 1 \otimes u)}{-1 \otimes u - e \otimes 1}\\[1em]
        & = & \displaystyle \int_{Z_{\max}} \frac{(\widetilde{\alpha}|_{Z_{\max}})([\omega_1] \otimes 1 - 1 \otimes u )( 1 \otimes u^2 - e \otimes u + e^2 \otimes 1)}{(-1 \otimes u - e \otimes 1)(1 \otimes u^2 -e \otimes u + e^2 \otimes 1)} \\[1em]
        & = & \displaystyle \int_{Z_{\max}} \frac{(\widetilde{\alpha}|_{Z_{\max}})[\omega_1] \otimes u^2 + \widetilde{\alpha}|_{Z_{\max}} \cdot e \otimes u^2 }{-1 \otimes u^3} \\
    \end{array}
    \end{displaymath}
    so that $$\displaystyle \int_{Z_{\max}} (\widetilde{\alpha}|_{Z_{\max}})[\omega_1] = -\int_{Z_{\max}} \widetilde{\alpha}|_{Z_{\max}} \cdot e. $$
    Also by dimensional reason, we have
    \begin{displaymath}
    \begin{array}{lll}
        0 & = & \displaystyle \int_M \widetilde{\alpha} = \int_{Z_{\min}} \frac{\widetilde{\alpha}|_{Z_{\min}}}{e_{Z_{\min}}} + \int_{Z_{\max}} \frac{\widetilde{\alpha}|_{Z_{\max}}}{-1 \otimes u - e \otimes 1}\\[1em]
        & = & \displaystyle \int_{Z_{\min}} \frac{\alpha}{1 \otimes u^2 + c_1(\nu_{Z_{\min}}) \otimes u} + \int_{Z_{\max}} \frac{\widetilde{\alpha}|_{Z_{\max}}}{-1 \otimes u - e \otimes 1}\\[1em]
        & = & \displaystyle \int_{Z_{\min}} \frac{\alpha \otimes u}{1 \otimes u^3} + \int_{Z_{\max}} \frac{(\widetilde{\alpha}|_{Z_{\max}})e \otimes u}{1 \otimes u^3}\\[1em]
        & = & \displaystyle \frac{1}{u^2} + \frac{1}{u^2}\int_{Z_{\max}} (\widetilde{\alpha}|_{Z_{\max}}) e.\\
    \end{array}
    \end{displaymath}
    Consequently, we have $HR_2(f^*(\widetilde{\alpha}), f^*(\widetilde{\beta})) = 1$ so that $HR_2$ is non-singular.

    To show that $HR_1 : H^1(M) \times H^1(M) \rightarrow \R$ is non-singular, let us consider a symplectic basis $\frak{A} = \{\alpha_1, \cdots, \alpha_{2g}\}$ of $H^1(Z_{\min})$ with respect to the intersection product on $Z_{\min}$ so that the associate matrix $Q(\frak{A})$ is given by
    \begin{equation}
        Q(\frak{A}) = \left(\begin{array}{cc}
        0 & I_g\\

        -I_g & 0\\
    \end{array}\right)
    \end{equation}
    where $g$ is a genus of $Z_{\min}.$
    We denote by $\widetilde{\alpha}_i \in H^1_{S^1}(M)$ the canonical class with respect to $\alpha_i$ for each $i=1, 2, \cdots, 2g$. Then
    the set $\{f^*(\widetilde{\alpha}_1), \cdots, f^*(\widetilde{\alpha}_{2g}) \}$ forms an $\R$-basis of $H^1(M)$ by Theorem \ref{theorem : canonical basis}.
    The following lemma induces the non-singularity of $HR_1$ so that it finishes the proof of Proposition \ref{proposition : off-topic}.
    \begin{lemma}
        For each $i$ and $j$, we have $HR_1(f^*(\widetilde{\alpha_i}), f^*(\widetilde{\alpha_j})) = \int_{Z_{\min}} \alpha_i \alpha_j$ so that the associate matrix of $HR_1$ with respect to $\{ f^*(\alpha_i) \}_i$ is $Q(\frak{A})$. In particular, $HR_1$ is non-singular.
    \end{lemma}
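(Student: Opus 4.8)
The plan is to evaluate $HR_1(f^*(\widetilde{\alpha_i}),f^*(\widetilde{\alpha_j}))$ by localization, in the same spirit as the $HR_2$ computation above, but with a better choice of equivariant extension of $\omega$. By the identity (\ref{equation : equivariant Hodge-Riemann bilnear form}) one has $HR_1(f^*(\widetilde{\alpha_i}),f^*(\widetilde{\alpha_j}))=\int_M\widetilde{\alpha_i}\widetilde{\alpha_j}\,\widetilde{\omega}^{2}$ for \emph{any} equivariant extension $\widetilde{\omega}$ with $f^*[\widetilde{\omega}]=[\omega]$. Instead of $[\widetilde{\omega}_H]$ I would use the equivariant symplectic class $[\widetilde{\omega}_{H-1}]$ associated to the shifted moment map $H-1\colon M\to[-1,0]$; by Proposition \ref{proposition : restriction of symplectic class} applied to $H-1$ this satisfies $[\widetilde{\omega}_{H-1}]|_{Z_{\min}}=[\omega_0]\otimes 1+1\otimes u$ and, crucially, $[\widetilde{\omega}_{H-1}]|_{Z_{\max}}=[\omega_1]\otimes 1$ with no $u$-term, because $Z_{\max}$ now sits at the level $0$.

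Applying Theorem \ref{localization}, $HR_1(f^*(\widetilde{\alpha_i}),f^*(\widetilde{\alpha_j}))$ is the sum of an integral over $Z_{\min}$ and an integral over $Z_{\max}$. The $Z_{\max}$-term should vanish for dimensional reasons: by Theorem \ref{theorem : canonical basis}(3) the restrictions $\widetilde{\alpha_i}|_{Z_{\max}},\widetilde{\alpha_j}|_{Z_{\max}}$ have $H^*(BS^1)$-degree less than $\mathrm{ind}(Z_{\max})=2$ and total degree $1$, hence lie in $H^1(Z_{\max})\otimes H^0(BS^1)$, and since $[\widetilde{\omega}_{H-1}]|_{Z_{\max}}=[\omega_1]\otimes 1\in H^2(Z_{\max})\otimes H^0(BS^1)$ the numerator of the $Z_{\max}$-integrand is a class in $H^{6}(Z_{\max})=0$ (as $\dim_{\R}Z_{\max}=4$).

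For the $Z_{\min}$-term: since $\mathrm{ind}(Z_{\min})=0$, the negative normal bundle of $Z_{\min}$ has rank zero, so $e^-_{Z_{\min}}=1$ and Theorem \ref{theorem : canonical basis}(2) gives $\widetilde{\alpha_i}|_{Z_{\min}}=\alpha_i\otimes 1$, $\widetilde{\alpha_j}|_{Z_{\min}}=\alpha_j\otimes 1$. Using $[\omega_0]^2\in H^4(Z_{\min})=0$ and discarding the summand proportional to $\alpha_i\alpha_j[\omega_0]$ (degree $4$ on the surface $Z_{\min}$), the numerator reduces to $(\alpha_i\alpha_j)\otimes u^2$. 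The action is semi-free, so by Lemma \ref{lemma : computation of eauivariant Euler class} (weight $+1$ on $\nu_{Z_{\min}}$, and $c_2(\nu_{Z_{\min}})=0$) one has $e_{Z_{\min}}=1\otimes u^2+c_1(\nu_{Z_{\min}})\otimes u$; expanding $e_{Z_{\min}}^{-1}=1\otimes u^{-2}-c_1(\nu_{Z_{\min}})\otimes u^{-3}$ and again dropping every term of $H^*(Z_{\min})$-degree greater than $2$ (in particular the one involving $c_1(\nu_{Z_{\min}})\alpha_i\alpha_j$), the integrand collapses to $\alpha_i\alpha_j\otimes 1$, so the $Z_{\min}$-term equals $\int_{Z_{\min}}\alpha_i\alpha_j$. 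This would give $HR_1(f^*(\widetilde{\alpha_i}),f^*(\widetilde{\alpha_j}))=\int_{Z_{\min}}\alpha_i\alpha_j$, whence the Gram matrix of $HR_1$ in the basis $\{f^*(\widetilde{\alpha_i})\}$ is the intersection matrix $Q(\frak{A})$, which is non-singular; so $HR_1$ is non-singular and the proof of Proposition \ref{proposition : off-topic} is complete.

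The essential point — and, in a sense, the only real step — is the choice of equivariant extension: using the naive extension $[\widetilde{\omega}_H]$ leaves a surviving $u$-term on $Z_{\max}$ and forces one to identify the a priori unknown class $\widetilde{\alpha_i}|_{Z_{\max}}\in H^1(Z_{\max})$ (which is doable, through the gradient flow), whereas placing $Z_{\max}$ at level $0$ annihilates the $Z_{\max}$-contribution outright and reduces everything to the elementary $Z_{\min}$-integral. After that the work is the same degree bookkeeping in $H^*(Z_{\min})\otimes H^*(BS^1)$ already carried out for $HR_2$.
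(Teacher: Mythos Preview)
Your argument is correct and, in fact, cleaner than the paper's. The paper works with the unshifted equivariant class $[\widetilde{\omega}_H]$, so the $Z_{\max}$-contribution does \emph{not} vanish immediately; instead it derives two auxiliary identities by localizing the degree-$2$ and degree-$4$ classes $\widetilde{\alpha_i}\widetilde{\alpha_j}$ and $\widetilde{\alpha_i}\widetilde{\alpha_j}[\widetilde{\omega}_H]$ (both integrate to zero for dimensional reasons) to obtain
\[
\int_{Z_{\min}}\alpha_i\alpha_j=-\int_{Z_{\max}}\widetilde{\alpha_i}|_{Z_{\max}}\widetilde{\alpha_j}|_{Z_{\max}}\cdot e=\int_{Z_{\max}}\widetilde{\alpha_i}|_{Z_{\max}}\widetilde{\alpha_j}|_{Z_{\max}}\cdot[\omega_1],
\]
and then feeds these into the direct localization of $\int_M\widetilde{\alpha_i}\widetilde{\alpha_j}[\widetilde{\omega}_H]^2$. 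Your trick of replacing $H$ by $H-1$ sets the $u$-coefficient on $Z_{\max}$ to zero, so the $Z_{\max}$-numerator lands in $H^6(Z_{\max})=0$ and disappears outright; what remains is exactly the elementary $Z_{\min}$-integral you wrote down. Both approaches rely on the same structural facts (Theorem \ref{theorem : canonical basis}(3) forcing $\widetilde{\alpha_i}|_{Z_{\max}}\in H^1(Z_{\max})\otimes H^0(BS^1)$, and the formula for $e_{Z_{\min}}$), but yours bypasses the two auxiliary computations entirely at the cost of a single observation about the freedom in choosing the equivariant extension.
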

    \begin{proof}
        By dimensional reason, we have
    \begin{displaymath}
        \begin{array}{lll}
            0 & = & \displaystyle \int_M \widetilde{\alpha}_i\widetilde{\alpha}_j = \int_{Z_{\min}} \frac{\alpha_i \alpha_j}{e_{Z_{\min}}} + \int_{Z_{\max}} \frac{\widetilde{\alpha}_i|_{Z_{\max}} \cdot \widetilde{\alpha}_j|_{Z_{\max}}}{-1 \otimes u - e \otimes 1}\\[1em]
            & = & \displaystyle \int_{Z_{\min}} \frac{\alpha_i \alpha_j}{1 \otimes u^2 + c_1(Z_{\min}) \otimes u} + \int_{Z_{\max}} \frac{\widetilde{\alpha}_i|_{Z_{\max}} \cdot \widetilde{\alpha}_j|_{Z_{\max}}}{-1 \otimes u - e \otimes 1}\\[1em]
            & = & \displaystyle \frac{1}{u^2}\int_{Z_{\min}} \alpha_i \alpha_j + \frac{1}{u^2}\int_{Z_{\max}} \widetilde{\alpha}_i|_{Z_{\max}} \cdot \widetilde{\alpha}_j|_{Z_{\max}} \cdot e\\
        \end{array}
    \end{displaymath}
    and

    \begin{displaymath}
        \begin{array}{lll}
            0 & = & \displaystyle \int_M \widetilde{\alpha_i}\widetilde{\alpha_j}[\widetilde{\omega}_H]\\[1em]
              & = & \displaystyle \int_{Z_{\min}} \frac{\widetilde{\alpha_i}|_{Z_{\min}} \cdot \widetilde{\alpha_j}|_{Z_{\min}} \cdot ([\omega_0] \otimes 1)}{1 \otimes u^2 + c_1(\nu_{Z_{\min}}) \otimes u } + \int_{Z_{\max}} \frac{\widetilde{\alpha_i}|_{Z_{\max}} \cdot \widetilde{\alpha_j}|_{Z_{\max}} \cdot ([\omega_1] \otimes 1 - 1 \otimes u)}{-1 \otimes u - e \otimes 1}\\[1em]
              & = & \displaystyle \int_{Z_{\min}} \frac{\alpha_i \alpha_j [\omega_0] \otimes 1}{1 \otimes u^2 + c_1(\nu_{Z_{\min}}) \otimes u } + \int_{Z_{\max}} \frac{\widetilde{\alpha_i}|_{Z_{\max}} \cdot \widetilde{\alpha_j}|_{Z_{\max}} \cdot ([\omega_1] \otimes 1 - 1 \otimes u)}{-1 \otimes u - e \otimes 1}\\[1em]
              & = & \displaystyle 0 - \frac{1}{u} \int_{Z_{\max}} \widetilde{\alpha_i}|_{Z_{\max}} \cdot \widetilde{\alpha_j}|_{Z_{\max}}\cdot [\omega_1] - \frac{1}{u}\int_{Z_{\max}} \widetilde{\alpha_i}|_{Z_{\max}} \cdot \widetilde{\alpha_j}|_{Z_{\max}} \cdot e.\\
        \end{array}
    \end{displaymath}
    Combining two equations above, we have
    $$\int_{Z_{\min}} \alpha_i \alpha_j = \int_{Z_{\max}} \widetilde{\alpha_i}|_{Z_{\max}} \widetilde{\alpha_j}|_{Z_{\max}}[\omega_1].$$
    Therefore
    \begin{displaymath}
        \begin{array}{lll}
           HR_1(f^*(\widetilde{\alpha_i}), f^*(\widetilde{\alpha_j}))
           & = & \displaystyle \int_M \widetilde{\alpha_i}\widetilde{\alpha_j}[\widetilde{\omega}_H]^2 \\[1em]
           & = & \displaystyle \int_{Z_{\min}} \frac{\alpha_i\alpha_j[\omega_0] \otimes 1}{1 \otimes u^2 + c_1(\nu_{Z_{\min}}) \otimes u} \\[1em]
           &   & + \displaystyle\int_{Z_{\max}} \frac{\widetilde{\alpha_i}|_{Z_{\max}} \cdot \widetilde{\alpha_j}|_{Z_{\max}} \cdot ([\omega_1] \otimes 1 - 1 \otimes u)^2}{-u - e}\\[1em]
            & = & \displaystyle 0 + 2 \int_{Z_{\max}} \widetilde{\alpha_i}|_{Z_{\max}} \widetilde{\alpha_j}|_{Z_{\max}} [\omega_1] + \int_{Z_{\max}} \widetilde{\alpha_i}|_{Z_{\max}} \widetilde{\alpha_j}|_{Z_{\max}} e\\[1em]
            & = & \displaystyle \int_{Z_{\min}} \alpha_i \alpha_j.
        \end{array}
    \end{displaymath}

    \end{proof}
Therefore, we proved $HR_1$ and $HR_2$ are non-singular so that $\omega$ satisfies the hard Lefschetz property. This finishes the proof of Proposition \ref{proposition : off-topic}.
\end{proof}

Now, it remains to prove Proposition \ref{proposition : if and only if}. Let $(M,\omega)$ be a six-dimensional simple Hamiltonian $S^1$-manifold with a moment map $H : M \rightarrow [0,1]$ such that $Z_{\min}$ and $Z_{\max}$ are all four-dimensional.

\begin{remark}
    As we have seen in Section 1, we may identify $Z_{\min}$ with $Z_{\max}$ along the gradient flow of $H$. In the rest of this section, we always assume that $H^*(Z_{\min}) = H^*(Z_{\max})$ via this identification described in Section 1.
\end{remark}
Let us fix a basis $\frak{B} = \{\alpha_1, \cdots, \alpha_{b_2} \}$ of $H^2(Z_{\min})$ where $b_2 = b_2(Z_{\min})$. Let $\widetilde{\alpha}_i \in H^2_{S^1}(M)$ be the canonical class with respect to $\alpha_i \in H^2(Z_{\min})$ for each $i=1, \cdots, b_2,$ and let $\widetilde{\beta} \in H^2_{S^1}(M)$ be the canonical class with respect to $\beta = 1 \in H^0(Z_{\max})$. Then the set $\frak{B}_2 = \{f^*(\widetilde{\alpha}_1), \cdots, f^*(\widetilde{\alpha}_{b_2}), f^*(\widetilde{\beta}) \}$ forms a basis of $H^2(M)$ by Theorem \ref{theorem : canonical basis}.
Then the matrix $HR_2(\frak{B}_2)$ associated to the second Hodge-Riemann form $HR_2 : H^2(M) \times H^2(M) \rightarrow \R$ with respect to the basis $\frak{B}_2$ is of the following form
\begin{equation}
    HR_2(\frak{B}_2) = \left(\begin{array}{c|c}
        A(\alpha_i,\alpha_j) & B(\alpha_i,\beta)\\
        \hline
        C(\alpha_j,\beta) & D(\beta,\beta)\\
    \end{array}\right)
    \end{equation}
    where $A$ is a $(b_2 \times b_2)$-matrix with entries $A_{i,j} = HR_2(f^*(\widetilde{\alpha}_i), f^*(\widetilde{\alpha}_j))$, $B$ is
    a $(1 \times b_2)$-matrix with $B_{b_2 + 1, j} =  HR_2(f^*(\widetilde{\beta}), f^*(\widetilde{\alpha}_j))$, $C$ is a $b_2 \times 1$-matrix with
    $C_{i,b_2+1} = HR_2(f^*(\widetilde{\alpha}_i), f^*(\widetilde{\beta}))$, and $D = HR_2(f^*(\widetilde{\beta}), f^*(\widetilde{\beta}))$.

\begin{lemma}\label{lemma : Euler class}
    Let $e_{Z_{\min}}$ ($e_{Z_{\max}}$, respectively) be the equivariant Euler class of the normal bundle over $Z_{\min}$($Z_{\max}$, respectively). Then we have $e_{Z_{\min}} = 1 \otimes u + e \otimes 1$ ($e_{Z_{\max}} = -1 \otimes u - e \otimes 1$, respectively) where $e \in H^2(Z_{\min})$ is the first Chern class of the normal bundle over $Z_{\min}$ on $M$.
\end{lemma}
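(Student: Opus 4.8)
The plan is to read off each equivariant Euler class from Lemma~\ref{lemma : computation of eauivariant Euler class}, once we pin down (i) the $S^1$-weight on the relevant normal bundle and (ii) its ordinary first Chern class. Since $\dim Z_{\min} = \dim Z_{\max} = 4$ and $\dim M = 6$, the normal bundles $\nu_{Z_{\min}}$ and $\nu_{Z_{\max}}$ are complex line bundles on which $S^1$ acts fibrewise, and for a complex line bundle the equivariant Euler class coincides with the equivariant first Chern class. So it suffices to apply Lemma~\ref{lemma : computation of eauivariant Euler class} to each of them.

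First I would check that the action is semi-free. The single $S^1$-weight $a$ on $\nu_{Z_{\min}}$ is positive because $Z_{\min}$ is the minimum of $H$; if $a > 1$, then the subgroup $\Z/a \subset S^1$ acts trivially on a neighbourhood of $Z_{\min}$, so $M^{\Z/a}$ would be a closed submanifold containing a nonempty open set, hence all of the connected manifold $M$, contradicting effectiveness. Thus $a = 1$, and the same argument at the maximum $Z_{\max}$ gives weight $-1$ on $\nu_{Z_{\max}}$. Consequently, writing $e := c_1(\nu_{Z_{\min}}) \in H^2(Z_{\min})$, Lemma~\ref{lemma : computation of eauivariant Euler class} with $\lambda = 1$ gives $e_{Z_{\min}} = c_1^{S^1}(\nu_{Z_{\min}}) = e \otimes 1 + 1 \otimes u = 1 \otimes u + e \otimes 1$.

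For $Z_{\max}$ it remains to identify $c_1(\nu_{Z_{\max}})$ under the identification $Z_{\min} \cong Z_{\max}$ coming from the gradient flow of $H$. Since $H$ has no critical value in $(0,1)$, the level sets $H^{-1}(\epsilon)$ and $H^{-1}(1-\epsilon)$ are $S^1$-equivariantly diffeomorphic for small $\epsilon$; under the gradient-flow identification this common $S^1$-manifold $Q$ is simultaneously the unit circle bundle of $\nu_{Z_{\min}}$ (with the fibrewise action of weight $+1$) and the unit circle bundle of $\nu_{Z_{\max}}$ (with the fibrewise action of weight $-1$). Hence $\nu_{Z_{\max}}$ is the conjugate line bundle of $\nu_{Z_{\min}}$, so $c_1(\nu_{Z_{\max}}) = -e$. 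Applying Lemma~\ref{lemma : computation of eauivariant Euler class} with $\lambda = -1$ then yields $e_{Z_{\max}} = c_1^{S^1}(\nu_{Z_{\max}}) = -e \otimes 1 + 1 \otimes (-1)u = -1 \otimes u - e \otimes 1$, as claimed.

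The only genuinely delicate point is the last paragraph: making sure the weight and the complex orientation of $\nu_{Z_{\max}}$ are read off consistently from the equivariant diffeomorphism $H^{-1}(\epsilon) \cong H^{-1}(1-\epsilon)$, so that the sign in $c_1(\nu_{Z_{\max}}) = -e$ (and hence both signs in $e_{Z_{\max}}$) comes out correctly; everything else is a direct substitution into Lemma~\ref{lemma : computation of eauivariant Euler class}.
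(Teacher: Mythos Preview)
Your proposal is correct and follows essentially the same route as the paper: determine the weights ($+1$ at $Z_{\min}$, $-1$ at $Z_{\max}$) from effectiveness/semifreeness, identify the ordinary first Chern classes ($e$ and $-e$), and plug into Lemma~\ref{lemma : computation of eauivariant Euler class}. The paper's proof is a three-line sketch that asserts $c_1(\nu_{Z_{\max}}) = -e$ ``by the assumption'' (i.e.\ the gradient-flow identification $H^*(Z_{\min}) = H^*(Z_{\max})$ set up in the preceding remark); your version supplies the details, in particular the conjugate-bundle argument for that sign and the explicit reason the weight must be $\pm 1$, but the underlying strategy is identical.
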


\begin{proof}
    Note that the first Chern class of the normal bundle over $Z_{\min}$ ($\Z_{\max}$, respectively) is $e$ ($-e$, respectively) by the assumption, and the action on $M$ is semifree because of the effectiveness of the action. Hence the weight at $Z_{\min}$ is $1$ and $-1$ at $Z_{\max}$. The rest of the proof is straightforward by Lemma \ref{lemma : computation of eauivariant Euler class}.
\end{proof}

\begin{lemma}\label{lemma : restriction of widetilde alpha}
    For each $\widetilde{\alpha}_i$, we have $\widetilde{\alpha}_i|_{Z_{\min}} = \alpha_i \otimes 1$ and $\widetilde{\alpha}_i|_{Z_{\max}} = \beta_i \otimes 1$ for some $\beta_i \in H^2(Z_{\max})$.
\end{lemma}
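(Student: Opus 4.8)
The plan is to read off both equalities directly from the three defining properties of the canonical class $\widetilde{\alpha}_i$ recorded in Theorem~\ref{theorem : canonical basis}. The relevant combinatorics of our situation is that the fixed point set is exactly $\{Z_{\min},Z_{\max}\}$ with $Z_{\min}=H^{-1}(0)$, $Z_{\max}=H^{-1}(1)$, both of real dimension four, so the Morse indices are $k_{Z_{\min}}=0$ and $k_{Z_{\max}}=\dim M-\dim Z_{\max}=2$. For the restriction to $Z_{\min}$, property~(1) of Theorem~\ref{theorem : canonical basis} is vacuous since no fixed component lies below $Z_{\min}$, and property~(2) gives $\widetilde{\alpha}_i|_{Z_{\min}}=\alpha_i\cup e^{-}_{Z_{\min}}$; since $Z_{\min}$ attains the minimum of $H$ its negative normal bundle has rank zero, so $e^{-}_{Z_{\min}}=1\in H^0_{S^1}(Z_{\min})$, and therefore $\widetilde{\alpha}_i|_{Z_{\min}}=\alpha_i\otimes 1$.

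For $Z_{\max}$ I would write a general degree-two equivariant class as $\widetilde{\alpha}_i|_{Z_{\max}}=\beta_i\otimes 1+c_i\otimes u$ with $\beta_i\in H^2(Z_{\max})$ and $c_i\in H^0(Z_{\max})\cong\R$, using $H^2_{S^1}(Z_{\max})\cong\bigl(H^2(Z_{\max})\otimes H^0(BS^1)\bigr)\oplus\bigl(H^0(Z_{\max})\otimes H^2(BS^1)\bigr)$. Property~(3) of Theorem~\ref{theorem : canonical basis} forces the $H^*(BS^1)$-degree of $\widetilde{\alpha}_i|_{Z_{\max}}$ to be strictly less than $k_{Z_{\max}}=2$, which kills the $u$-term, i.e.\ $c_i=0$, leaving $\widetilde{\alpha}_i|_{Z_{\max}}=\beta_i\otimes 1$. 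This is the same degree-counting device already used in the proof of Lemma~\ref{lemma : in proof of Proposition 1.8}.

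I do not anticipate any real obstacle here: the statement is a bookkeeping consequence of Theorem~\ref{theorem : canonical basis}. The only two points that deserve an explicit word are that the negative normal bundle of a minimal fixed component is zero-dimensional (so its equivariant Euler class is $1$, which is what removes the Euler-class factor on $Z_{\min}$) and that the Morse index of a four-dimensional maximal fixed component inside a six-manifold is exactly $2$ (so that the degree bound in property~(3) is sharp enough to annihilate the $u$-coefficient on $Z_{\max}$).
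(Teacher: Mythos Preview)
Your proposal is correct and follows essentially the same argument as the paper: both use property~(2) of Theorem~\ref{theorem : canonical basis} together with the vanishing of the negative normal bundle at $Z_{\min}$ to get $\widetilde{\alpha}_i|_{Z_{\min}}=\alpha_i\otimes 1$, and both use the degree bound of property~(3) with $\mathrm{ind}(Z_{\max})=2$ to kill the $u$-term in $\widetilde{\alpha}_i|_{Z_{\max}}$. Your version spells out the decomposition $\beta_i\otimes 1+c_i\otimes u$ explicitly, whereas the paper phrases it as ``the $H^*(BS^1)$-degree is always even, hence must be zero,'' but this is the same reasoning.
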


\begin{proof}
    Since $Z_{\min}$ is the minimum, the negative normal bundle of $Z_{\min}$ is of rank zero. Hence by Theorem \ref{theorem : canonical basis}, we have $\widetilde{\alpha}_i|_{Z_{\min}} = (\alpha_i \otimes 1) \cup e^-_{Z_{\min}} = \alpha_i \otimes 1$. For $Z_{\max}$, the restriction of $\widetilde{\alpha}_i$ to $Z_{\max}$ has of $H^*(BS^1)$-degree less than $\mathrm{ind}(Z_{\max}) = 2$  by Theorem \ref{theorem : canonical basis} again. Because $H^*(BS^1)$-degree is always even ($\deg u = 2$), so the $H^*(BS^1)$-degree of $\widetilde{\alpha}_i|_{Z_{\max}}$ is zero. Hence $\widetilde{\alpha}_i|_{Z_{\max}} = \beta_i \otimes 1$ for some $\beta_i \in H^2(Z_{\max})$.
\end{proof}

\begin{lemma}\label{lemma : same intersection}
    Let $\{\beta_1, \cdots, \beta_{b_2} \}$ be given in Lemma \ref{lemma : restriction of widetilde alpha}. For each $0 \leq i,j \leq b_2(Z_{\min})$, we have
        $$\int_{Z_{\min}} \alpha_i \alpha_j = \int_{Z_{\max}} \beta_i \beta_j.$$

\end{lemma}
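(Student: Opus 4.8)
The plan is to read the identity off from the Atiyah--Bott--Berline--Vergne localization formula (Theorem \ref{localization}) applied to the degree-four equivariant class $\widetilde{\alpha}_i\widetilde{\alpha}_j \in H^4_{S^1}(M)$. The starting observation is that $\int_M \widetilde{\alpha}_i\widetilde{\alpha}_j$ lies in $H^{4-\dim M}(BS^1) = H^{-2}(BS^1) = 0$, hence vanishes; the content of the lemma is then nothing but the resulting cancellation between the contributions of $Z_{\min}$ and $Z_{\max}$.

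Carrying this out, I would first substitute the inputs already at hand: by Lemma \ref{lemma : restriction of widetilde alpha} we have $\widetilde{\alpha}_i|_{Z_{\min}} = \alpha_i \otimes 1$ and $\widetilde{\alpha}_i|_{Z_{\max}} = \beta_i \otimes 1$ (and likewise for $j$), while by Lemma \ref{lemma : Euler class} the equivariant Euler classes of the normal bundles are $e_{Z_{\min}} = 1\otimes u + e\otimes 1$ and $e_{Z_{\max}} = -(1\otimes u + e\otimes 1)$ for one and the same $e \in H^2(Z_{\min}) \cong H^2(Z_{\max})$. Theorem \ref{localization} then becomes
\[
0 \;=\; \int_M \widetilde{\alpha}_i\widetilde{\alpha}_j \;=\; \int_{Z_{\min}} \frac{\alpha_i\alpha_j\otimes 1}{1\otimes u + e\otimes 1} \;-\; \int_{Z_{\max}} \frac{\beta_i\beta_j\otimes 1}{1\otimes u + e\otimes 1}.
\]
The next step is to expand $(1\otimes u + e\otimes 1)^{-1} = 1\otimes \tfrac{1}{u} - e\otimes\tfrac{1}{u^2} + e^2\otimes\tfrac{1}{u^3}$ as a geometric series in the localized coefficient ring; it terminates because $H^{>4}(Z_{\min}) = 0$. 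Since $Z_{\min}$ and $Z_{\max}$ are four-dimensional and $\alpha_i\alpha_j$, $\beta_i\beta_j$ are already of top degree four, only the leading term $1\otimes\tfrac{1}{u}$ of the series survives the push-forwards $\int_{Z_{\min}}$ and $\int_{Z_{\max}}$, so that $\int_{Z_{\min}} \frac{\alpha_i\alpha_j\otimes 1}{e_{Z_{\min}}} = \tfrac{1}{u}\int_{Z_{\min}}\alpha_i\alpha_j$ and $\int_{Z_{\max}} \frac{\beta_i\beta_j\otimes 1}{e_{Z_{\max}}} = \tfrac{1}{u}\int_{Z_{\max}}\beta_i\beta_j$. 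Substituting these back gives $0 = \tfrac{1}{u}\left(\int_{Z_{\min}}\alpha_i\alpha_j - \int_{Z_{\max}}\beta_i\beta_j\right)$, which is the claim. When $i$ or $j$ equals $0$, so that $\alpha_0 = \beta_0 = 1$, both sides of the asserted identity are integrals of classes of degree at most two over a four-manifold, hence both vanish and there is nothing to prove.

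The argument is routine; the only point that needs genuine care is the degree bookkeeping --- checking both that $\int_M\widetilde{\alpha}_i\widetilde{\alpha}_j$ vanishes for dimensional reasons and that no higher-order term in the expansion of $e_{Z_{\min}}^{-1}$ or $e_{Z_{\max}}^{-1}$ contributes after integrating over a four-dimensional fixed component. Beyond this I do not expect any real obstacle, and the proof should be as short as the localization computations already carried out in the proof of Proposition \ref{proposition : off-topic}.
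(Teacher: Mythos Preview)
Your argument is exactly the paper's: apply Theorem \ref{localization} to $\widetilde{\alpha}_i\widetilde{\alpha}_j$, use that $\int_M$ of a degree-four class on a six-manifold vanishes, plug in Lemmas \ref{lemma : Euler class} and \ref{lemma : restriction of widetilde alpha}, and expand the Euler-class denominators so that only the $\tfrac{1}{u}$ term survives. The only blemish is a sign slip in your sentence after the displayed equation --- $\int_{Z_{\max}}\frac{\beta_i\beta_j\otimes 1}{e_{Z_{\max}}}$ equals $-\tfrac{1}{u}\int_{Z_{\max}}\beta_i\beta_j$, not $+\tfrac{1}{u}$ --- but your displayed formula and final conclusion already have the signs right.
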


\begin{proof}
    By applying the localization theorem \ref{localization} to $\widetilde{\alpha}_i \cdot \widetilde{\alpha}_j$, we have
    $$0 = \int_M \widetilde{\alpha}_i \cdot \widetilde{\alpha}_j = \int_{Z_{\min}} \frac{\widetilde{\alpha}_i|_{Z_{\min}} \cdot \widetilde{\alpha}_j|_{Z_{\min}}}{e_{Z_{\min}}} + \int_{Z_{\max}} \frac{\widetilde{\alpha}_i|_{Z_{\max}} \cdot \widetilde{\alpha}_j|_{Z_{\max}}}{e_{Z_{\max}}}. $$
    Applying Lemma \ref{lemma : Euler class} and Lemma \ref{lemma : restriction of widetilde alpha} to the equation above, we have
    $$ 0 = \int_{Z_{\min}} \frac{\alpha_i \alpha_j \otimes 1}{1 \otimes u + e \otimes 1} + \int_{Z_{\max}} \frac{\beta_i \beta_j \otimes 1}{-1 \otimes u - e \otimes 1}.$$
    Since $$\int_{Z_{\min}} \frac{(\alpha_i \alpha_j \otimes 1)(1 \otimes u^2 - e \otimes u + e^2 \otimes 1)}{(1 \otimes u + e \otimes 1)(1 \otimes u^2 - e \otimes u + e^2 \otimes 1)} = \frac{1}{u} \int_{Z_{\min}} \alpha_i \alpha_j$$ and $$\int_{Z_{\max}} \frac{(\beta_i \beta_j \otimes 1)(1 \otimes u^2 - e \otimes u + e^2 \otimes 1)}{(-1 \otimes u - e \otimes 1)(1 \otimes u^2 - e \otimes u + e^2 \otimes 1)} = -\frac{1}{u} \int_{Z_{\max}} \beta_i \beta_j$$ which finishes the proof.

%
\end{proof}

\begin{lemma}\label{lemma : alpha i and alpha j}
    Let $\{\beta_1, \cdots, \beta_{b_2} \}$ be given in Lemma \ref{lemma : restriction of widetilde alpha}.
    For each $0 \leq i,j \leq b_2(Z_{\min})$, we have $$HR_2(f^*(\widetilde{\alpha}_i), f^*(\widetilde{\alpha}_j)) = \langle \alpha_i\alpha_j, [Z_{\min}] \rangle =  \langle \beta_i \beta_j, [Z_{\max}] \rangle.$$
\end{lemma}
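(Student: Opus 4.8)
The plan is to reduce the statement to a single application of the Atiyah--Bott--Berline--Vergne localization theorem, and then close the loop with Lemma~\ref{lemma : same intersection}. Note first that Lemma~\ref{lemma : same intersection} already supplies the equality $\langle \alpha_i\alpha_j, [Z_{\min}]\rangle = \langle \beta_i\beta_j, [Z_{\max}]\rangle$, so all that remains to prove is $HR_2(f^*(\widetilde{\alpha}_i), f^*(\widetilde{\alpha}_j)) = \langle \beta_i\beta_j, [Z_{\max}]\rangle$ (equivalently, $= \langle \alpha_i\alpha_j, [Z_{\min}]\rangle$).

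First I would rewrite $HR_2(f^*(\widetilde{\alpha}_i), f^*(\widetilde{\alpha}_j))$ via (\ref{equation : equivariant Hodge-Riemann bilnear form}) as the equivariant integral $\int_M \widetilde{\alpha}_i\,\widetilde{\alpha}_j\,[\widetilde{\omega}_H]$; since $n-k = 3-2 = 1$ only a single factor of the equivariant symplectic class occurs. Then I would apply the localization theorem~\ref{localization} to express this as the sum of the contributions of $Z_{\min}$ and $Z_{\max}$, and substitute the restriction data already available: $\widetilde{\alpha}_i|_{Z_{\min}} = \alpha_i\otimes 1$ and $\widetilde{\alpha}_i|_{Z_{\max}} = \beta_i\otimes 1$ from Lemma~\ref{lemma : restriction of widetilde alpha}; $[\widetilde{\omega}_H]|_{Z_{\min}} = [\omega_0]\otimes 1$ and $[\widetilde{\omega}_H]|_{Z_{\max}} = [\omega_1]\otimes 1 - 1\otimes u$ from Proposition~\ref{proposition : restriction of symplectic class}; and $e_{Z_{\min}} = 1\otimes u + e\otimes 1$, $e_{Z_{\max}} = -1\otimes u - e\otimes 1$ from Lemma~\ref{lemma : Euler class}.

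Then the computation collapses for dimensional reasons. Inverting $e_{Z_{\min}}$ and $e_{Z_{\max}}$ in the localized ring $H^*(Z)\otimes\R(u)$ gives a geometric series in $u^{-1}$ which terminates at the $e^2$-term because $\dim Z_{\min} = \dim Z_{\max} = 4$ forces $e^3 = 0$. In the $Z_{\min}$-contribution every summand carries the factor $\alpha_i\alpha_j[\omega_0] \in H^6(Z_{\min}) = 0$, so that contribution vanishes completely. In the $Z_{\max}$-contribution the $[\omega_1]\otimes 1$ part of $[\widetilde{\omega}_H]|_{Z_{\max}}$ contributes $\beta_i\beta_j[\omega_1]\in H^6(Z_{\max}) = 0$ and drops out, leaving only the $-1\otimes u$ part; dividing by $e_{Z_{\max}}$ and discarding the terms carrying a factor $e$ or $e^2$ (which lie in $H^{\ge 6}(Z_{\max}) = 0$), the integrand reduces exactly to $\beta_i\beta_j\otimes 1$, whose integral along $Z_{\max}$ is $\langle\beta_i\beta_j,[Z_{\max}]\rangle$. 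Finally Lemma~\ref{lemma : same intersection} rewrites this as $\langle\alpha_i\alpha_j,[Z_{\min}]\rangle$, which finishes the proof. There is no genuine obstacle here, since all the hard input is contained in the preceding lemmas; the only care needed is in the bookkeeping inside the localized ring, namely checking that after the geometric-series expansion precisely the $u^0$-term with no factor of $e$ survives on the $Z_{\max}$ side while everything cancels on the $Z_{\min}$ side.
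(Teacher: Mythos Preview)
Your proposal is correct and follows essentially the same approach as the paper: rewrite $HR_2$ via (\ref{equation : equivariant Hodge-Riemann bilnear form}) as $\int_M \widetilde{\alpha}_i\widetilde{\alpha}_j[\widetilde{\omega}_H]$, localize, kill the $Z_{\min}$-term and the $[\omega_1]$-part of the $Z_{\max}$-term for degree reasons, and reduce what remains to $\int_{Z_{\max}}\beta_i\beta_j$, then invoke Lemma~\ref{lemma : same intersection}. The paper's write-up is slightly terser (it does not spell out the geometric-series expansion of $e_{Z_{\max}}^{-1}$), but the argument is the same.
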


\begin{proof}
    By applying the localization theorem \ref{localization} to $$HR_2(f^*(\widetilde{\alpha}_i), f^*(\widetilde{\alpha}_j)) = \int_M \widetilde{\alpha}_i \widetilde{\alpha}_j [\widetilde{\omega}_H],$$ we have
    $$\int_M \widetilde{\alpha}_i \widetilde{\alpha}_j [\widetilde{\omega}_H] = \int_{Z_{\min}} \frac{\widetilde{\alpha}_i|_{Z_{\min}} \cdot \widetilde{\alpha}_j|_{Z_{\min}} \cdot [\widetilde{\omega}_H]|_{Z_{\min}}}{e_{Z_{\min}}} + \int_{Z_{\max}} \frac{\widetilde{\alpha}_i|_{Z_{\max}} \cdot \widetilde{\alpha}_j|_{Z_{\max}} \cdot [\widetilde{\omega}_H]|_{Z_{\max}}}{e_{Z_{\max}}}.
    $$
    Note that $$\int_{Z_{\min}} \frac{\widetilde{\alpha}_i|_{Z_{\min}} \cdot \widetilde{\alpha}_j|_{Z_{\min}} \cdot [\widetilde{\omega}_H]|_{Z_{\min}}}{e_{Z_{\min}}} = \int_{Z_{\min}} \frac{\alpha_i \alpha_j [\omega_0] \otimes 1}{1 \otimes u + e \otimes 1} = 0$$
    and
    $$\int_{Z_{\max}} \frac{\widetilde{\alpha}_i|_{Z_{\max}} \cdot \widetilde{\alpha}_j|_{Z_{\max}} \cdot [\widetilde{\omega}_H]|_{Z_{\max}}}{e_{Z_{\max}}} = \int_{Z_{\max}} \frac{(\beta_i\beta_j \otimes 1) \cdot ([\omega_1] \otimes 1 - 1 \otimes u)}{-1 \otimes u - e \otimes 1}$$
    by Lemma \ref{lemma : restriction of widetilde alpha} and Proposition \ref{proposition : restriction of symplectic class}. Therefore,
    \begin{displaymath}
        \begin{array}{lll}
            \displaystyle \int_M \widetilde{\alpha}_i \widetilde{\alpha}_j [\widetilde{\omega}_H] & = & \displaystyle \int_{Z_{\max}} \frac{(\beta_i \beta_j \otimes 1)\cdot ([\omega_1] \otimes 1 - 1 \otimes u)}{-1 \otimes u - e \otimes 1} \\[1em]
             & = & \displaystyle \int_{Z_{\max}} \frac{(\beta_i \beta_j \otimes 1) \cdot (-1 \otimes u)}{-1 \otimes u - e \otimes 1} = \int_{Z_{\max}} \beta_i \beta_j.
        \end{array}
    \end{displaymath}
    The second equality is straightforward by Lemma \ref{lemma : same intersection}.
\end{proof}

\begin{lemma}\label{lemma : alpha and beta}
    Let $\{\beta_1, \cdots, \beta_{b_2} \}$ be given in Lemma \ref{lemma : restriction of widetilde alpha}.
    For each $i$, we have $HR_2(f^*(\widetilde{\alpha}_i), f^*(\widetilde{\beta})) = \langle \beta_i [\omega_1], [Z_{\max}] \rangle.$
\end{lemma}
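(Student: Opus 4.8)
The plan is to feed the equivariant product $\widetilde{\alpha}_i\,\widetilde{\beta}\,[\widetilde{\omega}_H]$ into the Atiyah--Bott--Berline--Vergne localization theorem (Theorem \ref{localization}), exactly as in the preceding lemmas, and then collapse the two fixed-point contributions using the restriction formulas already at our disposal.

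First I would record the restrictions of $\widetilde{\beta}$. Since $\widetilde{\beta}$ is the canonical class associated with $\beta = 1 \in H^0(Z_{\max})$ and $H(Z_{\min}) < H(Z_{\max})$, part (1) of Theorem \ref{theorem : canonical basis} gives $\widetilde{\beta}|_{Z_{\min}} = 0$, while part (2) gives $\widetilde{\beta}|_{Z_{\max}} = \beta \cup e^{-}_{Z_{\max}} = e^{-}_{Z_{\max}}$. Because $Z_{\max}$ is the maximum, $\mathrm{ind}(Z_{\max})$ equals the rank of the whole normal bundle, so its negative normal bundle is the full normal bundle and $e^{-}_{Z_{\max}} = e_{Z_{\max}} = -1\otimes u - e\otimes 1$ by Lemma \ref{lemma : Euler class}. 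Together with Lemma \ref{lemma : restriction of widetilde alpha} ($\widetilde{\alpha}_i|_{Z_{\max}} = \beta_i\otimes 1$) and Proposition \ref{proposition : restriction of symplectic class} ($[\widetilde{\omega}_H]|_{Z_{\max}} = [\omega_1]\otimes 1 - 1\otimes u$), the localization formula then reads
\[
HR_2(f^*(\widetilde{\alpha}_i), f^*(\widetilde{\beta})) = \int_M \widetilde{\alpha}_i\widetilde{\beta}[\widetilde{\omega}_H] = 0 + \int_{Z_{\max}} \frac{(\beta_i\otimes 1)\,e_{Z_{\max}}\,([\omega_1]\otimes 1 - 1\otimes u)}{e_{Z_{\max}}},
\]
the $Z_{\min}$-contribution dropping out because $\widetilde{\beta}|_{Z_{\min}} = 0$.

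The remaining step is purely formal: the factor $e_{Z_{\max}}$ in the numerator cancels the one in the denominator (a legitimate operation in the localized ring $H^*(Z_{\max})\otimes\R(u)$), leaving $\int_{Z_{\max}}(\beta_i\otimes 1)([\omega_1]\otimes 1 - 1\otimes u) = \int_{Z_{\max}}\beta_i[\omega_1] - u\int_{Z_{\max}}\beta_i$. Since $\beta_i \in H^2(Z_{\max})$ and $\dim_{\R}Z_{\max} = 4$, the integration along $Z_{\max}$ annihilates $\beta_i$ in the second summand, so only $\langle \beta_i[\omega_1], [Z_{\max}]\rangle$ survives, which is the assertion.

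I do not anticipate a real obstacle; the only point meriting a line of justification is the identification $\widetilde{\beta}|_{Z_{\max}} = e_{Z_{\max}}$ (and not the Euler class of a proper sub-bundle), which is immediate from $Z_{\max}$ being the top fixed component. All the rest is the same bookkeeping with $1\otimes u$, $e\otimes 1$, and the expansion of $(1\otimes u + e\otimes 1)^{-1}$ that already appeared in Lemmas \ref{lemma : same intersection} and \ref{lemma : alpha i and alpha j}.
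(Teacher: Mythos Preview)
Your proposal is correct and follows essentially the same route as the paper: apply localization to $\widetilde{\alpha}_i\widetilde{\beta}[\widetilde{\omega}_H]$, kill the $Z_{\min}$-term via $\widetilde{\beta}|_{Z_{\min}}=0$, cancel $e_{Z_{\max}}$ in the $Z_{\max}$-term, and observe that $\int_{Z_{\max}}\beta_i=0$ for degree reasons. If anything, you supply slightly more justification than the paper does for the identification $e^{-}_{Z_{\max}}=e_{Z_{\max}}$.
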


\begin{proof}
    Apply the localization theorem \ref{localization} again to
    $$HR_2(f^*(\widetilde{\alpha}_i), f^*(\widetilde{\beta})) = \int_M \widetilde{\alpha}_i \widetilde{\beta} [\widetilde{\omega}_H].$$
    Then
    $$
    \int_M \widetilde{\alpha}_i \widetilde{\beta} [\widetilde{\omega}_H] = \int_{Z_{\min}} \frac{\widetilde{\alpha}_i|_{Z_{\min}} \cdot \widetilde{\beta}|_{Z_{\min}} \cdot [\widetilde{\omega}_H]|_{Z_{\min}}}{e_{Z_{\min}}} + \int_{Z_{\max}} \frac{\widetilde{\alpha}_i|_{Z_{\max}} \cdot \widetilde{\beta}|_{Z_{\max}} \cdot [\widetilde{\omega}_H]|_{Z_{\max}}}{e_{Z_{\max}}}.
    $$
    By Theorem \ref{theorem : canonical basis}, we have $\widetilde{\beta}|_{Z_{\min}} = 0$ and $\widetilde{\beta}|_{Z_{\max}} = e_{Z_{\max}}$ so that
    $$\int_M \widetilde{\alpha}_i \widetilde{\beta} [\widetilde{\omega}_H] = \int_{Z_{\max}} (\beta_i \otimes 1) \cdot ([\omega_1] \otimes 1 - 1 \otimes u) = \int_{Z_{\max}} \beta_i [\omega_1].$$
\end{proof}

\begin{lemma}\label{lemma : beta beta}
    $HR_2(f^*(\widetilde{\beta}), f^*(\widetilde{\beta})) = \int_{Z_{\max}} [\omega_1]^2$ if and only if $[\omega_0]\cdot [\omega_1] = 0$.
\end{lemma}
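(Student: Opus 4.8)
The plan is to read off $HR_2(f^*(\widetilde{\beta}), f^*(\widetilde{\beta}))$ from the localization theorem and then rewrite the answer using the Duistermaat--Heckman relation among $[\omega_0]$, $[\omega_1]$ and $e$. By (\ref{equation : equivariant Hodge-Riemann bilnear form}) and the localization theorem \ref{localization},
$$ HR_2(f^*(\widetilde{\beta}), f^*(\widetilde{\beta})) = \int_M \widetilde{\beta}^2 [\widetilde{\omega}_H] = \int_{Z_{\min}} \frac{(\widetilde{\beta}|_{Z_{\min}})^2 ([\widetilde{\omega}_H]|_{Z_{\min}})}{e_{Z_{\min}}} + \int_{Z_{\max}} \frac{(\widetilde{\beta}|_{Z_{\max}})^2 ([\widetilde{\omega}_H]|_{Z_{\max}})}{e_{Z_{\max}}}. $$
The first term vanishes because $\widetilde{\beta}|_{Z_{\min}} = 0$ by Theorem \ref{theorem : canonical basis}(1) (as $H(Z_{\min}) = 0 < 1 = H(Z_{\max})$). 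In the second term, $Z_{\max}$ being the maximum, its negative normal bundle is the full normal bundle, so Theorem \ref{theorem : canonical basis}(2) gives $\widetilde{\beta}|_{Z_{\max}} = 1 \cup e^{-}_{Z_{\max}} = e_{Z_{\max}}$ and the integrand collapses to $e_{Z_{\max}} \cdot ([\widetilde{\omega}_H]|_{Z_{\max}})$. Substituting $e_{Z_{\max}} = -1 \otimes u - e \otimes 1$ from Lemma \ref{lemma : Euler class} and $[\widetilde{\omega}_H]|_{Z_{\max}} = [\omega_1] \otimes 1 - 1 \otimes u$ from Proposition \ref{proposition : restriction of symplectic class}, I would expand the product; the only summand with $H^*(BS^1)$-degree zero whose $H^*(Z_{\max})$-part has degree four is $-(e \cup [\omega_1]) \otimes 1$, so integrating over the four-manifold $Z_{\max}$ leaves
$$ HR_2(f^*(\widetilde{\beta}), f^*(\widetilde{\beta})) = -\langle e \cup [\omega_1], [Z_{\max}] \rangle. $$

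Next I would invoke the Duistermaat--Heckman theorem for the circle action. Under the gradient-flow identification of $Z_{\min}$ with $Z_{\max}$ used throughout this section --- which is simultaneously the identification of each of them with the reduced space $H^{-1}(t)/S^1$ at a regular level $t \in (0,1)$ --- the cohomology class of the reduced symplectic form is affine in $t$, equals $[\omega_0]$ in the limit $t \to 0^+$ and $[\omega_1]$ in the limit $t \to 1^-$, and has slope equal to minus the Euler class of the circle bundle $H^{-1}(t) \to H^{-1}(t)/S^1$; near the minimum this Euler class is $c_1(\nu_{Z_{\min}}) = e$, the class of Lemma \ref{lemma : Euler class}. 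Since the normalization $H : M \to [0,1]$ forces every fibre two-sphere joining $Z_{\min}$ to $Z_{\max}$ to have $\omega$-area $1$, this gives $[\omega_0] - [\omega_1] = e$ in $H^2(Z_{\min};\R)$. (Alternatively, one can stay inside the present framework: expand the equivariant symplectic class $[\widetilde{\omega}_H] = \sum_i c_i \widetilde{\alpha}_i + d\widetilde{\beta} + k u$ in the canonical basis of Theorem \ref{theorem : canonical basis} and restrict to $Z_{\min}$ and $Z_{\max}$ using Proposition \ref{proposition : restriction of symplectic class}, Lemma \ref{lemma : restriction of widetilde alpha} and Lemma \ref{lemma : Euler class}; this forces $k = 0$, $d = 1$, $[\omega_0] = \sum_i c_i \alpha_i$ and $\sum_i c_i \beta_i = [\omega_1] + e$, and the identification matches the two sides because the fibre area is $1$.)

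Finally, substituting $e = [\omega_0] - [\omega_1]$ into the formula of the first paragraph,
$$ HR_2(f^*(\widetilde{\beta}), f^*(\widetilde{\beta})) = -\langle ([\omega_0] - [\omega_1]) \cup [\omega_1], [Z_{\max}] \rangle = \langle [\omega_1]^2, [Z_{\max}] \rangle - \langle [\omega_0] \cup [\omega_1], [Z_{\max}] \rangle, $$
so $HR_2(f^*(\widetilde{\beta}), f^*(\widetilde{\beta})) = \int_{Z_{\max}} [\omega_1]^2$ if and only if $\langle [\omega_0] \cup [\omega_1], [Z_{\min}] \rangle = 0$, that is, $[\omega_0] \cdot [\omega_1] = 0$, which is the assertion. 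The step I expect to be the main obstacle is the sign and normalization bookkeeping: one must check that the localization computation retains exactly the $u^0$-term with a degree-four coefficient, and --- more delicately --- that the Duistermaat--Heckman relation comes out as $[\omega_0] - [\omega_1] = e$ on the nose, which relies on the normalization $H : M \to [0,1]$ and on the weights of the semifree action being $+1$ along $Z_{\min}$ and $-1$ along $Z_{\max}$; once these are settled the remaining computation is a single substitution.
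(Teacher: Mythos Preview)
Your proposal is correct and follows essentially the same route as the paper: localize $\int_M \widetilde{\beta}^2[\widetilde{\omega}_H]$, use $\widetilde{\beta}|_{Z_{\min}}=0$ and $\widetilde{\beta}|_{Z_{\max}}=e_{Z_{\max}}$ to reduce to $\int_{Z_{\max}}(-e)[\omega_1]$, and then invoke the Duistermaat--Heckman relation $[\omega_1]=[\omega_0]-e$ to rewrite this as $\int_{Z_{\max}}[\omega_1]^2-\int_{Z_{\max}}[\omega_0][\omega_1]$. The only difference is cosmetic: you spell out the Duistermaat--Heckman normalization more carefully and offer an alternative derivation via the canonical-basis expansion of $[\widetilde{\omega}_H]$, whereas the paper simply cites \cite[Theorem VI.2.3]{Au}.
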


\begin{proof}

    By the localization theorem \ref{localization} again, we have
    \begin{displaymath}
        \begin{array}{lll}
            HR_2(f^*(\widetilde{\beta}), f^*(\widetilde{\beta})) & = & \displaystyle \int_M \widetilde{\beta}^2 [\widetilde{\omega}_H]\\[1em]
             & = & \displaystyle \int_{Z_{\min}} \frac{(\widetilde{\beta}|_{Z_{\min}})^2 \cdot [\widetilde{\omega}_H]|_{Z_{\min}}}{e_{Z_{\min}}} + \int_{Z_{\max}} \frac{(\widetilde{\beta}|_{Z_{\max}})^2 \cdot [\widetilde{\omega}_H]|_{Z_{\max}}}{e_{Z_{\max}}}.\\
        \end{array}
    \end{displaymath}
    Since $\widetilde{\beta}|_{Z_{\min}} = 0$ and $\widetilde{\beta}|_{Z_{\max}} = e_{Z_{\max}}$, we have
    \begin{displaymath}
        \begin{array}{lll}
            \displaystyle \int_M \widetilde{\beta}^2 [\widetilde{\omega}_H] & = & \displaystyle \int_{Z_{\max}} e_{Z_{\max}} \cdot ([\omega_1] \otimes 1 - 1 \otimes u) \\[1em]
            & = & \displaystyle \int_{Z_{\max}} (-e \otimes 1 - 1 \otimes u) \cdot ([\omega_1] \otimes 1 - 1 \otimes u) = \int_{Z_{\max}} -e \cdot [\omega_1].
        \end{array}
    \end{displaymath}
    Note that $[\omega_1] = [\omega_0] - e$ by the Duistermaat-Heckman theorem \cite[Theorem VI.2.3]{Au}. Hence we have $$[\omega_0] \cdot [\omega_1] = [\omega_1]^2 + e \cdot [\omega_1].$$
    Hence $[\omega_1]^2 = -e \cdot [\omega_1]$ if and only if $[\omega_0] \cdot [\omega_1] = 0.$
    Therefore $[\omega_0] \cdot [\omega_1] = 0$ if and only if
    $$\int_M \widetilde{\beta}^2 [\widetilde{\omega}_H] = \int_{Z_{\max}} -e \cdot [\omega_1] = \int_{Z_{\max}} [\omega_1]^2.$$

\end{proof}

Now, we are ready to prove Proposition \ref{proposition : if and only if}.
\begin{proof}[Proof of Proposition \ref{proposition : if and only if}]
    Firstly, we prove that the second Hodge-Riemann form $HR_2 : H^2(M) \times H^2(M) \rightarrow \R$ is non-singular if and only if $[\omega_0] \cdot ~[\omega_1] \neq 0$. By Lemma \ref{lemma : restriction of widetilde alpha}, we can define a map $\phi$ such that
    \begin{displaymath}
        \begin{array}{lccccl}
             \phi & : & H^2(Z_{\min}) & \rightarrow & H^2(Z_{\max}) & \\
                  &   &  \alpha & \mapsto & \phi(\alpha), &  \widetilde{\alpha}|_{Z_{\max}} = \phi(\alpha) \otimes 1. \\
        \end{array}
    \end{displaymath}
    It is obvious that $\phi$ is $\R$-linear by Theorem \ref{theorem : canonical basis}, i.e., $\widetilde{\gamma_1} + \widetilde{\gamma_2}$ is the canonical class with respect to $\gamma_1 + \gamma_2$ for every $\gamma_1, \gamma_2 \in H^2(Z_{\min}).$ Furthermore, the map $\phi$ preserves the intersection product by Lemma \ref{lemma : same intersection}. Hence $\phi$ is an $\R$-isomorphism.

    Now, let $\{\alpha_1, \cdots, \alpha_{b_2} \}$ be an orthogonal basis of $H^2(Z_{\min})$ with respect to the intersection product on $H^*(Z_{\min})$ such that $\phi(\alpha_{b_2}) = [\omega_1] \in H^2(Z_{\max})$. Such $\alpha_{b_2}$ exists since $\phi$ is an isomorphism.
    Then the set $\{\beta_1, \cdots, \beta_{b_2} = [\omega_1] \}$ is also an orthogonal basis of $H^2(Z_{\max})$ with respect to the intersection product on $H^*(Z_{\max})$, where $\beta_i$ defined in Lemma \ref{lemma : restriction of widetilde alpha} is nothing but $\phi(\alpha_i)$ for every $i = 1, \cdots, b_2$.
    Let $\widetilde{\alpha_i} \in H^2_{S^1}(M)$ be the canonical class with respect to $\alpha_i$ for each $i$ and let $\widetilde{\beta} \in H^2_{S^1}(M)$ be the canonical class with respect to $1 \in H^0(Z_{\max})$ respectively. Then $\frak{B}_2 = \{f^*(\widetilde{\alpha}_1), \cdots, f^*(\widetilde{\alpha}_{b_2}), f^*(\widetilde{\beta}) \}$ is a basis of $H^2(M)$ by Theorem \ref{theorem : canonical basis}. Using Lemma \ref{lemma : alpha i and alpha j} and Lemma \ref{lemma : alpha and beta}, the matrix associated to $HR_2$ with respect to the basis $\frak{B} = \{f^*(\widetilde{\alpha}_1), \cdots, f^*(\widetilde{\alpha}_{b_2}), f^*(\widetilde{\beta}) \}$ is of the following form

    \begin{equation}
    HR_2(\frak{B}) = \left(\begin{array}{cccc|c}
        \int_{Z_{\max}} \beta_1^2 & 0 & \cdots & 0 & 0\\

        0 & \int_{Z_{\max}} \beta_2^2 & 0 & \cdots & 0\\

        \vdots & \vdots & \vdots & \vdots & \vdots\\

        0 & \cdots & 0 & \int_{Z_{\max}} \beta_{b_2}^2 & \int_{Z_{\max}} \beta_{b_2}^2 \\
        \hline
        0 & \cdots & 0 & \int_{Z_{\max}} \beta_{b_2}^2 & * \\

    \end{array}\right)
    \end{equation}
    Also, Lemma \ref{lemma : beta beta} implies that $[\omega_0] \cdot [\omega_1] = 0$ if and only if
    \begin{equation}
    HR_2(\frak{B}) = \left(\begin{array}{cccc|c}
        \int_{Z_{\max}} \beta_1^2 & 0 & \cdots & 0 & 0\\

        0 & \int_{Z_{\max}} \beta_2^2 & 0 & \cdots & 0\\

        \vdots & \vdots & \vdots & \vdots & \vdots\\

        0 & \cdots & 0 & \int_{Z_{\max}} \beta_{b_2}^2 & \int_{Z_{\max}} \beta_{b_2}^2 \\
        \hline
        0 & \cdots & 0 & \int_{Z_{\max}} \beta_{b_2}^2 & \int_{Z_{\max}} \beta_{b_2}^2 \\

    \end{array}\right)
    \end{equation}
    Hence $[\omega_0] \cdot [\omega_1] \neq 0$ if and only if the associated matrix $HR_2(\frak{B})$ is non-singular.

    Secondly, we prove that the first Hodge-Riemann form $HR_1 : H^1(M) \times H^1(M) \rightarrow \R$ is non-singular if and only if $(Z_{\max}, \omega_0 + \omega_1)$ satisfies the hard Lefschetz property.
    Note that for each element $\alpha \in H^1(Z_{\min})$, the restriction of the corresponding canonical class $\widetilde{\alpha}|_{Z_{\max}}$
    has $H^*(BS^1)$-degree less than one, i.e., $\widetilde{\alpha}|_{Z_{\max}} = \alpha' \otimes 1 \in H^1_{S^1}(Z_{\max})$ for some $\alpha' \in H^1(Z_{\max})$. Hence we can define a map
    \begin{displaymath}
        \begin{array}{lccccl}
             \psi & : & H^1(Z_{\min}) & \rightarrow & H^1(Z_{\max}) & \\
                  &   &  \alpha & \mapsto & \psi(\alpha), &  \widetilde{\alpha}|_{Z_{\max}} = \psi(\alpha) \otimes 1. \\
        \end{array}
    \end{displaymath}

    \begin{lemma}\label{lemma : psi H1 is isomorphism}
        The map $\psi : H^1(Z_{\min}) \rightarrow H^1(Z_{\max})$ is an isomorphism.
    \end{lemma}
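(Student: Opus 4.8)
The plan is to compare, via the localization theorem~\ref{localization}, the Poincar\'{e} duality pairing $H^1(M)\times H^5(M)\to\R$ on $M$ with the cup product pairing $H^1(Z_{\max})\times H^3(Z_{\max})\to\R$ on $Z_{\max}$, and then to read off that $\psi$ is an isomorphism from the non-degeneracy of both pairings. First I would pin down which canonical classes of Theorem~\ref{theorem : canonical basis} live in the degrees we care about. Since $\mathrm{ind}(Z_{\min})=0$ and $\mathrm{ind}(Z_{\max})=2$, the only canonical classes of total degree $1$ are the $\widetilde{\alpha}$ with $\alpha\in H^1(Z_{\min})$ (a class in $H^{-1}(Z_{\max})$ contributes nothing), and the only canonical classes of total degree $5$ are the $\widetilde{\gamma}$ with $\gamma\in H^3(Z_{\max})$ (a class in $H^5(Z_{\min})$ is zero, as $\dim Z_{\min}=4$). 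Thus, fixing bases $\{\alpha_1,\dots,\alpha_b\}$ of $H^1(Z_{\min})$ and $\{\gamma_1,\dots,\gamma_{b'}\}$ of $H^3(Z_{\max})$, Theorem~\ref{theorem : canonical basis} tells us $\{f^*(\widetilde{\alpha}_i)\}_i$ is a basis of $H^1(M)$ and $\{f^*(\widetilde{\gamma}_j)\}_j$ is a basis of $H^5(M)$.

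Next I would evaluate $\int_M\widetilde{\alpha}_i\,\widetilde{\gamma}_j$ in two ways. Since $\int_M$ only detects the ordinary cohomology part of an equivariant class and $f^*$ is a ring homomorphism, $\int_M\widetilde{\alpha}_i\widetilde{\gamma}_j=\langle f^*(\widetilde{\alpha}_i)\cup f^*(\widetilde{\gamma}_j),[M]\rangle$. On the other hand, the localization theorem~\ref{localization} applies: by Theorem~\ref{theorem : canonical basis}, $\widetilde{\gamma}_j|_{Z_{\min}}=0$, while $\widetilde{\gamma}_j|_{Z_{\max}}=\gamma_j\cup e^-_{Z_{\max}}$; as $Z_{\max}$ is the maximum, $e^-_{Z_{\max}}=e_{Z_{\max}}=-1\otimes u-e\otimes 1$ by Lemma~\ref{lemma : Euler class}, and since $\gamma_j\cup e\in H^5(Z_{\max})=0$ this gives $\widetilde{\gamma}_j|_{Z_{\max}}=-\gamma_j\otimes u$. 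Also $\widetilde{\alpha}_i|_{Z_{\min}}=\alpha_i\otimes 1$ and, by definition of $\psi$, $\widetilde{\alpha}_i|_{Z_{\max}}=\psi(\alpha_i)\otimes 1$. Hence the $Z_{\min}$-contribution to the localization formula vanishes and
$$\int_M\widetilde{\alpha}_i\widetilde{\gamma}_j=\int_{Z_{\max}}\frac{(\psi(\alpha_i)\otimes 1)\,(-\gamma_j\otimes u)}{-1\otimes u-e\otimes 1}=\int_{Z_{\max}}\psi(\alpha_i)\cup\gamma_j=\langle\psi(\alpha_i)\cup\gamma_j,[Z_{\max}]\rangle,$$
where in the middle step only the $u$-free term of the expansion $(-1\otimes u-e\otimes 1)^{-1}=-u^{-1}+e\,u^{-2}-\cdots$ survives because $\psi(\alpha_i)\cup\gamma_j$ is already of top degree on $Z_{\max}$.

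Finally I would conclude. The matrix $(\langle f^*(\widetilde{\alpha}_i)\cup f^*(\widetilde{\gamma}_j),[M]\rangle)_{i,j}$ is the Gram matrix of the Poincar\'{e} duality pairing $H^1(M)\times H^5(M)\to\R$ in the bases above, hence non-singular; being non-singular it is square, so $b=\dim H^1(M)=\dim H^5(M)=b'$. Therefore $(\langle\psi(\alpha_i)\cup\gamma_j,[Z_{\max}]\rangle)_{i,j}$ is a non-singular $b\times b$ matrix, which forces $\psi$ to be injective: if $\psi(\sum_i c_i\alpha_i)=0$ then $\sum_i c_i\langle\psi(\alpha_i)\cup\gamma_j,[Z_{\max}]\rangle=0$ for every $j$, so all $c_i=0$. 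Since $\psi$ is $\R$-linear (the canonical class depends linearly on its defining class, by the uniqueness part of Theorem~\ref{theorem : canonical basis}) and $\dim H^1(Z_{\min})=b=b'=b_3(Z_{\max})=b_1(Z_{\max})=\dim H^1(Z_{\max})$ by Poincar\'{e} duality on the closed oriented $4$-manifold $Z_{\max}$, an injective linear map between spaces of the same finite dimension must be an isomorphism. I expect the main obstacle to be the localization computation itself: matching degrees so that degree-$5$ canonical classes come exactly from $H^3(Z_{\max})$, and tracking the sign together with the vanishing $\gamma_j\cup e=0$ inside $e^-_{Z_{\max}}$; the subsequent Betti-number bookkeeping needed to promote injectivity to bijectivity is routine but must be spelled out.
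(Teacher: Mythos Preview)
Your argument is correct. Both your proof and the paper's rely on the localization theorem and Poincar\'{e} duality to establish injectivity of $\psi$, then finish by a dimension count; the difference lies in \emph{which} auxiliary canonical classes you pair $\widetilde{\alpha}$ against. The paper pairs $\widetilde{\alpha}\in H^1_{S^1}(M)$ with the canonical class $\widetilde{\beta}\in H^3_{S^1}(M)$ coming from some $\beta\in H^3(Z_{\min})$ with $\alpha\cup\beta\neq 0$ (which exists by Poincar\'{e} duality on $Z_{\min}$); since $\widetilde{\alpha}\widetilde{\beta}$ has degree $4<6$, one has $\int_M\widetilde{\alpha}\widetilde{\beta}=0$, and the nonvanishing $Z_{\min}$-contribution $\tfrac{1}{u}\int_{Z_{\min}}\alpha\beta$ forces the $Z_{\max}$-contribution, and hence $\psi(\alpha)$, to be nonzero. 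You instead pair with degree-$5$ canonical classes $\widetilde{\gamma}_j$ coming from $H^3(Z_{\max})$, so that the $Z_{\min}$-contribution vanishes trivially (because $\widetilde{\gamma}_j|_{Z_{\min}}=0$) and the identity $\langle f^*(\widetilde{\alpha}_i)\cup f^*(\widetilde{\gamma}_j),[M]\rangle=\langle\psi(\alpha_i)\cup\gamma_j,[Z_{\max}]\rangle$ transfers the nondegeneracy of Poincar\'{e} duality on $M$ directly to the pairing on $Z_{\max}$. Your route has the mild advantage that the $Z_{\min}$-term disappears for free and the matrix identity yields injectivity in one stroke, whereas the paper's route works class by class; conversely, the paper's version avoids setting up the full basis of $H^5(M)$ and the attendant Betti-number bookkeeping.
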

    \begin{proof}
        Let $\alpha \in H^1(Z_{\min})$ be any non-zero class and let $\beta \in H^3(Z_{\min})$ such that $\alpha \cdot \beta \in H^4(Z_{\min})$ is non-zero. Such $\beta$ always exists because the intersection pairing $H^1(Z_{\min}) \times H^3(Z_{\min}) \rightarrow \R$ is non-singular. Let $\widetilde{\beta} \in H^3_{S^1}(M)$ be the canonical class with respect to $\beta$. Applying the localization theorem \ref{localization} to $\widetilde{\alpha}\widetilde{\beta} \in H^4_{S^1}(M)$, we have
        \begin{displaymath}
            \begin{array}{lll}
           0 & = & \displaystyle \int_M \widetilde{\alpha}\widetilde{\beta} \\[1em]
             & = & \displaystyle \int_{Z_{\min}} \frac{\widetilde{\alpha}|_{Z_{\min}} \widetilde{\beta}|_{Z_{\min}}}{e_{Z_{\min}}} - \int_{Z_{\max}} \frac{\widetilde{\alpha}|_{Z_{\max}} \widetilde{\beta}|_{Z_{\max}}}{e_{Z_{\max}}} \\[1em]
             & = & \displaystyle \int_{Z_{\min}} \frac{\alpha\beta}{e \otimes 1 + 1 \otimes u} - \int_{Z_{\max}} \frac{(\psi(\alpha) \otimes 1) \cdot \widetilde{\beta}|_{Z_{\max}}}{e_{Z_{\max}}} \\[1em]
             & = & \displaystyle \frac{1}{u} \int_{Z_{\min}} \alpha\beta - \int_{Z_{\max}} \frac{(\psi(\alpha) \otimes 1) \cdot \widetilde{\beta}|_{Z_{\max}}}{e_{Z_{\max}}} \\[1em]
            \end{array}
        \end{displaymath}
        Hence $\psi(\alpha) = \widetilde{\alpha}|_{Z_{\max}}$ can never be zero.
    \end{proof}

    For any $\alpha, \beta \in H^1(Z_{\min})$, let $\widetilde{\alpha}, \widetilde{\beta} \in H^1_{S^1}(M)$ be the canonical classes of $\alpha$ and $\beta$ respectively. Then
    \begin{displaymath}
        \begin{array}{lll}
            HR_1(f^*(\widetilde{\alpha}), f^*(\widetilde{\beta})) & = & \int_M \widetilde{\alpha}\widetilde{\beta}[\widetilde{\omega_H}]^2\\[1em]
            & = & \displaystyle \int_{Z_{\min}} \frac{\alpha \beta [\omega_0]^2 \otimes 1}{e_{Z_{\min}}} + \int_{Z_{\max}} \frac{\widetilde{\alpha}|_{Z_{\max}} \cdot \widetilde{\beta}|_{Z_{\max}} \cdot ([\omega_1] \otimes 1 - 1 \otimes u)^2}{e_{Z_{\max}}}.
        \end{array}
    \end{displaymath}
    Since $\alpha \beta [\omega_0]^2 \in H^6(Z_{\min})$, the first summand must be zero. On the other hand,

    \begin{displaymath}
        \begin{array}{lll}
            \displaystyle \int_{Z_{\max}} \frac{\widetilde{\alpha}|_{Z_{\max}} \cdot \widetilde{\beta}|_{Z_{\max}} \cdot ([\omega_1] \otimes 1 - 1 \otimes u)^2}{e_{Z_{\max}}} & = & \displaystyle \int_{Z_{\max}} \frac{\widetilde{\alpha}|_{Z_{\max}} \cdot \widetilde{\beta}|_{Z_{\max}} \cdot ([\omega_1] \otimes 1 - 1 \otimes u)^2}{- e \otimes 1 - 1 \otimes u}\\[1em]
            & = & \displaystyle \int_{Z_{\max}} \frac{\widetilde{\alpha}|_{Z_{\max}} \cdot \widetilde{\beta}|_{Z_{\max}} \cdot ((2[\omega_1] + e) \otimes u^3)}{u^3} \\[1em]
            & = & \displaystyle \int_{Z_{\max}} \frac{\widetilde{\alpha}|_{Z_{\max}} \cdot \widetilde{\beta}|_{Z_{\max}} \cdot (([\omega_1] + ([\omega_1]+ e)) \otimes u^3)}{u^3} \\[1em]
            & = & \displaystyle \int_{Z_{\max}} \psi(\alpha)\psi(\beta)([\omega_1] + [\omega_0]). \\
        \end{array}
    \end{displaymath}
    The last equality comes from the Duistermaat-Heckman theorem which states that $[\omega_1] = [\omega_0] - e$.
    Therefore, we have
    \begin{equation}\label{equation : HLP for H1}
    HR_1(f^*(\widetilde{\alpha}), f^*(\widetilde{\beta})) = \langle \psi(\alpha)\psi(\beta)([\omega_1] + [\omega_0]), [Z_{\max}] \rangle
    \end{equation}
    Note that the right hand side of the equation (\ref{equation : HLP for H1}) is the first Hodge-Riemann form for $\omega_1 + \omega_0$.
    Since $\psi$ is an isomorphism by Lemma \ref{lemma : psi H1 is isomorphism}, we can conclude that $HR_1$ is non-singular if and only if $(Z_{\max}, \omega_1 + \omega_0)$ satisfies the hard Lefschetz property.

\end{proof}

\begin{remark}
    It is not clear that $\omega_0 + \omega_1$ is symplectic, but the hard Lefschetz property is a cohomological condition.
    In fact, the cohomology class of the reduced symplectic form $\omega_{\frac{1}{2}}$ on $H^{-1}(\frac{1}{2}) / S^1$ is $\frac{1}{2}([\omega_0] + [\omega_1])$.
    Therefore, we can conclude that $HR_1$ is non-singular if and only if the reduced symplectic form $\omega_{\frac{1}{2}}$ satisfies the hard Lefschetz property.
\end{remark}

\section{Proof of Theorem \ref{theorem : main} and Examples}

 Recall that for any compact symplectic four manifold $(N,\sigma)$ with an integral cohomology class $e \in H^2(N;\Z)$, we constructed a six-dimensional compact simple Hamiltonian $S^1$-manifold $(\widetilde{M}(N,e,2\epsilon)_{-\epsilon, \epsilon}, \widetilde{\omega}_{\sigma})$ with a moment map
 $$H : \widetilde{M}(N,e,2\epsilon)_{-\epsilon, \epsilon} \rightarrow [-\epsilon, \epsilon] $$ in Section 2.

 \begin{proposition}\cite[Proposition 5.8~(i), p.156]{McS}\label{proposition : Section 5, construct simple with family of symplectic form}
    Let $I \subset \R$ be an interval and let $(N,\sigma)$ be a four dimensional compact symplectic manifold and $e \in H^2(N;\Z)$ be an integral cohomology class of $N$. Let $\{ \sigma_t \}_I$ be a one-parameterized family of symplectic forms on $N$ such that $[\sigma_t] - [\sigma_s] = (s-t)e$ for all $s,t \in I$. Let $P$ be a pricipal bundle whose first Chern class is $e$. Then there exists an $S^1$-invariant symplectic form $\omega$ on the manifold $P \times [-1,1]$ with a moment map equal to the projection $P \times [-1,1] \rightarrow [-1,1]$ and each reduced symplectic form is $\sigma_t$ for all $t \in [-1,1]$.
 \end{proposition}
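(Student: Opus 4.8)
The plan is to build $\omega$ by the standard minimal coupling (``inverse Duistermaat--Heckman'') construction, absorbing the prescribed variation of the classes $[\sigma_t]$ into a connection term. After reparametrizing we may assume $[-1,1]\subseteq I$ (more generally the same argument produces such an $\omega$ on $P\times J$ for any compact subinterval $J\subseteq I$). First I would fix a closed $2$-form $\rho$ on $N$ with $[\rho]=e$ and, by Chern--Weil theory (rescaling the generator of $S^1$ if necessary to absorb a factor of $2\pi$), a connection $1$-form $\theta$ on $P$ whose curvature satisfies $d\theta=-\pi^*\rho$; here $\pi:P\times[-1,1]\to N$ denotes the projection and $t$ the coordinate on $[-1,1]$.

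The hypothesis $[\sigma_t]-[\sigma_s]=(s-t)e$ gives $[\sigma_t+t\rho]=[\sigma_0]$ for every $t$, so the family of closed $2$-forms $t\mapsto\sigma_t+t\rho-\sigma_0$ is exact with trivial cohomology class. Fixing a Riemannian metric on $N$ with Green operator $G$ and codifferential $d^*$, the forms $\lambda_t:=d^*G(\sigma_t+t\rho-\sigma_0)$ depend smoothly on $t$ and satisfy $d\lambda_t=\sigma_t+t\rho-\sigma_0$, i.e.\ $\sigma_t=\sigma_0-t\rho+d_N\lambda_t$. I then set, on $P\times[-1,1]$,
\[
\omega:=\pi^*\sigma_0+d\bigl(t\,\theta+\pi^*\lambda_t\bigr),
\]
which is closed automatically. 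Expanding $d(t\theta)=dt\wedge\theta+t\,d\theta=dt\wedge\theta-t\,\pi^*\rho$ and $d(\pi^*\lambda_t)=\pi^*(d_N\lambda_t)+dt\wedge\pi^*(\partial_t\lambda_t)$ and using the displayed formula for $\sigma_t$, one obtains
\[
\omega=\pi^*\sigma_t+dt\wedge\theta',\qquad \theta':=\theta+\pi^*(\partial_t\lambda_t),
\]
where $\theta'$ restricts to a connection $1$-form on each slice $P\times\{t\}$.

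It then remains to run the routine checks. Writing $X$ for the vector field generating the $S^1$-action (so $\theta'(X)=1$, $\theta'(\partial_t)=0$ and $\iota_X\pi^*\sigma_t=0$), the form $\omega$ is $S^1$-invariant summand by summand; and via the connection splitting $T(P\times\{t\})=\ker\theta'\oplus\R X$ one checks that $\omega$ is nondegenerate, since the restriction of $\pi^*\sigma_t$ to the horizontal distribution is nondegenerate (as $\sigma_t$ is symplectic), $\omega(X,\partial_t)=-\theta'(X)=-1\neq0$, and the remaining cross terms vanish. Thus $\omega$ is symplectic on the $6$-manifold $P\times[-1,1]$; note that, in contrast with the construction of $\omega_\sigma$ on $M(N,e,\epsilon)$ in Section~2, no smallness of the interval is needed here, precisely because the variation of $\sigma_t$ cancels the curvature term $t\,d\theta$. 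Finally $\iota_X\omega=-dt$, so (with the appropriate sign convention, or after reversing the action) the projection $H:P\times[-1,1]\to[-1,1]$ is a moment map, and since $H^{-1}(c)=P\times\{c\}$ with $\iota_c^*\omega=\pi^*\sigma_c$ descending to $\sigma_c$ on $N=P/S^1$, the reduced form at level $c$ is exactly $\sigma_c$.

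The only step that is not purely formal is the smooth choice of primitives $\lambda_t$, which is exactly what the Hodge-theoretic formula above (equivalently, any fixed linear right inverse to $d$ on exact forms) supplies, so I do not expect a genuine obstacle. The hypothesis $\dim N=4$ plays no role in the construction; it is stated only because that is the case used in Section~5.
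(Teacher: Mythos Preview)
Your argument is correct and is essentially the standard ``inverse Duistermaat--Heckman'' construction; the computations and verifications you outline go through as written. Note, however, that the paper does not supply its own proof of this proposition: it is quoted verbatim from \cite[Proposition~5.8~(i)]{McS} and used as a black box in Section~5, so there is no in-paper argument to compare against. What you have written is precisely the argument one finds in McDuff--Salamon.
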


 Proposition \ref{proposition : Section 5, construct simple with family of symplectic form} implies that if there is a family of symplectic forms
 $\{ \sigma_t \}_{\-1 \leq t \leq 1}$ on $N$ with $[\sigma_t] - [\sigma_s] = (s-t)e$ for all $s,t \in [-1, 1]$, then we may choose $\epsilon = 1$ such that our manifold $(\widetilde{M}(N,e,1+\delta)_{-1,1}, \widetilde{\omega}_{\sigma})$ is symplectic for sufficiently small $\delta > 0$. Therefore, if we find
 \begin{itemize}
    \item a smooth compact four manifold $N$,
    \item an integral class $e \in H^2(N;\Z)$, and
    \item a family of symplectic forms $\{ \sigma_t \}_{\-1 \leq t \leq 1}$ on $N$ satisfying $[\sigma_t] - [\sigma_s] = (s-t)e$ for all $s,t \in [-1, 1]$ such that $[\sigma_{-1}]\cdot [\sigma_1] = 0$,
 \end{itemize}
 then the corresponding manifold $(\widetilde{M}(N,e,1+\delta)_{-1,1}, \widetilde{\omega}_{\sigma})$ would not satisfy the hard Lefschetz property.
 Before proving Theorem \ref{theorem : main}, we give simple examples as follows.

 \begin{example}[violating the first condition of Proposition \ref{proposition : if and only if}]
    Let $(N,\sigma)$ be any smooth compact symplectic four manifold which does not satisfy the hard Lefschetz property. Let $e \in H^2(N)$ be any integral cohomology class. Since the non-singularity of symplectic structure is an open condition, we can find a sufficiently large integer $k$ such that $\{ \sigma + t\cdot \gamma \}_{\{-\frac{1}{k} \leq t \leq \frac{1}{k} \}}$ is a family of symplectic forms on $N$, where $\gamma$ is any fixed closed two form which represents the class $e$. Then $\{ \sigma_t := k\cdot \sigma + t \cdot \gamma \}_{-1 \leq t \leq 1 }$ is a family of symplectic forms with $[\sigma_{-1}] + [\sigma_1] = 2k[\sigma]$. Hence the corresponding manifold $(\widetilde{M}(N,e,1+\delta)_{-1,1}, \widetilde{\omega}_{k\sigma})$ does not satisfies the hard Lefschetz property by Proposition \ref{proposition : if and only if}.
 \end{example}

 Suppose that $N$ is a smooth compact four manifold with two symplectic forms $\sigma_{-1}$ and $\sigma_1$ with $\sigma_{-1} \wedge \sigma_1 \equiv 0$ on $N$. Assume that $\sigma_{-1}$ and $\sigma_1$ give a same orientation on $N$. Then we can easily show that $\sigma_t := \frac{1-t}{2}\sigma_{-1} +
 \frac{1+t}{2}\sigma_1$ is a symplectic form on $N$ for every $t \in [-1,1].$ Hence if $e = [\sigma_{-1}] - [\sigma_1] \in H^2(N;Z)$ is an integral class and
 $\sigma_{-1} \wedge \sigma_1 \equiv 0$ on $N$, then the corresponding manifold $(\widetilde{M}(N,e,1+\delta)_{-1,1}, \widetilde{\omega}_{\sigma})$ with $\sigma = \sigma_0$ violates second condition of Proposition \ref{proposition : if and only if}.

\begin{example}[violating the second condition of Proposition \ref{proposition : if and only if}]
    Let $N = T^4 \cong S^1 \times S^1 \times S^1 \times S^1$ with a component-wise coordinate system $$(x_1,x_2,x_3,x_4) \in S^1 \times S^1 \times S^1 \times S^1$$ such that $\int_{S^1} dx_i = 1$ for all $i$. Let
    \begin{displaymath}
    \begin{array}{lll}
        \sigma_{-1} & = & 2(dx_1 \wedge dx_2 + dx_3 \wedge dx_4), ~\mathrm{and}\\
        \sigma_1 & = & 2(dx_1 \wedge dx_4 + dx_2 \wedge dx_3). \\
    \end{array}
    \end{displaymath}
Then we have $$\sigma_{-1}^2 = \sigma_1^2 = 8 (dx_1 \wedge dx_2 \wedge dx_3 \wedge dx_4)$$ so that $\sigma_{-1}$ and $\sigma_1$ give the same orientation on $N$. Also, $[\sigma_{-1}]$ and $[\sigma_1]$ are integral classes in $H^2(T^4;\Z) \subset H^2(T^4;\R)$ and $\sigma_{-1} \wedge \sigma_1 \equiv 0$ on $T^4$. Hence the corresponding simple Hamiltonian $S^1$-manifold $(\widetilde{M}(N,e,1+\delta)_{-1,1}, \widetilde{\omega}_{\sigma_0})$ with $e = dx_1 \wedge dx_2 + dx_3 \wedge dx_4 - dx_1 \wedge dx_4 - dx_2 \wedge dx_3$ does not satisfy the hard Lefschetz property by Proposition \ref{proposition : if and only if}.
\end{example}

Before giving a proof of the main theorem \ref{theorem : main}, we need to recall holomorphic symplectic manifolds which will be used in the proof.
Let $(N,\sigma,J)$ be a compact K\"{a}hler manifold.
A \textit{holomorphic symplectic form} on $N$ is a closed and non-degenerate holomorphic two form $\rho \in \Omega^{2,0}(N)$.
If $N$ admits a holomorphic symplectic form $\rho$, then $\rho^n$ is nowhere vanishing so that it gives a nowhere zero section of the canonical line bundle $K_N = \wedge^n T^*N$, which means that $K_N$ is a trivial bundle.
Conversely, if $N$ is a compact K\"{a}hler manifold such that $K_N$ is trivial, then there exists a Ricci-flat metric on $N$ so that the structure group $U(n)$ can be reduced to $SU(n)$ (it was conjectured by E. Calabi and proved by S. T. Yau in 1978). See \cite{Tos} for the details.
In particular, if $N$ is a compact K\"{a}hler surface with a trivial $K_N$, then $N$ admits a \textit{hyperK\"{a}hler structure} since $SU(2) \cong Sp(1)$. Since any hyperK\"{a}hler manifold admits a a holomorphic symplectic form, so does $N$.
Now, we are ready to prove our main theorem.
\begin{theorem}[Theorem \ref{theorem : main}]
    There exists a compact K\"{a}hler manifold $(X,\omega,J)$ with $\dim_{\C} X = 3$ such that
    \begin{enumerate}
        \item $X$ is simply connected,
        \item $H^{2k+1}(X) = 0$ for every integer $k \geq 0$,
        \item $X$ admits a symplectic form $\sigma \in \Omega^2(X)$ of non-hard Lefschetz type, and
        \item $\sigma$ is deformation equivalent to the K\"{a}hler form $\omega$.
    \end{enumerate}
\end{theorem}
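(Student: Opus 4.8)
The plan is to take $N$ to be a K3 surface, let $X$ be the two‑sphere bundle $\mathbb{P}(\xi\oplus\underline{\C})\to N$ furnished by Proposition~\ref{proposition : sphere bundle}, equip $X$ with the K\"ahler form provided there, and obtain the non‑hard‑Lefschetz symplectic form by feeding into Proposition~\ref{proposition : Section 5, construct simple with family of symplectic form} a one‑parameter family $\{\sigma_t\}_{-1\le t\le 1}$ of symplectic forms on $N$ whose extreme members $\sigma_{-1},\sigma_1$ satisfy $[\sigma_{-1}]\cdot[\sigma_1]=0$, so that the resulting six‑dimensional simple Hamiltonian $S^1$‑manifold violates the second condition in Proposition~\ref{proposition : if and only if}. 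A hyperk\"ahler structure on the K3 surface is exactly what makes such a family available while simultaneously keeping $X$ simply connected with vanishing odd cohomology and keeping the new form in the same deformation class as the K\"ahler one.

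First I would fix the data. Let $N$ be a smooth quartic surface in $\mathbb{P}^3$, with complex structure $I$ and ample hyperplane class $e\in H^2(N;\Z)$; then $e$ is an integral class of type $(1,1)$ with $e^{2}>0$. By Yau's theorem $(N,I)$ admits a Ricci‑flat K\"ahler metric whose K\"ahler class is $\sqrt2\,e$, and this metric is part of a hyperk\"ahler structure with complex structures $I,J,K$ and K\"ahler forms $\omega_I,\omega_J,\omega_K$ satisfying $\omega_a\wedge\omega_b=2\delta_{ab}\,\mathrm{vol}$ pointwise, $[\omega_I]=\sqrt2\,e$, and with $a\,\omega_I+b\,\omega_J+c\,\omega_K$ again a hyperk\"ahler K\"ahler form whenever $a^{2}+b^{2}+c^{2}=1$. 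I set
$$\sigma_{-1}=\tfrac{1}{\sqrt2}(\omega_I+\omega_J),\qquad \sigma_{1}=\tfrac{1}{\sqrt2}(\omega_J-\omega_I),\qquad \sigma_t=\tfrac{1-t}{2}\,\sigma_{-1}+\tfrac{1+t}{2}\,\sigma_{1}=\tfrac{1}{\sqrt2}(\omega_J-t\,\omega_I).$$
For $t\in[-1,1]$ the form $\sigma_t$ is a positive multiple of the hyperk\"ahler K\"ahler form of the complex structure $(J-tI)/\sqrt{1+t^{2}}$, hence symplectic; one computes $[\sigma_t]-[\sigma_s]=(s-t)\,e$, and $\sigma_{-1}\wedge\sigma_1=\tfrac12(\omega_J^2-\omega_I^2)=0$ pointwise, so $[\sigma_{-1}]\cdot[\sigma_1]=0$ in $H^4(N;\R)$.

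The first three conclusions now follow. Since $e$ is of type $(1,1)$, Proposition~\ref{proposition : sphere bundle} applied to the K\"ahler surface $(N,\omega_I,I)$ yields a K\"ahler form $\sigma'$ on $N$, deformation equivalent to $\omega_I$ (Proposition~\ref{proposition : symplectization is Kaehler}), such that $\widetilde{\omega}_{\sigma'}$ is a K\"ahler form on $\widetilde{M}(N,e,2\epsilon)_{-\epsilon,\epsilon}\cong\mathbb{P}(\xi\oplus\underline{\C})=:X$; I take $\omega:=\widetilde{\omega}_{\sigma'}$ and the induced complex structure on $X$, so that $\dim_{\C}X=3$. Because $X\to N$ is a two‑sphere bundle over the simply connected K3 surface $N$, which has $H^{\mathrm{odd}}(N)=0$, the homotopy exact sequence of the fibration gives $\pi_1(X)=1$, and the Leray--Hirsch theorem (via $c_1(\mathcal{O}_{\mathbb{P}(\xi\oplus\underline{\C})}(1))$) gives $H^*(X)\cong H^*(N)\otimes H^*(\mathbb{P}^1)$, so that $H^{2k+1}(X)=0$ for all $k\ge0$: this is (1) and (2). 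For (3), Proposition~\ref{proposition : Section 5, construct simple with family of symplectic form} applied to $\{\sigma_t\}_{-1\le t\le1}$ (taking $\epsilon=1$ and $\delta>0$ small) produces a closed six‑dimensional simple Hamiltonian $S^1$‑manifold $(\widetilde{M}(N,e,1+\delta)_{-1,1},\sigma)$, with $\sigma:=\widetilde{\omega}_{\sigma_0}$, diffeomorphic to $X$ by Proposition~\ref{proposition : sphere bundle}, both of whose four‑dimensional fixed components are copies of $N$ carrying the forms $\sigma_{-1}$ and $\sigma_1$ (after rescaling the moment map to have image $[0,1]$); since $[\sigma_{-1}]\cdot[\sigma_1]=0$, Proposition~\ref{proposition : if and only if} shows that $\sigma$ is of non‑hard‑Lefschetz type.

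For (4) I would show $\sigma\sim\omega$ in two steps. On $N$, the central member $\sigma_0=\tfrac{1}{\sqrt2}\omega_J$ is deformation equivalent to $\omega_I$ — connect them through the hyperk\"ahler K\"ahler forms $\omega_{(J\cos\theta+I\sin\theta)}$, $\theta\in[0,\pi/2]$, then rescale — and $\omega_I$ is deformation equivalent to $\sigma'$; hence, by Lemma~\ref{lemma : symplectic deformation} applied with one sufficiently small $\epsilon$ that works along the whole path, $\widetilde{\omega}_{\sigma_0}$ (formed on the model with that parameter $\epsilon$) is deformation equivalent to $\omega=\widetilde{\omega}_{\sigma'}$. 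It remains to connect $\sigma$, which a priori lives on the $\epsilon=1$ model, to this $\widetilde{\omega}_{\sigma_0}$: shrinking the moment interval from $[-1,1]$ to $[-s,s]$ and performing the symplectic cut at $\pm s$ produces, as $s$ ranges over $(0,1]$, a path of symplectic forms on $X$ joining $\sigma$ to $\widetilde{\omega}_{\sigma_0}$, the identification for small $s$ being supplied by the equivariant co‑isotropic embedding theorem near the central level. Concatenating gives $\sigma\sim\omega$, which is (4). I expect this last point to be the genuine obstacle: since $e$ is proportional to a K\"ahler class of $I$, no family with $[\sigma_{-1}]\cdot[\sigma_1]=0$ can pass through a K\"ahler form of $I$, so the K\"ahler form and the non‑hard‑Lefschetz form necessarily sit on different cut models and Lemma~\ref{lemma : symplectic deformation} is not directly applicable; everything else is an assembly of Propositions~\ref{proposition : sphere bundle}, \ref{proposition : if and only if}, and~\ref{proposition : Section 5, construct simple with family of symplectic form}.
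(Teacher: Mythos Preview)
Your approach is essentially the same as the paper's. The paper also takes $N$ to be a projective K3 surface (the Fermat quartic), sets $e$ equal to the hyperplane class, and produces the one‑parameter family $\sigma_t=\sigma-t\phi$ where $\phi$ is the induced Fubini--Study form and $\sigma=\rho+\bar\rho$ is the real part of a holomorphic symplectic form; the vanishing $\sigma\wedge\phi\equiv 0$ then comes from bidegree, and after normalizing $[\sigma]^2=[\phi]^2$ one has $[\sigma_{-1}]\cdot[\sigma_1]=0$. Your hyperk\"ahler implementation with $\omega_I,\omega_J$ is the same picture in different language: with respect to $I$, the form $\omega_J$ is (up to scale) the real part of the holomorphic symplectic form $\omega_J+i\omega_K$, and $\omega_I$ plays the role of $\phi$; the identity $\omega_I\wedge\omega_J=0$ is the hyperk\"ahler counterpart of the bidegree argument. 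Items (1)--(3) are then assembled exactly as in the paper, via Propositions~\ref{proposition : sphere bundle}, \ref{proposition : if and only if}, and~\ref{proposition : Section 5, construct simple with family of symplectic form}.

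On item (4) you are in fact more careful than the paper. The paper simply writes that $\phi\sim\sigma$ on $N$ via the convex path, invokes Lemma~\ref{lemma : symplectic deformation} to get $\widetilde\omega_{\sigma}\sim\widetilde\omega_{\phi}$, and then Proposition~\ref{proposition : sphere bundle} to reach a K\"ahler form $\widetilde\omega_{\phi'}$; it does not explicitly bridge the $\epsilon=1$ model (where the non‑hard‑Lefschetz form lives) to the small‑$\epsilon$ model (where Lemma~\ref{lemma : symplectic deformation} and Proposition~\ref{proposition : sphere bundle} are applied). Your observation that these two models must be connected, and your fix of varying the cut levels $\pm s$ with $s\in(0,1]$, is the natural way to make this step rigorous; so the ``genuine obstacle'' you flag is not an obstruction but a missing sentence, and your proposed remedy is correct.
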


\begin{proof}

    We will construct $(X,\omega,J)$ with a projective $K3$-surface as follows.
    Consider a Fermat quartic $$N = \{ [z_0,z_1,z_2,z_3] \in \CP^3 ~| ~z_0^4 + z_1^4 + z_2^4 + z_3^4 = 0 \}.$$
    Obviously, $N$ is a projective $K3$-surface and there is a K\"{a}hler form $\phi \in \Omega^{1,1}(N)$ induced by the Fubini-Study form $\omega_{FS}$ on $\CP^3$ with $[\omega_{FS}] \in H^2(\CP^3,\Z)$. We denote by $e = [\phi] \in H^2(N,\Z)$ the K\"{a}hler class with respect to $\phi$.
    Since $N$ admits a hyperK\"{a}hler structure, there exists a holomorphic symplectic form $\rho \in \Omega^{2,0}$.
    Let $\sigma = \rho + \bar{\rho}$ be a real closed two form lying on $\Omega^{2,0} \oplus \Omega^{0,2}$.
    Since $\sigma^2 = 2\rho\wedge\bar{\rho} > 0$ everywhere on $N$,
    $\sigma$ is a symplectic form on $N$ and $\sigma \wedge \phi \in \Omega^{3,1} \oplus \Omega^{1,3}$ must vanish on $N$.

    Now, we will construct a family of symplectic forms $\{\sigma_t\}_{\-1 \leq t \leq 1}$ on $N$.
    By scaling $\sigma$, we may assume that $[\sigma]^2 = [\phi]^2 \in H^4(N,\Z)$.
    Let $$\sigma_t = \sigma - t \phi$$ for $-1 \leq t \leq 1$.
    Since $\sigma \wedge \phi \equiv 0$, we have $\sigma_t^2 = \sigma^2 + t^2\phi^2 > 0$ everywhere so that $\{ \sigma_t \}_{-1 \leq t \leq 1}$ is a family of symplectic forms such that $[\sigma_t] - [\sigma_s] = (s-t)[\phi] = (s-t)e$ for every $t,s \in [-1,1]$.
    Hence for $\delta > 0$, the corresponding manifold $(\widetilde{M}(N,e,1+\delta)_{-1,1}, \widetilde{\sigma})$ does not satisfy the hard Lefschetz property by Proposition \ref{proposition : if and only if}.

    From now on, let $X = \widetilde{M}(N,e,1+\delta)_{-1,1}$. By Proposition \ref{proposition : sphere bundle}, $X$ is diffeomorphic to
    $\mathbb{P}(\xi \oplus \underline{\C})$ where $\xi$ is a holomorphic line bundle over $N$ whose first Chern class is $e$.
    Since $e$ is chosen to be of type $(1,1)$ with respect to the complex structure on the $K3$-surface $N$, $X$ admits a K\"{a}hler structure by Proposition \ref{proposition : sphere bundle} again. To sum up, we constructed a compact manifold $X$ which admits a K\"{a}hler structure, and a symplectic form $\widetilde{\omega}_{\sigma}$ of non-hard Lefschetz type. Hence we proved (3) in Theorem \ref{theorem : main}.

    It remains to show that $X$ satisfies the conditions (1),(2), and (4) in Theorem \ref{theorem : main}.
    Since $N$ is simply connected and $X$ is a sphere bundle over $N$, (1) follows from the long exact sequence of homotopy groups of a fiber bundle.
    Also, (2) follows from the Leray-Serre spectral sequence by using the vanishing of cohomology of both $N$ and $S^2$ in even degrees.
    For (4), note that $\phi$ is deformation equivalent to $\sigma$ via the path of symplectic forms
    $ \{ t\sigma + (1-t)\phi \}_{0 \leq t \leq 1} $ since $\sigma \wedge \phi$ is identically zero on $N$.
    Hence $\widetilde{\omega}_{\sigma}$ is deformation equivalent to $\widetilde{\omega}_{\phi}$ by Lemma \ref{lemma : symplectic deformation}.
    Also, $\widetilde{\omega}_{\phi}$ is deformation equivalent to some K\"{a}hler form $\widetilde{\omega}_{\phi'}$ on $X$ for certain K\"{a}hler form $\phi'$ on $N$ by Proposition \ref{proposition : sphere bundle}. This completes the proof.

\end{proof}


\end{document}